\theoremstyle{plain}
\newtheorem{theorem}[equation]{Theorem}
\newtheorem{proposition}[equation]{Proposition}
\newtheorem{lemma}[equation]{Lemma}
\newtheorem{corollary}[equation]{Corollary}
\newtheorem{definition}[equation]{Definition}
\theoremstyle{remark}
\newtheorem{remark}[equation]{Remark}
\numberwithin{equation}{section}
\newtheorem{example}[equation]{Example}
\newcommand{\dbar}{\bar \partial}
\def\norm#1{\Vert#1\Vert}
\newcommand{\cb}{{\mathcal B}}
\newcommand{\co}{{\mathcal O}}
\newcommand{\cs}{{\mathcal S}}
\newcommand{\sB}{{\mathscr B}}
\newcommand{\sC}{{\mathscr C}}
\newcommand{\sd}{{\mathscr D}}
\newcommand{\ve}{\varepsilon}
\newcommand{\C}{{\mathbb C}}
\newcommand{\D}{{\mathbb D}}
\newcommand{\Z}{{\mathbb Z}}
\begin{document}

\title[$\dbar$ equation]{Product domains, Multi-Cauchy transforms, \\ and the $\dbar$ equation}
\author{L. Chen \& J. D. McNeal}
\subjclass[2010]{32W05, 32A26}
\begin{abstract} Solution operators for the equation $\dbar u=f$ are constructed on general product domains in $\C^n$. When the factors are one-dimensional, the operator is a simple integral operator: it 
involves specific derivatives 
of $f$ integrated against iterated Cauchy kernels. For higher dimensional factors, the solution is constructed by solving sub-$\dbar$ equations  with modified data on the factors. Estimates of the operators in several norms are proved.
 \end{abstract}
\address{Department of Mathematics, \newline The Ohio State University, Columbus, Ohio, USA}
\email{chenliwei@wustl.edu}
\address{Department of Mathematics, \newline The Ohio State University, Columbus, Ohio, USA}
\email{mcneal@math.ohio-state.edu}

\maketitle 


\section*{Introduction}\label{S:intro}

In one complex variable, the Cauchy-Riemann equation can always be solved by explicit integral formulas. In particular if $D\subset\C^1$ is a domain with piecewise smooth boundary $bD$ and $f\in C(D)$,
then
\begin{equation}\label{I:cauchy}
v(z)= -\frac 1{2\pi i}\int_D\frac{f(\zeta)}{\zeta -z}d\bar\zeta\wedge d\zeta
\end{equation}
satisfies $\frac{\partial v}{\partial\bar z}(z)=f(z)$ for $z\in D$\footnote{Regularity on $f$ and $bD$ can be relaxed.}. This is abbreviated $\dbar v=fd\bar z$  and called the $\dbar$-equation subsequently. 
The elementary theory of holomorphic functions in one variable can be based on \eqref{I:cauchy},  see \cite[Chapter 1]{hormander_scv_book}, \cite{nar_book}, \cite[Chapter 3]{berstein_gay}, rather than
the usual basis of the Cauchy integral formula. Deeper results also follow from the explicit form of  \eqref{I:cauchy}, e.g., a proof of the Corona theorem \cite{gamelin} and interpolation theorems \cite{jones83}. 

In several variables, there is no analogous universal solution operator for $\dbar$. An initial difficulty is that the $\dbar$-equations are over-determined in $\C^n$, $n>1$: solving $\dbar u=f$,  for $f$ a $(0,1)$-form, requires restricting to forms satisfying $\dbar f=0$. A second difficulty is that integral formulas solving $\dbar$ only exist on special classes of domains $\Omega\subset\C^n$. When they exist, the kernels of these integral operators depend on the geometry  of 
$b\Omega$ and are complicated. Domain dependence of the kernels means showing boundedness of the associated operators on normed spaces  (e.g. $L^p$ and $\Lambda^\alpha$)  requires individual analysis. 

The purpose of this paper is to establish comparatively simple formulas solving $\dbar$ on arbitrary product domains in $\C^n$. We also show these operators are bounded on modifications of $L^p$ and H\" older spaces and standard $L^2$ Sobolev spaces; when one of the factors has dimension $>1$, commutativity assumptions are needed to establish boundedness. The formulas are based on
\eqref{I:cauchy} when the factors are one-dimensional. For higher dimensional factors, $\Omega=D_1\times\dots\times D_k$ with $D_j\subset\C^{n_j}$, the formulas for $\dbar u=f$ on $\Omega$ involve solutions to sub-$\dbar$ equations
on the $D_j$, for data involving components of $f$ and its derivatives.

For products with one-dimensional factors, $D=D_1\times\dots\times D_n$ with $D_j\subset\C$, the solution operator is inspired by our previous Proposition 2.2 in \cite{CheMcN18}. Let $f=\sum_{j=1}^n f_jd\bar z_j$ satisfy $\dbar f=0$ on $D$. If $J=\{j_1,\dots,j_l\}\subset\{1,\dots ,n\}$, set 
\begin{equation*}
f_{J}=\frac{\partial^{l-1}f_{j_1}}{\partial\bar\zeta_{j_2}\cdots\partial\bar\zeta_{j_l}}\quad\text{ and }\quad f^J= f, \text{ but with the variables } z_{j_1},\dots z_{j_l} \text{ fixed}. 
\end{equation*}
Let $\bm{C}^J$ denote the partial solid Cauchy transform, see \eqref{E:solidmultiCT}. Define 
\begin{equation}\label{D:Tintro}
T(f)=-\sum_{\emptyset\neq I\subset\{1,\dots,n\}}\bm{C}^{I}(f_{I}^{I^c}),
\end{equation}
the sum taken over all non-empty subsets and $I^c=\{1,\dots ,n\}\setminus I$. The central result is
\begin{theorem}\label{T:intro} 
If $f\in\cb$ (see Definition \ref{Banach}) then $\dbar \left(Tf\right)=f$ weakly.
\end{theorem}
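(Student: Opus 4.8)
The plan is to reduce to a scalar, componentwise, distributional statement and then exploit a telescoping cancellation. Since $Tf$ is scalar-valued, $\dbar(Tf)=f$ weakly is equivalent to $\partial(Tf)/\partial\bar z_k=f_k$ as distributions on $D$ for each $k\in\{1,\dots,n\}$; I abbreviate $\bm{C}^I(f_I^{I^c})$ by $\bm{C}^I(f_I)$, it being understood that $\bm{C}^I$ is the iterated solid Cauchy transform in the variables indexed by $I$ (so the frozen variables are automatically those of $I^c$). Three facts drive the argument. (i) The one-variable solid Cauchy transform of \eqref{E:solidmultiCT} reproduces, $\partial/\partial\bar z_j\,\bm{C}^{\{j\}}g=-g$ weakly, for $g$ in the integrability class carried by $\cb$; this is the one-dimensional content of \eqref{I:cauchy}, applied in the variable $z_j$ with the sign normalization of \eqref{E:solidmultiCT}. (ii) For disjoint index sets the transforms commute and compose, $\bm{C}^I\circ\bm{C}^J=\bm{C}^{I\cup J}$, by Fubini, and $\partial/\partial\bar z_k\,\bm{C}^I=\bm{C}^I\,\partial/\partial\bar z_k$ when $k\notin I$ (differentiation under the integral, $z_k$ being merely a parameter there). (iii) The datum $f_I$ is independent of which element of $I$ is taken as the distinguished index $j_1$, this being exactly $\dbar f=0$: the identities $\partial f_j/\partial\bar z_{j'}=\partial f_{j'}/\partial\bar z_j$ let one slide $j_1$ through the iterated $\bar\partial$-derivative defining $f_I$; in particular $f_{\{k\}}=f_k$, and for $k\notin I$ one has $\partial f_I/\partial\bar z_k=f_{I\cup\{k\}}$.

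Granting these, I record two transfer identities. If $k\in I$, writing $\bm{C}^I=\bm{C}^{\{k\}}\circ\bm{C}^{I\setminus\{k\}}$ and using that $\bm{C}^{I\setminus\{k\}}$ does not involve $z_k$, fact (i) gives $\partial/\partial\bar z_k\,\bm{C}^I(f_I)=-\,\bm{C}^{I\setminus\{k\}}(f_I)$, degenerating for $I=\{k\}$ to $-f_k$ since $\bm{C}^{\emptyset}$ is the identity. If $k\notin I$, facts (ii) and (iii) give $\partial/\partial\bar z_k\,\bm{C}^I(f_I)=\bm{C}^I\big(\partial f_I/\partial\bar z_k\big)=\bm{C}^I\big(f_{I\cup\{k\}}\big)$. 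Substituting into $\partial(Tf)/\partial\bar z_k=-\sum_{\emptyset\neq I}\partial/\partial\bar z_k\,\bm{C}^I(f_I)$ and splitting by whether $k\in I$: the sets $I\ni k$, reindexed by $K=I\setminus\{k\}$ over all subsets of $\{1,\dots,n\}\setminus\{k\}$, contribute $+f_k$ (the term $K=\emptyset$) together with $\sum_{\emptyset\neq K\subseteq\{1,\dots,n\}\setminus\{k\}}\bm{C}^K\big(f_{K\cup\{k\}}\big)$, while the sets $\emptyset\neq I\not\ni k$ contribute $-\sum_{\emptyset\neq I\subseteq\{1,\dots,n\}\setminus\{k\}}\bm{C}^I\big(f_{I\cup\{k\}}\big)$. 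The last two families run over the same collection of sets with identical summands, so they cancel and $\partial(Tf)/\partial\bar z_k=f_k$ survives; letting $k$ vary completes the proof.

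The combinatorial bookkeeping above is routine; the real work is making the two transfer identities rigorous rather than formal. One must justify differentiation under the integral in fact (ii), and, more delicately, the weak reproducing identity of fact (i) for $g$ only in the integrability class of $\cb$ (defined a.e., not continuous), as well as the legitimacy of commuting the higher mixed $\bar\partial$-derivatives appearing in $f_I$ — themselves only weak derivatives of components of $f$ — past the iterated Cauchy integrations. The natural route is to pair $\partial(Tf)/\partial\bar z_k$ with a test function $\psi\in C_c^\infty(D)$, use Fubini to isolate the $z_k$-integration, and reduce each piece to the one-dimensional weak identity underlying \eqref{I:cauchy}; Definition \ref{Banach} is designed precisely to supply the integrability that makes $Tf$ well defined and all of these interchanges licit.
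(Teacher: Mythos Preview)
Your telescoping computation is correct and the transfer identities you isolate are the right ones; the distributional justifications you flag (the weak one-variable reproducing property of the solid Cauchy kernel, and Fubini-based commutation of $\bm{C}^I$ with $\partial/\partial\bar z_k$ for $k\notin I$) are standard and are exactly supported by the integrability built into $\cb$. So the argument goes through.

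The paper, however, does not argue this way. Its proof of Theorem~\ref{main1} is an approximation argument: exhaust $D$ by sub-products $D^\delta$, mollify $f$ to obtain $\dbar$-closed $f^\ve\in C^\infty_{(0,1)}(\overline{D^\delta})$, invoke H\"ormander's $L^2$ existence on the pseudoconvex $D^{\delta/2}$ together with interior regularity to produce a smooth solution $u^\ve$, and then apply the already-proved smooth-case Proposition~\ref{integralformula} to conclude $\dbar\,T^\delta(f^\ve)=f^\ve$ on $D^\delta$; finally $\ve\to 0$ and $\delta\to 0$ using the $L^1$ boundedness of Proposition~\ref{P:bounded}. Thus the paper leverages the representation identity of Lemma~\ref{indCauchy} (via Proposition~\ref{integralformula}) and pushes all the analytic work into the limit, at the cost of importing H\"ormander's theorem and interior elliptic regularity. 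Your route is more elementary and self-contained---no external $\dbar$-existence result is needed, and the reason $T$ solves $\dbar$ is made transparent by the cancellation---but it trades that for the careful verification of the weak transfer identities in the low-regularity class $\cb$, which the paper's approximation device sidesteps entirely.
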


This is proved as Theorem \ref{main1} below. It is remarkable the simple \eqref{D:Tintro} solves $\dbar$ on $D$; its simple derivation in Section 2 seems noteworthy as well. Known integral formulas solving $\dbar$ on strongly pseudoconvex domains are much more intricate -- see especially Section 2 in \cite{RangeSiu}, or the various formulas in \cite{Range86, HenkinLeiterer}. 

On the other hand, derivatives of the data form $f$ occur in  \eqref{D:Tintro}. When compared to previously studied cases this appears restrictive. An initial aim of the paper is to show that allowing derivatives leads
to short, symmetric expressions like  \eqref{D:Tintro}. A deeper aim concerns estimates. $L^p$ estimates on  $T$ are proved in Section \ref{S:Lp} and H\" older estimates, of a non-standard kind, are proved in Section \ref{S:Holder}.
One result is

\begin{theorem}\label{C:LpIntro}
Let $p\in[1,\infty]$. Suppose $f_{I}\in L^p(D)$ for all $I\neq\emptyset$ and $\dbar f=0$. Then 
\[
\|T(f)\|_{L^p(D)}\le C\sum_{I\neq\emptyset}\|f_{I}\|_{L^p(D)}.
\]
\end{theorem}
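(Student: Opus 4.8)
The plan is to reduce the $L^p$ estimate on $T$ to $L^p$ bounds for each of the building blocks $\bm{C}^I(f_I^{I^c})$ appearing in \eqref{D:Tintro}, and then to estimate each such block by iterating the one-variable solid Cauchy transform in the variables indexed by $I$. Since $T(f) = -\sum_{\emptyset\neq I\subset\{1,\dots,n\}}\bm{C}^I(f_I^{I^c})$ is a finite sum (over the $2^n-1$ nonempty subsets $I$), the triangle inequality gives $\|T(f)\|_{L^p(D)}\le \sum_{\emptyset\neq I}\|\bm{C}^I(f_I^{I^c})\|_{L^p(D)}$, so it suffices to show $\|\bm{C}^I(f_I^{I^c})\|_{L^p(D)}\le C\|f_I\|_{L^p(D)}$ for each fixed $I$, with a constant $C$ depending only on $D$ and $|I|$. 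Here $f_I = \partial^{|I|-1}f_{j_1}/\partial\bar\zeta_{j_2}\cdots\partial\bar\zeta_{j_l}$ is the single function whose $L^p$ norm appears on the right-hand side, and $\bm{C}^I$ is the composition of the one-dimensional solid Cauchy transforms in the variables $z_{j}$, $j\in I$, acting on $f_I$ with the remaining variables $z_j$, $j\in I^c$, held as parameters.

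The core estimate is the classical fact that the one-variable solid Cauchy transform $\bm{C}$ on a bounded domain $D_j\subset\C$ is bounded on $L^p(D_j)$ for every $p\in[1,\infty]$: writing $\bm{C}g(z) = -\frac{1}{2\pi i}\int_{D_j} \frac{g(\zeta)}{\zeta-z}\,d\bar\zeta\wedge d\zeta$, the kernel $1/|\zeta-z|$ is integrable uniformly in $z$ over the bounded set $D_j$ (and likewise integrable in $z$ uniformly in $\zeta$), so Young's inequality / Schur's test gives $\|\bm{C}g\|_{L^p(D_j)}\le C(D_j)\|g\|_{L^p(D_j)}$. I would then upgrade this to the partial transform on the product: for a function $h(z_1,\dots,z_n)$, applying $\bm{C}$ in the single slot $z_{j}$ while freezing the others and using Minkowski's integral inequality (for $1\le p<\infty$) — or a direct sup estimate when $p=\infty$ — yields $\|\bm{C}^{\{j\}}h\|_{L^p(D)}\le C\|h\|_{L^p(D)}$ with the same one-variable constant, because the extra variables only contribute a factor of the (finite) measures of the remaining factors, or rather are simply carried along by Fubini. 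Iterating this $|I|$ times over the slots $j\in I$ gives $\|\bm{C}^I(f_I^{I^c})\|_{L^p(D)}\le C^{|I|}\|f_I\|_{L^p(D)}$. Summing over $I$ and absorbing all constants into a single $C=C(D,n,p)$ finishes the proof.

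The main technical point to be careful about — and the step I would expect to require the most attention — is the interchange of the order of integration (Fubini/Tonelli) and of norm and integral (Minkowski's inequality) in the iteration, together with making sure the hypothesis "$f_I\in L^p(D)$ for all $I\neq\emptyset$" is exactly what is needed so that each $f_I$ is a bona fide $L^p$ function and the derivatives defining $f_I$ are understood in the appropriate (weak) sense already guaranteed by $f\in\cb$; the constraint $\dbar f=0$ is used (via Theorem \ref{T:intro}) only to know that $T(f)$ actually solves the equation, not for the norm bound itself, which is purely a mapping property of the operators $\bm{C}^I$. The endpoint cases $p=1$ and $p=\infty$ need the Schur-test form of the kernel bound rather than interpolation, but this is routine given boundedness of $D$. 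No cancellation or pseudoconvexity is exploited — the estimate is a soft consequence of the explicit integral form of \eqref{D:Tintro}.
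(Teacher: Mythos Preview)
Your proposal is correct and follows essentially the same route as the paper. In the paper this statement appears as Corollary~\ref{C:Lp}, obtained by setting $r=p$ in the more general $L^r\to L^p$ estimate Theorem~\ref{T:Lp}; the proof of that theorem is precisely your argument---triangle inequality on the finite sum, then iterate the one-variable $L^p$ bound for the solid Cauchy transform through the slots in $I$ using Fubini and Minkowski's integral inequality. Your direct use of Schur's test (uniform integrability of $1/|\zeta-z|$ in each variable on a bounded planar domain) for the one-variable $L^p\to L^p$ bound is in fact slightly cleaner than specializing the paper's Lemma~\ref{Linfty1dim}, but the mechanism is identical.
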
 
\noindent The bounds obtained are natural. The proofs of Propositions \ref{T:Lp}, \ref{highdimexistence}, \ref{LpforhighT}, and \ref{main3} show how an accumulation of estimates on $f_I$ are needed to control $Tf$. 
Subsection \ref{SS:example} also shows that $f\in L^p_{(0,1)}(D)$ alone is not sufficient to conclude that $Tf\in L^p(D)$, for certain $p$. Applying Stokes' theorem to  \eqref{D:Tintro} (multiple times) leads to a re-expression of $T$ without derivatives on $f$, but at the
cost of many partial boundary integrals on the lower dimensional facets of $bD$; see Remark 2.3 in  \cite{CheMcN18}.

An additional remark about derivatives in  \eqref{D:Tintro}: note that only barred derivatives occur in $f_I$ and only certain barred derivatives land on individual components of $f$. In particular,
$\sum_{I\neq\emptyset}\|f_{I}\|_{L^p(D)}$ is weaker than the full Sobolev norm $\|f\|_{W^{n-1, p}_{(0,1)}(D)}$.

Turning to higher dimensional factors, $D=D_1\times\dots\times D_k$ with $D_j\subset\C^{n_j}$, the strategy is to construct a solution operator $T$ on $D$ from given solution operators $T_j$ on the factors $D_j$.
This requires that the factor decomposition of $D$ be analytically respected. The first step is to decompose $\dbar=\dbar_1+\dots +\dbar_k$, where $\dbar_j$ is the Cauchy-Riemann operator on $D_j$, and extend 
$f_J$ to separately account for derivatives in the variables associated to $D_j$; see Definition \ref{fI}. Next, interactions between the operators $T_j$ and derivatives in the {\it complementary factors} to $D_j$ arise. 
Generically denote barred derivatives associated to $D_1,\dots,D_{j-1},D_{j+1},\dots,D_k$ by $\partial/\partial\bm{\bar z}_*$.
The following hypothesis is used.
\begin{itemize}
\item[(C)] For each $j=1,\dots , k$, there is a linear bounded operator $T_j:L^p_{(0,1)}(D_j)\to L^p(D_j)$ that solves the $\dbar_j$-equation on $D_j$ and commutes with all $\partial/\partial\bm{\bar z}_*$ .
\end{itemize}
 The main $L^p$ result for higher dimensional factors is
\begin{theorem}\label{P:existenceIntro}
Let $D=D_1\times\cdots\times D_k$, with $D_j\subset\C^{n_j}$, and $p\in[1,\infty]$.  Assume (C) holds.
Then there is a linear operator $T$ on
\[
\sB^p:=\{f\,\dbar\text{-closed}\,|\,f_I\in L^p(D)\text{ for all }I\neq0\}
\]
satisfying $\dbar T(f)=f$. Furthermore the estimate
\[
\|T(f)\|_{L^p(D)}\le C\sum_{|I|\neq0}\|f_I\|_{L^p(D)}
\]
holds, for $C>0$ independent of $f$.
\end{theorem}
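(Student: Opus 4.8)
The plan is to mimic the construction \eqref{D:Tintro}, replacing the iterated partial solid Cauchy transforms $\bm{C}^I$ by iterated compositions of the given factor operators $T_j$. First I would fix the decomposition $\dbar=\dbar_1+\dots+\dbar_k$ dictated by the product structure, split $f=\sum_{j=1}^k f^{(j)}$ into its $\dbar_j$--parts, and, for a nonempty $I=\{i_1<\dots<i_l\}\subset\{1,\dots,k\}$, form the modified datum $f_I$ of Definition \ref{fI} --- a form of bidegree $(0,1)$ in each of the factors $D_{i_1},\dots,D_{i_l}$, obtained from a component of $f$ by the prescribed barred derivatives in the variables of $D_{i_2},\dots,D_{i_l}$. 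Setting $T^I:=T_{i_1}\circ\dots\circ T_{i_l}$, with $T_{i_m}$ applied to the $(0,1)$--in--$D_{i_m}$ part and the complementary variables held fixed, define
\[
T(f):=-\sum_{\emptyset\neq I\subset\{1,\dots,k\}}T^I\bigl(f_I^{\,I^c}\bigr).
\]
This is linear, and since $f_I\in L^p(D)$ for every $I$ by the definition of $\sB^p$ and each $T_j$ maps $L^p_{(0,1)}(D_j)$ to $L^p(D_j)$, every summand is a well--defined element of $L^p(D)$.

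Second, to see $\dbar T(f)=f$ weakly I would run the same telescoping computation as in the proof of Theorem \ref{main1} (the case $k=1$). Apply $\dbar_j$ to $T(f)$. Hypothesis (C) supplies two facts: $\dbar_j T_j=\mathrm{id}$ on $\dbar_j$--closed $(0,1)$--forms of $D_j$, and $\dbar_i T_j=T_j\dbar_i$ for $i\neq j$ (because $\dbar_i$ is built entirely from barred derivatives complementary to $D_j$). Hence on the terms with $j\in I$ the relevant inner copy of $T_j$ is killed by $\dbar_j$ and yields the leading component, while on the terms with $j\notin I$ the operator $\dbar_j$ slides through every $T_i$, $i\in I$, landing as a barred $D_j$--derivative on $f_I$, which the compatibility conditions $\dbar f=0$ rewrite as a derivative of a component with a different index set. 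These two families cancel in pairs --- $I$ against $I\cup\{j\}$ --- leaving precisely $\dbar_j T(f)=f^{(j)}$; summing in $j$ gives the equation. The cleanest bookkeeping is an induction on $k$: write $D=D_1\times(D_2\times\dots\times D_k)$, use the operator built on the second factor, and verify along the way that the constructed operator again commutes with barred derivatives in the $D_1$--variables, so that property (C) propagates through the induction.

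Third, the norm bound is soft. Each $T_j\colon L^p(D_j)\to L^p(D_j)$ is bounded, so by Fubini and Minkowski's integral inequality (for $p<\infty$; by taking suprema over the product when $p=\infty$) its ``partial'' extension to $D$, acting in the $D_j$--slice with the remaining variables as parameters, is bounded on $L^p(D)$ with the same constant. Composing the $|I|$ factors gives $\|T^I(f_I^{\,I^c})\|_{L^p(D)}\le C_I\|f_I\|_{L^p(D)}$, and summing over the finitely many nonempty $I$ yields $\|T(f)\|_{L^p(D)}\le C\sum_{|I|\neq0}\|f_I\|_{L^p(D)}$ with $C$ depending only on $k$ and the norms of the $T_j$, hence independent of $f$.

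The step I expect to be the real obstacle is the identity $\dbar T(f)=f$ in paragraph two, for two reasons. First, one must confirm that the datum actually handed to each innermost operator $T_{i_1}$ is $\dbar_{i_1}$--closed in the $D_{i_1}$--variables --- otherwise $T_{i_1}$ need not invert $\dbar_{i_1}$ on it; this is exactly where the full strength of $\dbar f=0$, together with the precise shape of Definition \ref{fI}, is consumed. Second, with higher--dimensional factors the pairwise cancellation has to be organized when the barred derivatives involved are genuinely multidimensional and several components of $f_I$ can be differentiated, so the combinatorics is heavier than in the scalar one--variable case; keeping signs and index--set bookkeeping straight, rather than any analytic difficulty, is the crux. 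Everything else --- well--definedness and the $L^p$ estimate --- is routine once this identity is in hand.
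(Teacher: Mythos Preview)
Your approach differs from the paper's but is essentially sound once two issues are fixed. The paper proves this result (Propositions \ref{highdimexistence} and \ref{LpforhighT}) via a \emph{recursive} construction rather than a closed-form sum: set $g_1=f$, $v^j=T_j(\pi_j(g_j))$, $g_{j+1}=g_j-\dbar v^j$, and $T(f)=v^1+\cdots+v^k$. Then $\dbar T(f)=f$ follows from a short telescoping once one shows $\pi_1(g_j)=\dots=\pi_{j-1}(g_j)=0$ for each $j$ (Lemma \ref{L:vanishing}), which is exactly where (C) enters. Your direct sum over subsets is the unwinding of this recursion --- the paper observes this for $k=2$ in \eqref{E:alt2} --- and your inductive alternative at the end of paragraph two is essentially the paper's argument reorganized. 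The recursion has the advantage that $\dbar$-closedness of every intermediate datum is automatic (each $g_j$ satisfies $\dbar g_j=0$ because $g_j=g_{j-1}-\dbar v^{j-1}$), so the ``real obstacle'' you identify never has to be checked term by term.

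Two correctable issues with your closed-form version. First, the sign is wrong: in the one-dimensional-factor case the solution operator on each factor is $T_j=-\bm{C}^{\{j\}}$, so $\bm{C}^I=(-1)^{|I|}T^I$, and \eqref{D:Tintro} translates to $T(f)=\sum_I(-1)^{|I|-1}T^I(f_I)$ rather than $-\sum_I T^I(f_I)$; with your sign, already for $k=2$ the cross terms in $\dbar_j T(f)$ add instead of cancelling. Second, what you call $f_I$ is not the object of Definition \ref{fI}: there $f_I$ denotes a \emph{class of scalar functions} (one for each choice of the generic $*$ indices), whereas you need a single multi-$(0,1)$-form $\dbar_{i_2}\cdots\dbar_{i_l}\pi_{i_1}(f)$ on which the composition $T^I$ can act. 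These carry the same $L^p$ content, so the estimate in your third paragraph goes through unchanged, but the identification should be made explicit.
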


This is contained in Propositions \ref{highdimexistence} and \ref{LpforhighT}. The proof of Theorem \ref{P:existenceIntro} involves solving $\dbar_j$ equations on the factors $D_j$, where the $\dbar_j$ data comes from
derivatives of $f$ on $D_1,\dots,D_{j-1},D_{j+1},\dots,D_k$. In particular, the $\dbar_j$ datum are not merely the restriction of $f$ to the individual factors $D_j$. See Remark \ref{R:explainC}. The same basic method yields
existence and regularity results on $L^2$ Sobolev spaces and non-standard H\" older spaces, as shown in Sections \ref{smoothregularity}-\ref{S:Holder}.

Hypothesis (C) is not very restrictive. For instance if the operators $T_j$ on $D_j$ are integral solution operators, (C) holds; see Lemma \ref{L:IntegralsCommute}. Thus assuming (C) was
unnecessary when the factors are one-dimensional. More generally,  when each $D_j$ is strongly pseudoconvex it is unnecessary to assume (C):  the $L^p$ bounded solutions in \cite{Kerzman71, Lieb70, Ovrelid} are integral operators on $D_j$.  Hypothesis (C) also holds when natural projection operators on the factors $D_j$ exist. Let $A^p(D_1)= L^p(D_1)\cap \co\left(D_1\right)$,
$\co(D_1)$ denoting holomorphic functions on $D_1$. 

\begin{proposition} Let $D=D_1\times\cdots\times D_k$, $p\in (1,\infty)$, and suppose $T:L_{(0,1)}^p(D_1)\to L^p(D_1)$ solves the $\dbar_1$-equation on $D_1$. 
Assume there exists $P:L^p(D_1)\to A^p(D_1)$ which preserves $A^p(D_1)$. 

Then $S=T-P\circ T$ satisfies $\big[\partial/\partial\bm{\bar z}_*,S\big]=0$, where $\partial/\partial\bm{\bar z}_*$ denotes an arbitrary barred derivative in the variables on
$D_2\times\dots\times D_k$.
\end{proposition}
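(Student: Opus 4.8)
The plan is to show that $S = T - P\circ T$ solves the $\dbar_1$-equation (which is immediate since $P\circ T$ lands in $A^p(D_1)\subset\Null(\dbar_1)$) and, more importantly, that $S$ commutes with each barred derivative $\partial/\partial\bm{\bar z}_*$ in the complementary variables on $D_2\times\cdots\times D_k$. The key structural observation is that for a fixed complementary variable, say $\zeta_*$ one of the $\bar z$-coordinates on $D_2\times\cdots\times D_k$, the operator $\partial/\partial\bm{\bar z}_*$ acts only in directions transverse to $D_1$; so if we think of $u = u(z^{(1)}, \zeta)$ as a function on $D_1$ depending on a parameter $\zeta \in D_2\times\cdots\times D_k$, then $\partial u/\partial\bm{\bar z}_*$ is just the $\zeta_*$-derivative of the $D_1$-valued map $\zeta \mapsto u(\cdot,\zeta)$. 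The claim $[\partial/\partial\bm{\bar z}_*, S]=0$ is then a statement about interchanging a parameter-derivative with the (parameter-independent) operator $S$ acting on $D_1$.

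First I would reduce the commutator identity to a difference-quotient computation: writing $D_h u(z^{(1)},\zeta) = \frac{1}{h}\big(u(z^{(1)},\zeta+h e_*) - u(z^{(1)},\zeta)\big)$ for the complex translation in the $\zeta_*$ direction (with $\bar h$ appearing appropriately so the limit is $\partial/\partial\bm{\bar z}_*$), one notes that $D_h$ commutes with $T$ and with $P$ trivially, because $T$ and $P$ are operators on $D_1$ acting on the $z^{(1)}$-slice and are applied identically for each value of the parameter $\zeta$: $T(u(\cdot,\zeta+he_*))(z^{(1)}) - T(u(\cdot,\zeta))(z^{(1)}) = T\big(u(\cdot,\zeta+he_*)-u(\cdot,\zeta)\big)(z^{(1)})$ by linearity, and similarly for $P$. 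Hence $D_h S = S D_h$ for each $h$. The commutator $[\partial/\partial\bm{\bar z}_*, S]=0$ then follows by letting $h\to 0$, provided one can pass the limit through $S$ — which is exactly where boundedness of $T$ (and of $P$) on $L^p(D_1)$ enters: if $D_h u \to \partial u/\partial\bm{\bar z}_*$ in $L^p(D_1)$ for a.e. $\zeta$ (or in $L^p(D)$), then $S D_h u = D_h S u \to S(\partial u/\partial\bm{\bar z}_*)$ and also $\to \partial(Su)/\partial\bm{\bar z}_*$ in the weak/distributional sense, giving the two sides agree.

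The main obstacle is making the limiting argument rigorous at the level of regularity available: $Su$ need not be classically differentiable in $\zeta_*$, so the identity $[\partial/\partial\bm{\bar z}_*, S]u = 0$ must be interpreted distributionally on $D$, and one must justify that $\partial/\partial\bm{\bar z}_*$ applied to $Su$ equals the $L^p$-limit of $D_h(Su) = S(D_h u)$. I would handle this by testing against $\varphi \in C_c^\infty(D)$: integrating by parts in $\zeta_*$ moves $\partial/\partial\bm{\bar z}_*$ onto $\varphi$, then using Fubini to write the $D$-integral as an iterated integral over $D_1$ and $D_2\times\cdots\times D_k$, one recognizes the inner integral as a pairing of $Su(\cdot,\zeta)$ against a test function; commuting the $\zeta_*$-derivative past $S$ in this weak formulation is then a consequence of $S$ being bounded on $L^p(D_1)$ and continuity of translation in $L^p$. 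One subtlety to flag: for $p=1$ or the requirement $p\in(1,\infty)$ stated, continuity of $L^p$-translation and the density of smooth functions are used, and the case $p=\infty$ is (correctly) excluded from the hypotheses; also one should note that $\partial/\partial\bm{\bar z}_*$ "acting on" elements of $L^p(D)$ is always understood in the distributional sense, so the assertion $[\partial/\partial\bm{\bar z}_*, S]=0$ is an equality of distributions on $D$, valid a priori without assuming $Su$ has $L^p$ derivatives.
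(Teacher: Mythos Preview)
Your proposal is correct and takes a genuinely different route from the paper. You argue via difference quotients: since $T$ and $P$ act only on the $D_1$-slice and are applied identically for each value of the transverse parameter, linearity gives $D_h \circ S = S \circ D_h$ exactly; then strong $L^p$-convergence $D_h g \to \partial g/\partial\bm{\bar z}_*$ on $D_1\times K_2$ for $K_2\subset\subset D_2$ (valid for $p<\infty$ when the weak derivative lies in $L^p$), combined with boundedness of $S$, passes the identity to the limit. Notice that this argument in fact shows $[\partial/\partial\bm{\bar z}_*, T]=0$ directly --- the projection $P$ plays no essential role in your approach.

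The paper instead argues structurally. Using that $S$ solves $\dbar_1$, it computes $\dbar_1\big([\partial_\nu, S](g)\big)=0$, so $[\partial_\nu, S](g)\in A^p(D_1)$ for a.e.\ slice; then it represents $P$ as an integral operator via the mean-value property and $L^p$-duality (so $P$ commutes with $\partial_\nu$ by Fubini), observes $P\circ S = P\circ T - P\circ(P\circ T)=0$, and concludes $[\partial_\nu, S](g) = P\big([\partial_\nu, S](g)\big) = [\partial_\nu, P\circ S](g)=0$. Thus the paper leans on the $\dbar_1$-solving property of $S$ and the $A^p$-preserving property of $P$, whereas you use only linearity and $L^p$-boundedness. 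Your approach is more elementary and actually yields the stronger conclusion that $T$ itself commutes; the paper's approach avoids invoking the strong convergence of difference quotients and highlights why $S$ (rather than $T$) is singled out. One minor remark: the distributional reformulation in your third paragraph is more roundabout than necessary --- the $L^p(D_1\times K_2)$ convergence argument you sketch in the second paragraph already does the job.
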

See Lemma \ref{commutativity} for a more precise statement. Notice that $S:L_{(0,1)}^p(D_1)\to L^p(D_1)$ and $\dbar_1\left(Sf\right)=f$, so hypothesis (C) is satisfied if suitable projection operators $P$ exist on each factor $D_j$.
The Bergman projection $\bm{B}$ on $D_j$ is a natural candidate for such $P$. However, preservation of $A^p(\Omega)$ for $p\neq 2$ is not automatic; see \cite{ChaEdhMcN18} for examples of pseudoconvex $\Omega$
where $\bm{B}_\Omega$ does not preserve $A^p(\Omega)$.

In Section \ref{S:ortho} orthogonality properties of the $\dbar$ solution operators  constructed in the paper are briefly discussed.

Integral solution operators for $\dbar$ have been constructed on various classes of domains, starting with the breakthrough work of
\cite{Henkin69} and \cite{GrauertLieb} on strongly pseudoconvex domains, and shown to be bounded on various classical normed spaces.
There are many significant results in this direction, too numerous to adequately summarize here. The foundational papers \cite{GrauertLieb, Henkin69, Kerzman71, Lieb70, Ovrelid, RangeSiu},  the books \cite{Range86, HenkinLeiterer}, and the paper \cite{LanSte13} present the principal theorems. The bibliographies in \cite{HenkinLeiterer, LiebMichelbook, Range86} give references to more specialized results. The papers \cite{Fornaess86} and \cite{MichelShaw99}
are highlights of results obtained after the mid-80s.

Theorem \ref{C:LpIntro} contains the case $p=\infty$ and holds in $\C^2$. This case received special attention previously.
An integral formula solving $\dbar$ on the bidisc $\D^2$ was stated in \cite{Henkin71}. Estimates in $L^\infty$ on this operator are shown in \cite{FornaessLeeZhang}, when the data is sufficiently smooth, i.e. $f\in C^1\left(\overline{\D^2}\right)$. Specializing a complicated integral formula on polyhedra to $\D^2$,  $L^\infty$ estimates on a solution to $\dbar u=f$ were stated in \cite{HenkinSergeev}, again if $f$ is sufficiently smooth on $\overline{\D^2}$.
Some details of these estimates appear in  \cite{HenkinSergeev}, others in \cite{Jakobczak89}.
In \cite{CheMcN18}, the operator  \eqref{D:Tintro} is introduced on general product domains in $\C^2$ and the following shown: for less regular $f$ than assumed above  -- namely if only $\frac{\partial f_1}{\partial\bar z_2}\in L^\infty\left({\D^2}\right)$ where $f=f_1d\bar z_1+f_2 d\bar z_2$ -- the $L^\infty$ estimates in \cite{FornaessLeeZhang} hold.  

Nevertheless, the results in 
\cite{HenkinSergeev, FornaessLeeZhang, CheMcN18} and Theorem \ref{C:LpIntro} above leave open a question
posed in \cite{Kerzman71}: if $f\in L^\infty_{(0,1)}\left(\D^2\right)$ is weakly $\dbar$-closed, is there a $u\in L^\infty\left(\D^2\right)$ solving $\dbar u=f$?
The issue is passage from a priori $L^\infty$ estimates to genuine estimates. Within the $L^p$ scale this issue is special to $L^\infty$: a general
$g\in L^\infty$ cannot be approximated in $L^\infty$ norm by convolution with smooth bump functions while $g\in L^p$ for $p<\infty$ can be $L^p$-approximated by such convolutions. 

In a different direction, $L^2$ estimates for $\dbar$ on product domains are proved in \cite{ChakrabartiShaw11}, without first establishing integral solution operators. These estimates hold on $\D^2$ without additional assumptions on $f$ beyond $f\in L^2_{(0,1)}\left(\D^2\right)$.\footnote{After this paper was written, the authors received \cite{FassinaPan}, where
 integral formulas on product domains with one-dimensional factors are also obtained.}



\section{The basic formula; smooth data}\label{S:integral}

\subsection{One-dimensional Cauchy transforms}\label{SS:1dCT} One variable Cauchy transforms underlie our initial constructions. To focus on
the new several variable phenomena, needed results about these transforms are deferred to an Appendix, Section \ref{S:appendix}.

Let $D_0\subset\C$ be a bounded domain with  piecewise $C^1$ boundary $bD_0$. If $g\in C(bD_0)$, the {\it Cauchy transform of} $g$ is defined
\begin{equation}\label{E:CT}
\sC(g)(z)=\frac{1}{2\pi i}\int_{bD_0}\frac{g(\zeta)\,d\zeta}{\zeta-z}.
\end{equation}
Differentiation under the integral shows $\sC g\in\co(D_0)$. If $h\in C(D_0)$, the {\it solid Cauchy transform of} $h$ is defined
\begin{equation}\label{E:solidCT}
\bm{C}(h)(z)=\frac{1}{2\pi i}\int_{D_0}\frac{h(\zeta)\,d\bar\zeta\wedge d\zeta}{\zeta-z}\qquad h\in C(D_0).
\end{equation}

Smoothness of $\bm{C}(h)$ is less obvious than $\sC(g)$, since the singularity at $\zeta=z$ occurs inside the region of integration; however see Lemma \ref{L:smoothCT}.

\subsection{Products with one-dimensional factors}

Let $D=D_1\times\cdots\times D_n$ be a bounded product domain in $\C^n$, where  $D_j\subset\C$ with piecewise $C^1$ boundary for $j=1,\dots,n$. Extend \eqref{E:CT}
to $D$ as the \emph{multi-Cauchy transform}
\[
\sC_n:C(bD_1\times\cdots\times bD_n)\to\co(D),
\]
by defining
\begin{equation}\label{E:multiCT}
\sC_n(g)(z)=\frac{1}{(2\pi i)^n}\int_{bD_1\times\cdots\times bD_n}\frac{g(\zeta_1,\dots,\zeta_n)\,d\zeta_1\wedge\cdots\wedge\,d\zeta_n}{(\zeta_1-z_1)\cdots(\zeta_n-z_n)}
\end{equation}
for $g\in C(bD_1\times\cdots\times bD_n)$.

Consider the $\dbar$ equation on $D$, $\dbar u=f$,
where $f$ is a $(0,1)$-form with $\dbar f=0$. Write
$f=f_1d\bar z_1+\cdots +f_nd\bar z_n$, where each component $f_j=f_j(z_1,\dots ,z_n)$ is a function at least in $C^1(D)$.
{\it Assume}, for the remainder of this section, that there is a solution $u\in C^n(\overline{D})$ and that $f\in C^{n-1}_{(0,1)}(\overline{D})$.

Since $u\in C(\overline{D})$,  the integral giving $\sC_n u$ is well-defined; differentiating under the integral shows $\sC_n(u)\in\co(D)$. Fubini's theorem implies  $\sC_n u$ can be expressed iteratively 
\begin{equation}\label{E:fubini}
\sC_n(u)(z)=\sC_{n-1}\big(\sC(u(\cdot,z_n))\big)(z'),
\end{equation}
where $z'=(z_1,\dots,z_{n-1})$, the inner $\sC$ operator acts on the $z_n$ variable according to \eqref{E:CT}, and the outer $\sC_{n-1}$ operator acts on the $\cdot$ variables according to \eqref{E:multiCT}.
Other expressions for $\sC_n u$ also hold -- taking any of the variables $z_k$ in place of $z_n$ -- since the order of integration in $\sC_n u$ is irrelevant.

Apply Lemma \ref{L:stokes} to the inner operator $\sC$ in \eqref{E:fubini}, i.e. on the variable $z_n\in D_n$. This yields the recursive formula 
\begin{equation}
\label{recursive}
\sC_n(u)(z)=\sC_{n-1}\big(\bm{C}(u_{\bar\zeta_n})(\cdot,z_n)\big)(z')+\sC_{n-1}\big(u(\cdot,z_n)\big)(z'),
\end{equation}
where $z'=(z_1,\dots,z_{n-1})$. Other orders of this iterative expression are valid; any equivalent expression will be referred to as \eqref{recursive}.

\subsubsection{Notation for derivatives and evaluation}

To exploit \eqref{recursive}, notation involving subsets of $\{1,2,\dots,n\}$ is introduced. If $I\subset\{1,2,\dots,n\}$ with $0\le|I|=l\le n$, $I$ can be expressed uniquely as $I=\{i_1,\dots,i_l\}$ with $i_1 <\dots <i_l$. 
In all definitions below, the presentation of $I$ is irrelevant. When ``$I\subset\{1,2,\dots,n\}$'' appears as a summation index, the sum is taken only {\it once} for each $I$, e.g. for $I$ expressed in increasing form.

Subscripts will denote partial derivatives
 and superscripts will denote partial evaluation. This will be done differently for functions and $(0,1)$-forms, to expose the basic new relations on the solution operator. 
 
If $u\in C^n(\overline{D})$ and $\emptyset\neq I\subset\{1,2,\dots,n\}$, define
\begin{equation}\label{E:subI}
u_{I}=\frac{\partial^{l}u}{\partial\bar\zeta_{i_1}\cdots\partial\bar\zeta_{i_l}}
\end{equation}
and
\begin{equation}\label{E:superI}
u^{I}=u(\cdots,z_{i_1},\cdots,z_{i_2},\cdots,z_{i_l},\cdots)\qquad\text{i.e., evaluation in the slots } z_{i_1},\dots,z_{i_l},
\end{equation}
with the remaining variables free. If $I=\emptyset$, simply let
$u_{\emptyset}= u^{\emptyset}=u$.

The notation for $(0,1)$-forms is similar, but shifted. Let $f=\sum_j f_j\, d\bar z_j\in C^{n-1}_{(0,1)}(\overline{D})$ and consider first a singleton $I\subset\{1,2,\dots,n\}$. Define
\begin{equation*}
f_{I}=f_j,\qquad \text{when }I=\{j\}\subset\{1,2,\dots,n\}.
\end{equation*}
If $I=\{i_1,\dots,i_l\}$ and $l>1$, let
\begin{equation}\label{E:subl_form1}
f_{I}=\frac{\partial^{l-1}f_{i_1}}{\partial\bar\zeta_{i_2}\cdots\partial\bar\zeta_{i_l}}.
\end{equation}
No meaning is assigned to $f_{\emptyset}$ when $f$ is a $(0,1)$-form. In all cases, for a $(0,1)$-form $f$, $f_{I}$ involves barred derivatives of order $l-1$, while  $u_{I}$ involves barred derivatives of order $l$ when $u$ is a function. Note the association $f\longrightarrow f_{I}$ is a mapping from $(0,1)$-forms to functions.

Now suppose that $\dbar u=f$. When $I=\{j\}$, it follows immediately that $f_{I}=f_j=\frac{\partial u}{\partial\bar z_j}= u_{I}$. Furthermore, since $\dbar f=0$, it follows that $f_{I}$ depends only on the elements of $I\subset\{1,2,\dots,n\}$ and not on its presentation, e.g. which element is designated $i_1$. Differentiating \eqref{E:subI} and \eqref{E:subl_form1} thus gives
\begin{equation}\label{E:fI=uI}
f_{I}=u_{I}\qquad\text{when }\dbar u=f,
\end{equation}
for all $\emptyset\neq I\subset\{1,2,\dots,n\}$.

\subsubsection{A representation result}

For any $\emptyset\neq I\subset\{1,\dots ,n\}$, define the  {\it (partial) solid multi-Cauchy transform} of $u\in C(D)$ as
\begin{equation}\label{E:solidmultiCT}
\bm{C}^{I}(u)=\frac{1}{(2\pi i)^l}\int_{D_{i_1}\times\cdots\times D_{i_l}}\frac{u\,d\bar\zeta_{i_1}\wedge d\zeta_{i_1}\wedge\cdots\wedge d\bar\zeta_{i_l}\wedge d\zeta_{i_l}}{(\zeta_{i_1}-z_{i_1})\cdots(\zeta_{i_l}-z_{i_l})}.
\end{equation}
If $I=\emptyset$, let
$ \bm{C}^{\emptyset}(u)=u$.

The subscript/superscript notation yields a succinct extension of Lemma \ref{L:stokes} to product domains.

\begin{lemma}
\label{indCauchy}
Let $D=D_1\times\cdots\times D_n$ be a bounded product domain in $\C^n$, with each  $D_j\subset\C$ having piecewise $C^1$ boundary. If $u\in C^n\left(\overline{D}\right)$, then
\begin{equation}\label{E:repCauchySolid}
\sC_n(u)=\sum_{I\subset\{1,2,\dots,n\}}\bm{C}^{I}\left(u_{I}^{I^c}\right),
\end{equation}
where $I^c$ is the complement of $I$ in $\{1,2,\dots,n\}$.
\end{lemma}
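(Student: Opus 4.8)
The plan is to prove \eqref{E:repCauchySolid} by induction on the number $n$ of factors, stripping off one factor at a time via the one-dimensional Cauchy--Pompeiu identity of Lemma \ref{L:stokes}. For $n=1$ that lemma reads $\sC(u) = u + \bm{C}(u_{\bar\zeta_1})$ for $u\in C^1(\overline{D_1})$, and since $\bm{C}^{\emptyset}$ is the identity this is exactly $\sC_1(u) = \bm{C}^{\emptyset}(u_{\emptyset}^{\{1\}}) + \bm{C}^{\{1\}}(u_{\{1\}}^{\emptyset})$, the case $n=1$ of \eqref{E:repCauchySolid}.

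For the inductive step, assume \eqref{E:repCauchySolid} holds on products of $n-1$ one-dimensional factors for all $C^{n-1}$ functions up to the closure, and let $u\in C^n(\overline{D})$ with $D=D_1\times\cdots\times D_n$. Start from the recursion \eqref{recursive}, which applies Lemma \ref{L:stokes} in the variable $z_n$ after the Fubini splitting \eqref{E:fubini}:
\[
\sC_n(u)(z) = \sC_{n-1}\big(\bm{C}^{\{n\}}(u_{\{n\}})\big)(z') + \sC_{n-1}\big(u^{\{n\}}\big)(z'),\qquad z'=(z_1,\dots,z_{n-1}),
\]
where I have rewritten $u(\cdot,z_n)=u^{\{n\}}$ and $\bm{C}(u_{\bar\zeta_n})(\cdot,z_n)=\bm{C}^{\{n\}}(u_{\{n\}})$. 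Both arguments of the outer $\sC_{n-1}$ are functions of $(z_1,\dots,z_{n-1})$ depending on $z_n$ as a parameter, and both are of class $C^{n-1}$ in $(z_1,\dots,z_{n-1})$ up to the closure: for $u^{\{n\}}$ this is clear, and for $\bm{C}^{\{n\}}(u_{\{n\}})$ one differentiates under the integral, which is legitimate because $z_1,\dots,z_{n-1}$ never meet the singularity at $\zeta_n=z_n$ and $u_{\{n\}}=u_{\bar\zeta_n}\in C^{n-1}(\overline{D})$ (see also Lemma \ref{L:smoothCT}). The same computation shows $(u^{\{n\}})_J=(u_J)^{\{n\}}$ and $\big(\bm{C}^{\{n\}}(u_{\{n\}})\big)_J=\bm{C}^{\{n\}}(u_{J\cup\{n\}})$ for $J\subset\{1,\dots,n-1\}$, since barred derivatives in the $\zeta_j$ with $j\le n-1$ commute both with evaluation at $z_n$ and with the $\zeta_n$-integration in $\bm{C}^{\{n\}}$.

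Now apply the inductive hypothesis to each term, writing $J^c$ for the complement of $J$ in $\{1,\dots,n-1\}$. For the second term, $\sC_{n-1}(u^{\{n\}}) = \sum_{J\subset\{1,\dots,n-1\}}\bm{C}^{J}\big((u^{\{n\}})_J^{\,J^c}\big) = \sum_{J\subset\{1,\dots,n-1\}}\bm{C}^{J}\big(u_J^{\,J^c\cup\{n\}}\big)$; since $J^c\cup\{n\}$ is precisely the complement of $J$ in $\{1,\dots,n\}$, this equals $\sum_{I\subset\{1,\dots,n\},\,n\notin I}\bm{C}^I(u_I^{I^c})$. For the first term, $\sC_{n-1}\big(\bm{C}^{\{n\}}(u_{\{n\}})\big) = \sum_{J\subset\{1,\dots,n-1\}}\bm{C}^{J}\Big(\big(\bm{C}^{\{n\}}(u_{J\cup\{n\}})\big)^{J^c}\Big)$; using that evaluation in the slots of $J^c$ commutes with the $\zeta_j$-integrations ($j\in J$) defining $\bm{C}^J$, that $\bm{C}^{J}\circ\bm{C}^{\{n\}}=\bm{C}^{J\cup\{n\}}$ by Fubini (the index sets are disjoint), and that $J^c=(J\cup\{n\})^c$, each summand equals $\bm{C}^{J\cup\{n\}}\big(u_{J\cup\{n\}}^{\,(J\cup\{n\})^c}\big)$, so the first term is $\sum_{I\subset\{1,\dots,n\},\,n\in I}\bm{C}^I(u_I^{I^c})$. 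Adding the two sums yields \eqref{E:repCauchySolid}.

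The only substantive point beyond this bookkeeping is the justification of differentiation under the integral for $\bm{C}^{\{n\}}(u_{\{n\}})$ in the parameter variables and the verification that every derivative, evaluation, and Cauchy transform commutes as claimed; this is where the hypothesis $u\in C^n(\overline D)$ is used, since in the worst case (the term with $I=\{1,\dots,n\}$) a barred derivative of order $n$ lands on $u$. The combinatorial split — the subsets $I$ avoiding $n$ coming from the second term and those containing $n$ from the first — is then forced. Equivalently, one may skip the induction and iterate \eqref{recursive} once in each of the $n$ variables: each variable $j$ either survives into a solid transform $\bm{C}^{\{j\}}$ carrying an additional $\bar\zeta_j$-derivative or is evaluated at $z_j$; after all $n$ steps the surviving indices form a set $I$, the evaluated ones form $I^c$, Fubini assembles the survived transforms into $\bm{C}^I$, and the accumulated derivatives produce exactly $u_I$, the total derivative order on $u$ being $|I|\le n$.
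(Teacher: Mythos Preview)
Your proof is correct and follows essentially the same approach as the paper: induction on $n$, base case from Lemma \ref{L:stokes}, inductive step via the recursion \eqref{recursive}, and the same combinatorial split into subsets containing or not containing the last index. You supply more detail on the commutation of derivatives, evaluations, and partial Cauchy transforms than the paper does, but the argument is otherwise identical.
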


\begin{proof}
Induct on $n$. When $n=1$, Lemma \ref{L:stokes} gives the conclusion. Assume the conclusion holds for $n=1,\dots,k-1$ ($k\ge2$). 

For $n=k$, the formula \eqref{recursive} implies
\[
\sC_k(u)=\sC_{k-1}(\bm{C}^{\{k\}}u_{\{k\}})+\sC_{k-1}(u^{\{k\}}).
\]
Using the inductive assumption, this yields 
\begin{align*}
\sC_k(u)
&=\sum_{I\subset\{1,\dots,k-1\}}\bm{C}^{I}\left((\bm{C}^{\{k\}}u_{\{k\}})_{I}^{I^c}\right)+\sum_{I\subset\{1,\dots,k-1\}}\bm{C}^{I}\left((u^{\{k\}})_{I}^{I^c}\right)\\
&=\sum_{I\subset\{1,\dots,k-1\}}\bm{C}^{I\cup\{k\}}\left(u_{I\cup\{k\}}^{I^c}\right)+\sum_{I\subset\{1,\dots,k-1\}}\bm{C}^{I}\left(u^{I^c\cup\{k\}}_{I}\right)\\
&=\sum_{k\in I\subset\{1,\dots,k\}}\bm{C}^{I}\big(u_{I}^{I^c}\big)+\sum_{k\notin I\subset\{1,\dots,k\}}\bm{C}^{I}\big(u^{I^c}_{I}\big)\\
&=\sum_{I\subset\{1,\dots,k\}}\bm{C}^{I}(u_{I}^{I^c}).
\end{align*}
Thus the conclusion holds for $n=k$ as well.
\end{proof}

\begin{remark} Lemma \ref{indCauchy} has seemingly not been noted before. Once noticed, the result may also be derived by multiple applications of Stokes theorem, starting with the terms on the right-hand side of \eqref{E:repCauchySolid}. However these computations are quite cumbersome even when $n$ is small, in contrast to the
almost trivial proof given above.
\end{remark}

\subsubsection{New solution from old}
The interplay evaluation-differentiation-transform expressed by Lemma \ref{indCauchy} provides a simple and powerful method for constructing solutions to $\dbar$ on product domains. Recall that
 $\bm{C}^{\emptyset}(u_{\emptyset}^{\emptyset^c})=u(z)$. The formula \eqref{E:repCauchySolid} can therefore be rewritten as
\begin{equation}
\label{CIu}
u-\sC_n(u)=-\sum_{\emptyset\neq I\subset\{1,\dots,n\}}\bm{C}^{I}\left(u_{I}^{I^c}\right).
\end{equation}
Similar to Corollary \ref{L:C1solution}, \eqref{CIu} gives a solution operator to the $\dbar$-equation on $D$. 
\begin{proposition}\label{integralformula}
Let $D=D_1\times\cdots\times D_n$ be a bounded product domain in $\C^n$, where $D_j\subset\C$ are domains with piecewise $C^1$ boundaries. 

Suppose $u\in C^n(\overline{D})$ solves the equation $\dbar u=f$, where $f\in C^{n-1}_{(0,1)}(\overline{D})$ is $\dbar$-closed. Then 
\begin{equation}\label{E:strongsolution}
v(z)=: -\sum_{\emptyset\neq I\subset\{1,\dots,n\}}\bm{C}^{I}\left(f_{I}^{I^c}\right)
\end{equation}
also satisfies $\dbar v=f$.
\end{proposition}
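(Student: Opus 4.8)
The plan is to leverage the representation formula \eqref{CIu} directly. Since $u \in C^n(\overline{D})$, Lemma \ref{indCauchy} applies and gives
\[
u(z) - \sC_n(u)(z) = -\sum_{\emptyset \neq I \subset \{1,\dots,n\}} \bm{C}^I\big(u_I^{I^c}\big).
\]
Now I invoke the identity \eqref{E:fI=uI}, namely $f_I = u_I$ for all nonempty $I$, which holds precisely because $\dbar u = f$ and $\dbar f = 0$ (so that $f_I$, defined via \eqref{E:subl_form1}, is independent of the chosen presentation of $I$ and agrees with the fully-differentiated $u_I$). Substituting $u_I = f_I$ on the right-hand side, and noting that the partial evaluation operation $(\cdot)^{I^c}$ is the same for functions and for the function-valued object $f_I$, gives
\[
v(z) = -\sum_{\emptyset \neq I \subset \{1,\dots,n\}} \bm{C}^I\big(f_I^{I^c}\big) = u(z) - \sC_n(u)(z).
\]

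With this identity in hand, the conclusion $\dbar v = f$ is immediate: $\dbar u = f$ by hypothesis, and $\dbar(\sC_n u) = 0$ because $\sC_n(u) \in \co(D)$ — differentiation under the integral sign in \eqref{E:multiCT} shows the multi-Cauchy transform of a continuous function is holomorphic on $D$ (each kernel factor $1/(\zeta_j - z_j)$ is holomorphic in $z$ off the boundary). Hence $\dbar v = \dbar u - \dbar(\sC_n u) = f - 0 = f$. One should also remark that $v$ is well-defined and the differentiation is legitimate: each $f_I^{I^c}$ is continuous on the appropriate lower-dimensional product (using $f \in C^{n-1}_{(0,1)}(\overline{D})$, so the order-$(l-1)$ barred derivatives defining $f_I$ exist and are continuous), so each $\bm{C}^I(f_I^{I^c})$ is smooth in the integrated variables by Lemma \ref{L:smoothCT} and trivially smooth in the remaining free variables; thus $v \in C^\infty$ in the relevant sense and the computation of $\dbar v$ is classical rather than merely weak.

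The main point requiring care — and the only genuine obstacle — is the justification of $f_I = u_I$ when $I$ is not a singleton. For a singleton $I = \{j\}$ this is the trivial observation $f_j = \partial u/\partial \bar z_j$. For $|I| = l > 1$, one must check that differentiating $f_{i_1} = \partial u/\partial\bar z_{i_1}$ in the variables $\bar z_{i_2},\dots,\bar z_{i_l}$ reproduces $\partial^l u/(\partial\bar\zeta_{i_1}\cdots\partial\bar\zeta_{i_l})$, which is just commutation of mixed partials (valid since $u \in C^n$), and — more subtly — that the result does not depend on which element of $I$ was singled out as $i_1$; this is where $\dbar f = 0$ enters, since $\partial f_{i}/\partial\bar z_{j} = \partial f_{j}/\partial\bar z_{i}$ for all $i,j$. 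All of this is already recorded in the discussion surrounding \eqref{E:fI=uI}, so in the write-up it suffices to cite that identity. Everything else is bookkeeping: matching the superscript/evaluation notation on both sides and confirming the empty-set conventions.
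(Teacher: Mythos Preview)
Your proof is correct and follows exactly the paper's approach: rewrite \eqref{CIu} via \eqref{E:fI=uI} to identify $v$ with $u-\sC_n(u)$, then apply $\dbar$ and use that $\sC_n(u)$ is holomorphic. One minor remark: your aside invoking Lemma~\ref{L:smoothCT} to assert $v\in C^\infty$ is both unnecessary and not quite applicable here (that lemma assumes a smoothly bounded domain and $C^\infty$ data, whereas $D_j$ has only piecewise $C^1$ boundary and $f_I$ is merely continuous); the identity $v=u-\sC_n(u)$ already gives enough regularity to compute $\dbar v$ classically.
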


\begin{proof} Since $\dbar u=f$,  \eqref{CIu} becomes
\begin{align}\label{CIf}
u-\sC_n(u)&=-\sum_{\emptyset\neq I\subset\{1,\dots,n\}}\bm{C}^{I}\left(u_{I}^{I^c}\right) \notag \\
&=-\sum_{\emptyset\neq I\subset\{1,\dots,n\}}\bm{C}^{I}\left(f_{I}^{I^c}\right),
\end{align}
by \eqref{E:fI=uI}.

Take $\dbar$ of both sides of \eqref{CIf}: the result on the left-hand side is $f$, since $\dbar u=f$ and $\sC_n(u)$ is holomorphic. 
\end{proof}

Proposition \ref{integralformula} provides an explicit solution to $\dbar v=f$, but only under the restrictive
assumptions that (i) $f\in C^{n-1}_{(0,1)}(\overline{D})$ and (ii) a solution $u\in C^n(\overline{D})$ to $\dbar u=f$ is already known to exist. These auxiliary assumptions are
unnatural and limit application of Proposition \ref{integralformula}. Removing these restrictions is turned to next.


\section{$L^p$ data}
\label{S:Lp}

Let $D=D_1\times\cdots\times D_n$ be a bounded product domain in $\C^n$, with $D_j\subset\C$ domains with piecewise $C^1$ boundaries. 
Consider the $\dbar$-equation on $D$ with $L^p$ data:  $\dbar u=f$
where $f$ is a $\dbar$-closed $(0,1)$-form with $L^p$-coefficients, $1\le p\le \infty$, differentiations taken in the weak sense. 
Our goal is to establish existence and $L^p$ regularity of a solution to $\dbar$ under conditions naturally arising from the structure of the $L^p$ spaces.
Existence of the solution operator is derived first, to highlight the new boundedness assumptions. Estimates in $L^p(D)$ follow from standard arguments presented in
subsection \ref{SS:LpRegularity}.

\subsection{Existence}
A non-isotropic $L^p$-Sobolev norm occurs in the existence result. Notation from \S\ref{S:integral} is used and all derivatives are taken distributionally.
\begin{definition}\label{Banach}
On the space of $(0,1)$-forms on $D$, let
\[
\cb=\{f \,|\dbar f=0 (weakly) \text{ and }\,f_{I}\in L^1(D),\text{for } I\subset\{1,2,\dots,n\},\,I\neq\emptyset\}
\]
be the Banach space with norm
\[
\|f\|_{\cb}=\sum_{I\neq\emptyset}\left\|f_{I}\right\|_{L^1(D)}.
\]
\end{definition}

The solution operator will be given on $\cb$. 

\begin{definition}\label{D:T}
For $f\in\cb$, define

\begin{equation}\label{T}
T(f)=-\sum_{\emptyset\neq I\subset\{1,\dots,n\}}\bm{C}^{I}(f_{I}^{I^c}).
\end{equation}
\end{definition}
The integral operators on the right of \eqref{T} are not immediately well-defined.  For one reason,  ``evaluation'' -- implied by the superscript $I^c$ -- needs to be interpreted for  $L^p$ functions. Also,
 $\bm{C}^{I}$ has so far only been defined for
continuous functions. 

Both issues are readily handled, yielding the following:

\begin{proposition}\label{P:bounded} The operator  $T:\cb\to L^1(D)$ is norm bounded.
\end{proposition}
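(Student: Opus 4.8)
The plan is to show that each summand $\bm{C}^I(f_I^{I^c})$ in \eqref{T} defines an element of $L^1(D)$ with norm controlled by $\|f_I\|_{L^1(D)}$, from which boundedness of $T$ follows by the triangle inequality. The two conceptual points flagged after Definition \ref{D:T} — making sense of the partial evaluation $(\cdot)^{I^c}$ on an $L^p$ function, and extending $\bm{C}^I$ from continuous to $L^1$ functions — are handled together by observing that, for fixed $I=\{i_1,\dots,i_l\}$, the partially evaluated function $f_I^{I^c}$ is, for a.e.\ choice of the ``frozen'' variables $z_{I^c}$, a genuine $L^1$ function of the $l$ variables $\zeta_{i_1},\dots,\zeta_{i_l}$ on $D_{i_1}\times\cdots\times D_{i_l}$; this is exactly what Fubini's theorem gives, since $f_I\in L^1(D)$. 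Thus $\bm{C}^I$ acting in the $\zeta_I$-variables only needs to be defined on $L^1(D_{i_1}\times\cdots\times D_{i_l})$, which is standard: the Cauchy kernel $\prod_m (\zeta_{i_m}-z_{i_m})^{-1}$ is locally integrable on $\C^l$ (being a tensor product of the locally integrable $1/\zeta$ on $\C$), so convolution-type estimates apply.

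The main estimate I would carry out is the bound
\[
\big\|\bm{C}^I(g)\big\|_{L^1(D_{i_1}\times\cdots\times D_{i_l})}\le C_I\,\|g\|_{L^1(D_{i_1}\times\cdots\times D_{i_l})},
\]
for $g\in L^1$, where $C_I$ depends only on the diameters of the $D_{i_m}$. This follows by writing $\bm{C}^I$ as an iterated one-dimensional solid Cauchy transform $\bm{C}\circ\cdots\circ\bm{C}$ (one factor per index in $I$), using Fubini to reduce to the one-variable statement that $h\mapsto \frac{1}{2\pi i}\int_{D_0} \frac{h(\zeta)}{\zeta-z}\,d\bar\zeta\wedge d\zeta$ is bounded on $L^1(D_0)$ — which is immediate from $\int_{D_0}\big|\frac{1}{\zeta-z}\big|\,dA(\zeta)\le C\,\mathrm{diam}(D_0)$ uniformly in $z$, Tonelli, and swapping the order of integration. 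Applying this with $g=f_I^{I^c}$ and then integrating in the remaining variables $z_{I^c}$ (again Fubini, the kernel being independent of those variables) yields $\|\bm{C}^I(f_I^{I^c})\|_{L^1(D)}\le C_I\|f_I\|_{L^1(D)}$. Summing over the finitely many nonempty $I\subset\{1,\dots,n\}$ gives $\|T(f)\|_{L^1(D)}\le C\|f\|_{\cb}$.

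I expect the only real obstacle to be bookkeeping rather than substance: one must be careful that the operations ``differentiate $f$ to form $f_I$,'' ``freeze the variables in $I^c$,'' and ``apply $\bm{C}^I$ in the variables of $I$'' act on disjoint sets of variables and hence commute with the Fubini slicing, so that no measure-zero ambiguity in the definition of $f_I^{I^c}$ actually matters. Since $f_I$ is already a fixed $L^1(D)$ function by hypothesis (the distributional derivatives having been taken once and for all in Definition \ref{Banach}), this is clean: for a.e.\ $z_{I^c}$ the slice lies in $L^1$ of the complementary polydisc, $\bm{C}^I$ is applied there, and measurability in all variables jointly follows from the explicit integral kernel. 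With this understood, the proof is essentially the one-variable $L^1$ bound for the solid Cauchy transform, iterated and tensored.
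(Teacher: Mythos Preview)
Your proposal is correct and follows essentially the same approach as the paper: both use Fubini to interpret $f_I^{I^c}$ as an a.e.-defined $L^1$ slice, write $\bm{C}^I$ as an iteration of one-variable solid Cauchy transforms, and invoke the $L^1$-boundedness of the one-dimensional transform (the paper's Lemma \ref{Lp1dim}). Your direct argument via the uniform bound $\int_{D_0}|\zeta-z|^{-1}\,dA(\zeta)\le C$ is a slightly more hands-on version of the Young's-inequality proof of Lemma \ref{Lp1dim}, but the content is the same.
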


\begin{proof} If $f\in\cb$, then $f_{I}\in L^1(D)$ for all $I\subset\{1,\dots , n\}$. For fixed $I$, suppose $I=\{i_1,\dots ,i_m\}$ with $i_1<\dots < i_m$.  Fubini's
theorem implies 
\begin{equation}\label{E:fubini2}
\int_{D_{i_1}\times\cdots\times D_{i_m}} \left| f_{I}(\zeta)\right|\, d\bar\zeta_{i_1}\wedge d\zeta_{i_1}\wedge\cdots\wedge d\bar\zeta_{i_m}\wedge d\zeta_{i_m}\in L^1 \left(D^c\right),
\end{equation}
where $D^c$ is the complementary product space $D_{j_1}\times\cdots\times D_{j_{n-m}}$ with $\{i_1,\dots ,i_m\}\cup\{j_1,\dots j_{n-m}\} =\{1,2,\dots ,n\}$.
Fubini' theorem also implies that $f_{I}(\cdot,\dots, z_{j_1},\dots, z_{j_{n-m}}, \dots, \cdot)\in L^1\left(D_{i_1}\times\cdots\times D_{i_m}\right)$ for a.e. $(z_{j_1},\dots ,z_{j_{n-m}})\in D_{j_1}\times\cdots\times D_{j_{n-m}}$.
Define $f_{I}^{I^c}$ to be this $L^1$ function.

Consider \eqref{T}: the operators $\bm{C}^{I}$ are partial solid Cauchy transforms over various sub-product domains obtained by intersecting $D$ with coordinate hyperplanes. These integrals can be evaluated as iterated one-variable integrals, again by Fubini.
Repeatedly applying Lemma \ref{Lp1dim} -- for different $g=f_{I}^{I^c}$, as $I$ varies over subsets of $\{1,\dots , n\}$ and $j$ ranges between $1$ and $n$ -- gives the claimed result.
\end{proof}

The operator $T$ is thus defined on $\cb$ and bounded into $L^1(D)$.
A separate argument is needed to show \eqref{T} solves $\dbar$:

\begin{theorem}
\label{main1} Let $D=D_1\times\cdots\times D_n$ be a bounded product domain in $\C^n$, where $D_j\subset\C$ are domains with piecewise $C^1$ boundaries. 

For $f\in\cb$, the operator $T$ in \eqref{T} is a weak solution operator for the $\dbar$-equation:
\[
\left(\dbar (Tf),\varphi\right) =(f, \varphi) \quad\forall\varphi\in C^\infty_0(D),\quad \text{ if } f\in\cb.
\]
\end{theorem}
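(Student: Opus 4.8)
The plan is to reduce the weak identity to the already-proved strong identity of Proposition \ref{integralformula} by an approximation argument. The obstruction to applying Proposition \ref{integralformula} directly is twofold: first, $f\in\cb$ need not be $C^{n-1}_{(0,1)}(\overline D)$; second, even if it were smooth, we have no a priori guarantee that a solution $u\in C^n(\overline D)$ to $\dbar u=f$ exists on $D$. The first issue will be handled by mollification. The second issue turns out to be a non-issue in the smooth category: once $f$ is smooth and $\dbar$-closed on a product domain, a smooth solution to $\dbar u=f$ on the slightly shrunken product domain can be built by solving the one-variable $\dbar$-equation one factor at a time (each step uses \eqref{I:cauchy} with a smooth cutoff, as in the Dolbeault--Grothendieck argument), so Proposition \ref{integralformula} does apply there.

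First I would fix $\varphi\in C^\infty_0(D)$ and choose a slightly larger relatively compact product domain $D'=D_1'\times\cdots\times D_n'\ssubset D$ containing $\operatorname{supp}\varphi$ with each $D_j'\ssubset D_j$ still having piecewise $C^1$ boundary. Next, for $\delta>0$ small, let $f^{(\delta)}$ be the componentwise mollification of $f$ by a product bump function $\chi_\delta(z)=\prod_j\rho_{\delta}(z_j)$; because the mollifier is a product, $f^{(\delta)}$ is still $\dbar$-closed on $D'$, it is smooth there, and crucially $\big(f^{(\delta)}\big)_I=\big(f_I\big)^{(\delta)}$ for every $I\neq\emptyset$ — convolution commutes with the barred derivatives that define $f_I$. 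Hence $\big(f^{(\delta)}\big)_I\to f_I$ in $L^1_{loc}(D)$, in particular in $L^1(D')$, as $\delta\to0$. By Proposition \ref{integralformula} applied on $D'$, the form $v^{(\delta)}:=-\sum_{\emptyset\neq I}\bm{C}^I\big((f^{(\delta)})_I^{I^c}\big)$ (the transforms now taken over sub-product domains of $D'$) satisfies $\dbar v^{(\delta)}=f^{(\delta)}$ strongly on $D'$, hence $(\dbar v^{(\delta)},\varphi)=(f^{(\delta)},\varphi)$, i.e. $-(v^{(\delta)},\dbar^{\,t}\varphi)=(f^{(\delta)},\varphi)$ where $\dbar^{\,t}$ is the formal transpose.

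Then I would pass to the limit $\delta\to0$ on both sides. The right side converges to $(f,\varphi)$ since $f^{(\delta)}\to f$ in $L^1(D')$. For the left side, the point is that $v^{(\delta)}$ is built from $(f^{(\delta)})_I$ by the same bounded operators that appear in $T$: by the $L^1$-boundedness packaged in Proposition \ref{P:bounded} (more precisely, by the repeated use of Lemma \ref{Lp1dim} in its proof, applied on $D'$), the map sending the data $\{g_I\}$ to $-\sum_I\bm C^I(g_I^{I^c})$ is bounded from $\bigoplus_I L^1(D')$ into $L^1(D')$; since $(f^{(\delta)})_I\to f_I$ in $L^1(D')$, we get $v^{(\delta)}\to T(f)|_{D'}$ in $L^1(D')$. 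As $\dbar^{\,t}\varphi$ is a smooth form supported in $D'$, the pairing $(v^{(\delta)},\dbar^{\,t}\varphi)\to (T f,\dbar^{\,t}\varphi)$, and therefore $(\dbar(Tf),\varphi)=(f,\varphi)$. Since $\varphi\in C^\infty_0(D)$ was arbitrary, this is exactly the asserted weak identity.

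\textbf{Main obstacle.} I expect the one delicate point to be the claim that a smooth $\dbar$-closed $f$ on a product domain admits a $C^n(\overline{D'})$ solution $u$ on the shrunken domain, so that Proposition \ref{integralformula} legitimately applies to $f^{(\delta)}$ there. This is standard — solve in $z_1$ by the Cauchy integral \eqref{I:cauchy} of a smoothly truncated $f_1$, correct by a holomorphic-in-$z_1$ term to kill the remaining components using $\dbar$-closedness, and iterate over the factors, with smoothness up to $\overline{D'}$ coming from the smoothness of the data on a neighborhood of $\overline{D'}$ and Lemma \ref{L:smoothCT} — but it must be invoked carefully. An alternative that sidesteps it entirely is to observe that \eqref{CIu}, which is pure Lemma \ref{indCauchy} plus the identity $f_I=u_I$, only ever uses $u$ through the combinations $u_I=f_I$; one can therefore run the mollification argument directly on the identity of Proposition \ref{integralformula} viewed as an identity among the $f_I$'s, never needing $u$ at all. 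Either way, the remaining estimates (commuting mollification with $\partial/\partial\bar\zeta$, the $L^1$ continuity of $\bm C^I$) are routine given Lemma \ref{Lp1dim} and Proposition \ref{P:bounded}.
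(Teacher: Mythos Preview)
Your overall strategy matches the paper's: mollify $f$, apply Proposition~\ref{integralformula} to the smooth approximants on a shrunken product domain, then pass to the limit using the $L^1$-boundedness packaged in Proposition~\ref{P:bounded}. Two remarks.

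On producing the auxiliary smooth solution $u$ so that Proposition~\ref{integralformula} applies, the paper takes a shorter route than your factor-by-factor Dolbeault--Grothendieck construction. Since the shrunken product $D^{\delta/2}$ is pseudoconvex, H\"ormander's $L^2$ existence theorem gives $u^\varepsilon\in L^2(D^{\delta/2})$ with $\dbar u^\varepsilon=f^\varepsilon$; interior regularity of $\dbar$ on functions then forces $u^\varepsilon\in C^\infty(D^{\delta/2})\subset C^\infty(\overline{D^\delta})$. Your approach works too, and your alternative (b) --- verifying $\dbar T(f^{(\delta)})=f^{(\delta)}$ directly by a telescoping computation on the $f_I$, never invoking $u$ --- is also valid, but you would have to carry out that computation rather than assert it.

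There is, however, a genuine gap in your limiting step. You define $v^{(\delta)}$ with the transforms taken over sub-product domains of $D'$, so $v^{(\delta)}=T_{D'}(f^{(\delta)})$, the solution operator \emph{on $D'$}. The $L^1$-boundedness you cite then yields $v^{(\delta)}\to T_{D'}(f)$ in $L^1(D')$, not $T(f)|_{D'}$ as you write: the two differ, since $T(f)$ integrates over the full factors $D_j$ while $T_{D'}(f)$ integrates only over $D'_j\subsetneq D_j$. Your argument therefore establishes $\big(\dbar\, T_{D'}(f),\varphi\big)=(f,\varphi)$, which says nothing directly about $T(f)$. The paper closes this by a second limit: it works on the family $D^\delta$, obtains $\dbar\, T^\delta(f)=f$ weakly on each $D^\delta$, and then lets $\delta\to0^+$, invoking Proposition~\ref{P:bounded} once more to see $T^\delta(f)\to T(f)$. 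You need this exhaustion step (or an equivalent device) to finish; fixing $D'$ in terms of a single test function $\varphi$ does not avoid it.
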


\begin{proof} It suffices to assume each $D_j$ as $C^1$ bounded, since $D_j$ as in the hypothesis can be exhausted by such domains.
For $j=1,2,\dots,n$, let $\rho_j$ be a defining function of $D_j$. For $\delta>0$, let
\[
D^{\delta}_j=\{z\in\C\,|\,\rho_j(z)<-\delta\}
\]
and $D^{\delta}=D_1^{\delta}\times\cdots\times D_n^{\delta}$. Let $\cb^{\delta}$ be the Banach space defined in Definition \ref{Banach} with $D$ replaced by $D^{\delta}$. Let $T^{\delta}$ be the operator defined  in  \ref{T} with $\cb$ and $D$ replaced by $\cb^{\delta}$ and $D^{\delta}$ respectively.

Temporarily fix $\delta >0$.
For $f\in\cb$, convolution with an approximate identity (see e.g. \cite[Chap. 5.3, Theorem 1]{Evans98}) gives a sequence $\{f^{\ve}\}\subset  C^{\infty}_{(0,1)}(D^{\delta/2})\subset C^{\infty}_{(0,1)}(\overline{D^{\delta}})$ satisfying
\[
\dbar f^{\ve}=0,
\]
\[
f^{\ve}\to f\,\,\,\,\text{and}\,\,\,\,f_{I}^{\ve}\to f_{I}\,\,\,\,\text{in}\,\,L^1\left(D^{\delta}\right)\,\,\,\,\text{as}\,\,\ve\to0^+,
\]
for all $ I\subset\{1,2,\dots,n\},\,I\neq\emptyset$. Note the derivatives implicit in the notation $f_{I}$ are all constant-coefficient differential operators, so the claimed approximation is indeed the standard mollifier argument.

Thus  $f^{\ve}\to f$ in $\cb^{\delta}$ as $\ve\to0^+$ and $\{f^{\ve}\}$ are all $\dbar$-closed on $D^{\delta/2}$. 
Since $D^{\delta/2}$ is pseudoconvex,  Hormander's theorem \cite{hormander_scv_book} implies the existence of $u^{\ve}\in L^2\left(D^{\delta/2}\right)$ solving $\dbar u^{\ve}=f^{\ve}$ for all ${\ve}$ sufficiently small. Since 
$f^{\ve}\in C^{\infty}_{(0,1)}(D^{\delta/2})$, interior regularity
of $\dbar$ on functions implies that $u^{\ve} \in C^{\infty}(D^{\delta/2})\subset C^{\infty}(\overline{D^{\delta}})$. Consequently the hypotheses of Proposition \ref{integralformula} hold, so $v^{\delta,\ve}=T^{\delta}(f^{\ve})$ solves $\dbar v^{\delta,\ve}=f^{\ve}$ on $D^{\delta}$.

Now let $\ve\to 0^+$. The fact that $f^{\ve}\to f$ in $\cb^{\delta}$ and Proposition \ref{P:bounded} (applied to $T^{\delta}$) implies $u=T^{\delta}(f)$ weakly solves $\dbar u=f$ on $D^{\delta}$. But Proposition \ref{P:bounded} also implies
\[
\lim_{\delta\to0^+}T^{\delta}(f)=T(f)
\]
for each $f\in\cb$. Letting $\delta\to 0^+$ shows $\dbar\left(T(f)\right)=f$ weakly, as claimed.
\end{proof}

\begin{remark} Interior regularity for $\dbar$ on functions is used in the proof of Theorem \ref{main1}. This  fails for higher level forms; an extension of
Theorem \ref{main1} to data in $L^p_{0,q}$, $q>1$, must deal with this fact.  See, e.g., \cite{McnVar17} for an approach that circumvents this difficulty in another context.
\end{remark}

\subsection{$f\in L^p_{(0,1)}(D)\not\Rightarrow$ existence of $Tf$}\label{SS:example}
The following example motivates why $T$ is restricted to $\cb$ in Proposition \ref{P:bounded}. The example in particular  shows requiring  $f\in L^p_{(0,1)}(D)$ alone does
not guarantee existence of $Tf$.

\begin{example} Let $\D^n$ denote the unit polydisc. Given $I\subset\{1,2,\dots,n\}$ and $I\neq\emptyset$. Assume $I=\{i_1,\dots,i_l\}$ and let
\[
z_I=z_{i_1}\cdots z_{i_l}.
\]
We construct a $\dbar$-closed $(0,1)$ form $f$, such that $f_{I}\notin L^1(\D^n)$ and $f_{J}\in L^1(\D^n)$ for all $J\subset\{1,2,\dots,n\}$, $J\neq\emptyset$, and $J\neq I$, but $T(f)$ does not exist. Moreover, when $|I|=l>1$, such $f$ is actually in $L^p_{(0,1)}(\D^n)$ for $1\le p<l$.

For $k=1,2,\dots$, on $\D^n$ let
\[
u^k(z):=\frac{1}{k}|z_I|^{2k}
\]
and
\[
f^{k}=\dbar u^k=z_{i_1}|z_{i_1}|^{2k-2}|z_{i_2}\cdots z_{i_l}|^{2k}\,d\bar z_{i_1}+\cdots+z_{i_l}|z_{i_l}|^{2k-2}|z_{i_1}\cdots z_{i_{l-1}}|^{2k}\,d\bar z_{i_l}.
\]
By \eqref{CIf},
\begin{equation}\label{E:exam}
T(f^k)(z)=u^k(z)-\sC_n(u^k)(z)=u^k(z)-\frac{1}{k}=\frac{1}{k}|z_I|^{2k}-\frac{1}{k}.
\end{equation}
Now define $f=\sum_kf^k$. Equation \eqref{E:exam} shows $T(f)$ does not exist, since the harmonic series diverges.

On the other hand, direct computation shows
\[
f_{I}=\sum_{k=1}^{\infty}\frac{\partial^l u^k}{\partial\bar z_{i_1}\cdots\partial\bar z_{i_l}}=z_I\sum_{k=1}^{\infty}k^{l-1}|z_I|^{2k-2}.
\]
Note that for $j=1,2,\dots,n$
\[
\int_{\D}|z_j|^{2k-1}\,dA(z_j)=2\pi\int_{0}^1r^{2k}\,dr=\frac{2\pi}{2k+1}\approx\frac{1}{k}\qquad\text{as\,\,\,}k\to\infty
\]
and
\[
\int_{\D}\,dA(z_j)=\pi\approx 1\qquad\text{as\,\,\,}k\to\infty.
\]
Therefore
\begin{align*}
\|f_{I}\|_{L^1(\D^n)}
&=\int_{\D^n}\left|z_I\sum_{k=1}^{\infty}k^{l-1}|z_I|^{2k-2}\right|\,dV(z)\\
&=\int_{\D^n}\sum_{k=1}^{\infty}k^{l-1}|z_I|^{2k-1}\,dV(z)\\
&\approx\sum_{k=1}^{\infty}k^{l-1}\left(\frac{1}{k}\right)^l=\sum_{k=1}^{\infty}\frac{1}{k}=\infty.
\end{align*}
For any $J\subset\{1,2,\dots,n\}$, $J\neq\emptyset$, and $J\neq I$, direct computation shows
\[
f_{J}=\left\{\begin{array}{lcc} 0 & \text{if} & J\not\subset I \\ z_J\sum_{k=1}^{\infty}k^{|J|-1}|z_J|^{2k-2}|z_{I\setminus J}|^{2k} & \text{if} & J\subset I \end{array}\right. .
\]
So for $J\not\subset I$, $\|f_{J}\|_{L^1(\D^n)}=0$. For $J\subset I$ and $J\neq\emptyset$, $|I|-|J|\ge 1$ and
\begin{align*}
\|f_{J}\|_{L^1(\D^n)}
&=\int_{\D^n}\left|z_J\sum_{k=1}^{\infty}k^{|J|-1}|z_J|^{2k-2}|z_{I\setminus J}|^{2k}\right|\,dV(z)\\
&=\int_{\D^n}\sum_{k=1}^{\infty}k^{|J|-1}|z_J|^{2k-1}|z_{I\setminus J}|^{2k}\,dV(z)\\
&\approx\sum_{k=1}^{\infty}k^{|J|-1}\left(\frac{1}{k}\right)^{|J|}\left(\frac{1}{k}\right)^{|I|-|J|}\le \sum_{k=1}^{\infty}O\Big(\frac{1}{k^2}\Big)<\infty.
\end{align*}
Thus assuming $\|f_{J}\|_{L^1}<\infty$ for all $J\neq I$ does not guarantee the existence of $T(f)$.

Moreover, when $|I|=l>1$, $f\in L^p_{(0,1)}(\D^n)$ for $1\le p<l$. To see this, note 
\[
f^k=f^k_{i_1}\,d\bar z_{i_1}+\cdots+f^k_{i_l}\,d\bar z_{i_l},
\]
where $f^k_{i_j}=z_{i_j}|z_{i_j}|^{2k-2}|z_I/z_{i_j}|^{2k}$ for $j=1,\dots,l$. It suffices to show $\sum_k f^k_{i_j}\in L^p(\D^n)$ for each $j$. By Minkowski's inequality, 
\begin{align*}
\left\|\sum_{k=1}^{\infty} f^k_{i_j}\right\|_{L^p(\D^n)}
&\le\sum_{k=1}^{\infty}\left\| f^k_{i_j}\right\|_{L^p(\D^n)}\\
&=\sum_{k=1}^{\infty}\left(\int_{\D^n}\left|z_{i_j}|z_{i_j}|^{2k-2}|z_I/z_{i_j}|^{2k}\right|^p\,dV(z)\right)^{1/p}\\
&\le C_p\sum_{k=1}^{\infty}\left(\frac{1}{k^l}\right)^{1/p}<\infty,
\end{align*}
provided $1\le p<l$.

In particular, if $I=\{1,2,\dots,n\}$, then $f\in L^p_{(0,1)}(\D^n)$ for $1\le p<n$ and $T(f)$ does not exist. This contrasts sharply with results on the Henkin-Cauchy-Fantappi\' e operator known on strongly pseudoconvex domains.
\end{example}

\subsection{Regularity with $L^p$ data}\label{SS:LpRegularity}
The solid Cauchy transform regularizes in $L^p$: see Lemma \ref{Linfty1dim}, note $r\le p$ there and $L^p(D)\subset L^r(D)$ since $D$ is bounded. As a result, less than $L^p$ control on the various $f_{I}$ will force an $L^p$ estimate on $T(f)$. Some preparation is needed to state the result.

First extend the superscript notation from \eqref{E:solidmultiCT} to iterated absolute Cauchy transforms. For  $I=\left\{i_1\dots , i_l\right\}\subset\{1,\dots ,n\}$, define 
\begin{equation}\label{E:solidmultiabsCT}
\left|\bm{C}\right|^{I}(u)=\frac{1}{(2\pi i)^l}\int_{D_{i_1}\times\cdots\times D_{i_l}}\frac{|u|\,d\bar\zeta_{i_1}\wedge d\zeta_{i_1}\wedge\cdots\wedge d\bar\zeta_{i_l}\wedge d\zeta_{i_l}}{\left|\zeta_{i_1}-z_{i_1}\right|\cdots\left|\zeta_{i_l}-z_{i_l}\right|}.
\end{equation}
Second, for fixed $I=\{i_1\,\dots,i_l\}\subset\{1,2,\dots,n\}$, suppose $I^c=\{j_1\,\dots,j_{n-l}\}$. The variables $z_{i_m}$ and $z_{j_k}$ are intermixed in any order. As a
notational simplification,  when $h$ is a function defined on $D$,  write $h(z', z'')$ to denote the function $h(z_1,\dots z_n)$ where $ z'$ are the variables $z_{i_1},\dots , z_{i_l}$ and $z''$ are the variables $z_{j_1}, \dots z_{j_{n-l}}$ intermixed in the order prescribed by increasing order on $I$ and $I^c$.
Next recall Minkowski's integral inequality: if $(A,\mu), (B,\nu)$ are measure spaces and $F$ is a measurable
function on $A\times B$, then
\begin{equation*}
\left(\int_A\left(\int_B\left|F(a,b)\right|\, d\nu(b)\right)^t d\mu(a)\right)^{1/t}\leq \int_B\left(\int_A\left|F(a,b)\right|^t\, d\mu(a)\right)^{1/t} d\nu(b)\quad\text{when }t\in [1,\infty).
\end{equation*}
See, e.g., \cite[Appendix A.1]{Steinbook70}. Finally, as notational shorthand let $dV(a_k) =d\bar a_k\wedge da_k$ for various symbols $a_k$.

\begin{theorem}\label{T:Lp} Let $D=D_1\times\cdots\times D_n$ be a bounded product domain in $\C^n$, where $D_k\subset\C$ are domains with piecewise $C^1$ boundaries. 
For a given $p\in [1,\infty]$, choose $r>\max\{2p/(p+2),1\}$. Let $f$ be a $\dbar$-closed $(0,1)$-form on $D$.

For each nonempty $I=\{i_1\,\dots,i_l\}\subset\{1,2,\dots,n\}$, let $I^c=\left\{j_1, \dots ,j_{n-l}\right\}$, $D'= D_{i_1}\times\dots\times D_{i_l}$, and $D''=D_{j_1}\times\dots\times D_{j_{n-l}}$. Assume 
$$\text{(H)}\qquad\qquad\left(\int_{D'} \left|f_{I}\left(z', z''\right)\right|^r\, dV\left(z_{i_1}\right)\wedge\dots\wedge dV\left(z_{i_l}\right)\right)^{1/r}\in L^p\left(D''\right).$$

Then $\dbar (Tf) =f$  and 
\begin{equation}\label{E:fancyLp}
\left\|Tf\right\|^p_{L^p(D)}\leq C \sum_{\emptyset\neq I\subset\{1,\dots,n\}}\int_{D''}\left(\int_{D'} \left|f_{I}\left(z', z''\right)\right|^r\, dV\left(z'\right)\right)^{p/r}dV(z')
\end{equation}
where $dV(z')=dV\left(z_{i_1}\right)\wedge\dots\wedge dV\left(z_{i_l}\right)$ and $dV(z'')=dV\left(z_{j_1}\right)\wedge\dots\wedge dV\left(z_{j_{n-l}}\right)$. \end{theorem}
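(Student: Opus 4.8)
\emph{Proof proposal.} The statement has two parts: the equation $\dbar(Tf)=f$, and the quantitative bound \eqref{E:fancyLp}. For the first part the plan is to reduce to Theorem \ref{main1}. All that is needed is that hypothesis (H) forces $f\in\cb$, i.e. $f_I\in L^1(D)$ for every nonempty $I$. Fixing such an $I$ and writing $D=D'\times D''$ as in the statement, Hölder's inequality on the bounded domain $D'$ (using $r\ge1$) gives $\int_{D'}|f_I(z',z'')|\,dV(z')\le C\big(\int_{D'}|f_I(z',z'')|^r\,dV(z')\big)^{1/r}$; by (H) the right side lies in $L^p(D'')$, hence, since $D''$ is bounded and $p\ge1$, in $L^1(D'')$. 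Fubini then yields $f_I\in L^1(D)$, so $f\in\cb$ and Theorem \ref{main1} applies to give $\dbar(Tf)=f$ weakly.

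For the estimate I would first reduce to a single summand. Applying the triangle inequality in $L^p(D)$ to \eqref{T},
\[
\|Tf\|_{L^p(D)}\le\sum_{\emptyset\neq I\subset\{1,\dots,n\}}\big\|\bm{C}^{I}(f_I^{I^c})\big\|_{L^p(D)},
\]
and since there are only finitely many subsets it suffices to bound each term by $C\big(\int_{D''}(\int_{D'}|f_I|^r\,dV(z'))^{p/r}\,dV(z'')\big)^{1/p}$; summing and using $(\sum a_I)^p\le C\sum a_I^p$ for finitely many nonnegative $a_I$ then yields \eqref{E:fancyLp}. (As in Proposition \ref{P:bounded}, the superscript $I^c$ on the merely-$L^r$ function $f_I$ is interpreted via the same Fubini argument.) To estimate a fixed term, write the $L^p(D)$-norm as an iterated integral with the $z''$-variables on the outside, and observe that for each fixed $z''\in D''$ the map $z'\mapsto\bm{C}^{I}(f_I^{I^c})(z',z'')$ is the $l$-fold iterated solid Cauchy transform, in the variables indexed by $I$, of $z'\mapsto f_I(z',z'')$ on $D'$. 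The key input is that this iterated transform maps $L^r(D')\to L^p(D')$ boundedly with a constant independent of $z''$: indeed $\bm{C}^{I}$ is convolution on the bounded set $D'$ against the kernel $\prod_{m=1}^l|\zeta_{i_m}-z_{i_m}|^{-1}$, which belongs to $L^s$ of any bounded subset of $\C^l$ for every $s\in[1,2)$, so Young's inequality gives the bound precisely when $1/r+1/s=1+1/p$ is solvable with $s\in[1,2)$, i.e. exactly when $r>\max\{2p/(p+2),1\}$. (This is the content of the one-dimensional Lemmas \ref{Lp1dim} and \ref{Linfty1dim}, iterated; for $p=\infty$ the inner integral is replaced by a supremum and one uses $r>2$.) Applying this for each $z''$, raising to the $p$-th power, and integrating over $z''\in D''$ gives the bound on the single term.

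The step I expect to require the most care is the bookkeeping around the iterated transform: keeping the constant in the $L^r(D')\to L^p(D')$ estimate uniform over the frozen variables $z''$, and composing the single-variable transforms without losing integrability at intermediate stages — for which the ``product kernel plus Young'' viewpoint above is cleaner than iterating a one-variable $L^r\to L^p$ bound, since the product kernel still lies in $L^s$ for all $s<2$ regardless of $l$. One must also treat the $p=\infty$ endpoint and the a.e.\ definition of $f_I^{I^c}$ exactly as in Proposition \ref{P:bounded}. Everything else — Hölder, Fubini, the triangle inequality — is routine.
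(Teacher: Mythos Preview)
Your proposal is correct, and for the multi-index terms you actually take a cleaner route than the paper. The paper reduces to the one-dimensional Lemma \ref{Linfty1dim} and then, for $|I|>1$, \emph{iterates} that lemma one variable at a time, using Minkowski's integral inequality (with exponent $p/r\ge 1$) at each stage to interchange the order of integration before the next application; this is why the paper spells out the $n=2$ case in detail and singles out the term $\bm{C}^{\{1,2\}}$ for separate treatment. You instead observe that the full product kernel $\prod_{m=1}^l|\zeta_{i_m}-z_{i_m}|^{-1}$ lies in $L^s$ of any bounded subset of $\C^l$ for every $s<2$, and apply Young's inequality once on $D'$. This is shorter, avoids the Minkowski step entirely, and makes transparent why the threshold $r>2p/(p+2)$ is independent of $|I|$. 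The paper's approach has the virtue of being strictly modular (it uses the one-variable lemma as a black box), but yours is the more direct argument. One small imprecision: your claim that ``$1/r+1/s=1+1/p$ is solvable with $s\in[1,2)$ exactly when $r>\max\{2p/(p+2),1\}$'' tacitly needs $r\le p$ (to keep $s\ge1$); the case $r>p$ follows immediately from $r=p$ by H\"older on the bounded domain $D'$, and in fact the paper's proof relies on $p/r\ge1$ in the same way.
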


\begin{remark}
(a) When $D''=\emptyset$, i.e. for the term in the sum corresponding to the full set $J=\{1,\dots ,n\}$, the meaning of the double integral on
the right-hand side of  \eqref{E:fancyLp} is simply $\left\|f_{J}\right\|_{L^r(D)}$ and the assumption (H) means $\left\|f_{J}\right\|_{L^r(D)}<\infty$.
\smallskip

(b) Consider the case $p=\infty$. Then $r$ is any number $>2$, hypothesis $(H)$ changes to
\begin{equation*}
\sup_{z''\in D''}\int_{D'} \left|f_{I}\left(z', z''\right)\right|^r\, dV(z') <\infty,
\end{equation*}
and conclusion \eqref{E:fancyLp} becomes
\begin{equation*}
\left\|Tf\right\|_{L^\infty (D)}\leq C \sum_{\emptyset\neq I\subset\{1,\dots,n\}}\,\sup_{z''\in D''} \left(\int_{D'} \left|f_{I}\left(z', z''\right)\right|^r dV\left(z'\right)\right)^{1/r}dV(z').
\end{equation*}
\end{remark}

\begin{proof}[Proof of Theorem \ref{T:Lp}]
Since $D$ is bounded, the hypothesis implies $f_I\in L^1(D)$ and thus $f\in \cb$. Therefore $\dbar(Tf)=f$ holds weakly by Theorem \ref{main1}. 

The  $L^p(D)$ bound on $Tf$  follows because the $L^p$ norm can be evaluated iteratively. First note
\begin{align}\label{E:Lpstep1}
\left\|T(f)\right\|_{L^p(D)}&=\left\|\sum_{\emptyset\neq I\subset\{1,\dots,n\}}\bm{C}^{I}(f_{I}^{I^c})\right\|_{L^p(D)} 
\leq \sum_{\emptyset\neq I\subset\{1,\dots,n\}}\left\|\bm{C}^{I}(f_{I}^{I^c})\right\|_{L^p(D)}
\end{align}
by the ordinary Minkowski inequality. 

Lemma \ref{Linfty1dim} and Minkowski's integral inequality can be used to show each
term in the sum \eqref{E:Lpstep1} is bounded by the right hand side of  \eqref{E:fancyLp}.
This estimation is straightforward, but tedious to notate in arbitrary dimension. Details are given for $n=2$, which contains all steps needed for the general case.

When $n=2$, $D=D_1\times D_2$ and there are three terms in the sum \eqref{E:Lpstep1} -- 
\begin{equation}\label{E:MinSum1}
\bm{C}^{\{1\}}\left(f_{\{1\}}^{\{2\}}\right) + \bm{C}^{\{2\}}\left(f_{\{2\}}^{\{1\}}\right) + \bm{C}^{\{1,2\}}\left(f_{\{1,2\}}\right). 
\end{equation}
If $f=f_1 d\bar z_1 +f_2 d\bar z_2$, recall that
$f_{\{1\}}^{\{2\}} =f_1^{\{2\}}$ and $f_{\{2\}}^{\{1\}}=f_2^{\{1\}}$, while $f_{\{1,2\}}=\frac{\partial f_1}{\partial\bar z_2}$ ($=\frac{\partial f_2}{\partial\bar z_1}$, since $\dbar f=0$). Hypothesis (H) becomes three conditions:
\begin{itemize}
\item[(i)] $\left(\int_{D_1}\left|f_1(\zeta_1, z_2)\right|^r\, dV(\zeta_1)\right)^{1/r}\in L^p\left(D_2\right),$
\medskip
\item[(ii)] $\left(\int_{D_2}\left|f_2(z_1,\zeta_2)\right|^r\, dV(\zeta_2)\right)^{1/r}\in L^p\left(D_1\right),$
\medskip
\item[(iii)] $\int_D \left|\frac{\partial f_1}{\partial\bar z_2}\right|^r\, dV(\zeta_1)\wedge dV(\zeta_2) <\infty.$
\end{itemize}

Consider the first term in \eqref{E:MinSum1},
$$\bm{C}^{\{1\}}\left(f_1^{\{2\}}\right)(z_1,z_2)=\frac 1{2\pi i}\int_{D_1}\frac {f_1(\zeta_1,z_2)\, dV(\zeta_1)}{\zeta_1-z_1}.$$
Lemma  \ref{Linfty1dim} implies
\begin{align*}\label{E:Minkow1}
\left\|\bm{C}^{\{1\}}\left(f_1^{\{2\}}\right)\right\|^p_{L^p(D_1)}\leq C \left(\int_{D_1}\left|f_1(z_1, z_2)\right|^r dV(z_1)\right)^{p/r}.
\end{align*}
Integrating both sides in $z_2$ over $D_2$ yields 
\begin{align*}
\left\|\bm{C}^{\{1\}}\left(f_1^{\{2\}}\right)\right\|_{L^p\left(D_1\times D_2\right)}^p \leq C\int_{D_2} \left(\int_{D_1}\left|f_1(z_1, z_2)\right|^r dV(z_1)\right)^{p/r} dV(z_2),
\end{align*}
which is finite by (i).
The same argument shows
\begin{equation*}
\left\|\bm{C}^{\{2\}}\left(f_2^{\{1\}}\right)\right\|_{L^p\left(D_1\times D_2\right)}^p \leq C \int_{D_1} \left(\int_{D_2}\left|f_2(z_1, z_2)\right|^r dV(z_2)\right)^{p/r} dV(z_1),
\end{equation*}
which is finite by (ii).
Thus the $L^p$ norm of the first two terms in \eqref{E:MinSum1} are bounded by the right-hand side of \eqref{E:fancyLp}.

The last term in in \eqref{E:MinSum1} involves the double Cauchy transform and is handled slightly differently. Lemma \ref{Linfty1dim} implies
\begin{align*}
\int_{D_1}\left|\bm{C}^{\{1,2\}}\left(f_{\{1,2\}}\right)(s_1,z_2)\right|^pdV(s_1)&=\int_{D_1}\left|\bm{C}^{\{1\}}\left(\bm{C}^{\{2\}}\left(f_{\{1,2\}}\right)\right)(s_1,z_2) \right|^p\, dV(s_1) \\
&\leq C\left(\int_{D_1}\left|\bm{C}^{\{2\}}\left(f_{\{1,2\}}\right)(z_1,z_2)\right|^rdV(z_1)\right)^{p/r}.
\end{align*}
Integrating both sides in $z_2$ over $D_2$ yields
\begin{equation*}
\left\|\bm{C}^{\{1,2\}}\left(f_{\{1,2\}}\right)\right\|^p_{L^p(D)}\leq C\int_{D_2}\left(\int_{D_1}\left|\bm{C}^{\{2\}}\left(f_{\{1,2\}}\right)(s_1,z_2)\right|^rdV(s_1)\right)^{p/r}dV(z_2).
\end{equation*}
Note $\frac pr \ge 1$. Raise both sides to the power $\frac rp$ and see
\begin{align*}
\left\|\bm{C}^{\{1,2\}}\left(f_{\{1,2\}}\right)\right\|^r_{L^p(D_1\times D_2)}&\lesssim\left(\int_{D_2}\left(\int_{D_1}\left|\bm{C}^{\{2\}}\left(f_{\{1,2\}}\right)(s_1,z_2)\right|^rdV(s_1)\right)^{p/r}dV(z_2)\right)^{r/p} \\
&\lesssim \int_{D_1}\left(\int_{D_2}\left|\bm{C}^{\{2\}}\left(f_{\{1,2\}}\right)(s_1,z_2)\right|^p dV(z_2)\right)^{r/p}dV(s_1)
\end{align*}
by Minkowski's integral inequality. Lemma \ref{Linfty1dim} says the last expression is
\begin{align*}
&\lesssim\int_{D_1}\left(\int_{D_2}\left|f_{\{1,2\}}(s_1,s_2)\right|^r dV(s_2)\right)dV(s_1) \\
&= \left\|f_{\{1,2\}}\right\|^r_{L^r(D_1\times D_2)}.
\end{align*}
Thus  $\left\|\bm{C}^{\{1,2\}}f_{\{1,2\}}\right\|_{L^p(D)}$ is bounded by the right-hand side of \eqref{E:fancyLp} as well.

\end{proof}

\begin{remark}\label{R:Lp}
Since $r\le p$, Theorem \ref{T:Lp} shows a ``gain'' in integrability, passing from $f$ to $Tf$. Moreover $r<<p$ as $p\to\infty$, which suggests  applications.

But Theorem \ref{T:Lp} differs from previous gain results on $\dbar$ in two respects: (i) derivatives $f_I$ appear, and (ii) the gain does not stem
from the $\dbar$-Neumann operator satisfying a subelliptic estimate.  See \cite{Krantz76} for  $L^p$ gains due to an $L^2$ subelliptic estimate.
\end{remark}

Removing the exponent $r$ from Theorem \ref{T:Lp} yields a simpler version of the basic $L^p$ estimate. The estimate illustrates how derivatives $f_{I}, |I|>1$ naturally bound $Tf$.

\begin{corollary}\label{C:Lp}
Let $p\in[1,\infty]$. Suppose $f_{I}\in L^p(D)$ for all $I\neq\emptyset$ and $\dbar f=0$. Then $\dbar(Tf)=f$ and $T$ satisfies
\[
\|T(f)\|_{L^p(D)}\le C\sum_{I\neq\emptyset}\|f_{I}\|_{L^p(D)}.
\]
\end{corollary}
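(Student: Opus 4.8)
The plan is to obtain this as a specialization of Theorem~\ref{T:Lp}, supplemented by Proposition~\ref{P:bounded} for the endpoint $p=1$. In every case, since $D$ is bounded we have $f_I\in L^p(D)\subset L^1(D)$ for all $I\neq\emptyset$, so $f\in\cb$ and Theorem~\ref{main1} already gives $\dbar(Tf)=f$ weakly; only the norm inequality remains. When $p=1$ the hypothesis says exactly that $f\in\cb$ with $\|f\|_{\cb}=\sum_{I\neq\emptyset}\|f_I\|_{L^1(D)}$, and Proposition~\ref{P:bounded} gives $\|Tf\|_{L^1(D)}\le C\|f\|_{\cb}$, which is the assertion.

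For $1<p<\infty$, fix an auxiliary exponent $r$ with $\max\{2p/(p+2),1\}<r\le p$; such an $r$ exists since $2p/(p+2)<p$ for every $p>0$ and $2p/(p+2)\le 1$ when $p\le 2$. For each nonempty $I$ write $D=D'\times D''$ as in Theorem~\ref{T:Lp}. Because $p/r\ge 1$ and $D'$ has finite volume, Jensen's inequality applied to the normalized measure on $D'$ yields, for a.e.\ $z''$,
\[
\left(\int_{D'}|f_I(z',z'')|^r\,dV(z')\right)^{p/r}\le C\int_{D'}|f_I(z',z'')|^p\,dV(z'),
\]
with $C$ depending only on $p,r$ and $D'$. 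Integrating in $z''$ over $D''$ shows hypothesis (H) of Theorem~\ref{T:Lp} holds and bounds the right-hand side of \eqref{E:fancyLp} by $C\sum_{I\neq\emptyset}\|f_I\|_{L^p(D)}^p$. Hence $\|Tf\|_{L^p(D)}^p\le C\sum_{I\neq\emptyset}\|f_I\|_{L^p(D)}^p$, and taking $p$-th roots together with the elementary bound of the $\ell^p$-norm by the $\ell^1$-norm of the finite family $\{\|f_I\|_{L^p(D)}\}$ gives the claim. For $p=\infty$, pick any $r>2$: since $f_I\in L^\infty(D)$ implies $\int_{D'}|f_I(z',z'')|^r\,dV(z')$ is bounded uniformly in $z''$ by a constant times $\|f_I\|_{L^\infty(D)}^r$, the $p=\infty$ forms of hypothesis (H) and of the conclusion (Remark~(b) following Theorem~\ref{T:Lp}) give $\|Tf\|_{L^\infty(D)}\le C\sum_{I\neq\emptyset}\|f_I\|_{L^\infty(D)}$.

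This deduction is essentially bookkeeping. The only points needing care are checking that the admissible range of $r$ is nonempty for every $p\in(1,\infty)$, and noticing that the two endpoints $p=1$ and $p=\infty$ must be handled via Proposition~\ref{P:bounded} and Remark~(b) respectively rather than through the body of Theorem~\ref{T:Lp}, whose proof tacitly requires $r\le p$ (impossible at $p=1$) and uses the reformulated statements at $p=\infty$.
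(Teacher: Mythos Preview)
Your argument is correct and follows the same route as the paper --- deduce the corollary from Theorem~\ref{T:Lp} --- but you work harder than necessary in the interior range. The paper's proof is one line: for any $p\in[1,\infty]$ one has $p>2p/(p+2)$, so simply take $r=p$ in Theorem~\ref{T:Lp}. With that choice the inner exponent $p/r$ equals $1$, the right-hand side of \eqref{E:fancyLp} becomes $C\sum_{I\neq\emptyset}\int_{D''}\int_{D'}|f_I|^p\,dV=C\sum_{I\neq\emptyset}\|f_I\|_{L^p(D)}^p$ by Fubini, and no Jensen step is needed. Your choice of an intermediate $r<p$ followed by Jensen is valid but redundant.

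Your separate treatment of the endpoints is actually more careful than the paper. At $p=1$ the stated hypothesis of Theorem~\ref{T:Lp} requires $r>1$, while the Minkowski step in its proof uses $p/r\ge1$, so no admissible $r$ exists as written; invoking Proposition~\ref{P:bounded} (or observing that the proof of Theorem~\ref{T:Lp} in fact goes through with $r=1$ via Lemma~\ref{Linfty1dim}) is the honest fix. At $p=\infty$ the choice $r=p$ is not literally meaningful as an integral exponent, and your appeal to Remark~(b) with finite $r>2$ is the correct reading. So: same idea, cleaner execution available for $1<p<\infty$, and your endpoint bookkeeping is an improvement.
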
 

\begin{proof} For any $1\le p\le \infty$, it holds that $p>2p/(p+2)$. Choosing $r=p$ in Theorem \ref{T:Lp} yields the stated conclusion.

\end{proof}


\section{Higher dimensional factors: $L^p$ estimates}
\label{S:general}

In this section, the operator in Definition \ref{D:T} is extended to a solution operator for $\dbar$ on product domains with higher dimensional factors.

\subsection{Alternate expression in two dimensions} 
The key observation is seen by rewriting  $T$ on $D=D_1\times D_2$  with one-dimensional factors. 

If $f=f_1 d\bar z_1 +f_2 d\bar z_2\in\cb$, Theorem \ref{main1} says
\begin{align}\label{E:alt}
T(f)&=-\bm{C}^{\{1\}}\left(f_{\{1\}}^{\{2\}}\right)-\bm{C}^{\{2\}}\left(f_{\{2\}}^{\{1\}}\right)-\bm{C}^{\{1,2\}}\left(f_{\{1,2\}}\right)\notag \\
&= -\bm{C}^{\{1\}}\left(f_{1}^{\{2\}}\right)-\bm{C}^{\{2\}}\left(f_{2}^{\{1\}}\right)-\bm{C}^{\{1,2\}}\left(\frac{\partial f_1}{\partial\bar z_2}\right), 
\end{align}
solves $\dbar(Tf)=f$.
The second equality unravels the subscripts. Recall the superscripts in $f_1^{\{2\}}, f_2^{\{1\}}$ 
indicate evaluation, e.g., if $f_1\in C(\overline D)$
$$\bm{C}^{\{1\}}\left(f_1^{\{2\}}\right)(z_1,z_2)=\frac 1{2\pi i}\int_{D_1}\frac {f_1(\zeta_1,z_2)\, d\bar\zeta_1\wedge d\zeta_1}{\zeta_1-z_1}.$$

 Let $\dbar_j$ be the $\dbar$-operator in the variable $z_j$: $\dbar_j u = \frac{\partial u}{\partial\bar z_j} d\bar z_j$
for $u=u(z_1,z_2)\in C^1(D)$. Thus
$\dbar u= \dbar_1 u +\dbar_2 u$. If $S$ is an operator acting on functions, define $S$ on a $(0,1)$-form, such as $f$, by $S(f)= S(f_1) +S(f_2)$.

Suppose $g\in C(\overline{D})$. Theorem \ref{integralformula} with $n=1$ says
\[
S_j(g)=-\bm{C}^{\{j\}}(g)=\frac{-1}{2\pi i}\int_{D_j}\frac{g\,d\bar\zeta_j\wedge d\zeta_j}{\zeta_j-z_j}
\]
solves the $\dbar$-equation $\dbar_j (S_jg)= g d\bar z_j$ on $D_j$ for $j=1,2$. Thus \eqref{E:alt}  can be written
\begin{align}\label{E:alt2}
T(f)
&=S_1\left(f^{\{2\}}_1\right)+S_2\left(f^{\{1\}}_2\right)+S_2\left(-S_1\left(f_{\{1,2\}}\right)\right)\notag\\
&=S_1\left(f^{\{2\}}_1\right)+S_2\left(f^{\{1\}}_2-S_1\left(\dbar_2\left(f_1^{\{2\}}\right)\right)\right)\notag\\
&=S_1\left(f^{\{2\}}_1\right)+S_2\left(f-\dbar S_1\left(f_1^{\{2\}}\right)\right).
\end{align}
The last equality -- which is the crucial observation -- holds since the operators $S_1$ and $\frac{\partial}{\partial\bar z_2}$ commute and $\dbar_1S_1(f_1)=f_1d\bar z_1$, yielding the cancellation inside the parentheses. 
Thus $Tf$ is written as the sum of solutions to $\dbar$ problems on the factors $D_1, D_2$. On the other hand, the data for these $\dbar$ problems involves more than restricting $f$ to the separate factors.

This turns out to hold in greater generality -- in particular when solution operators on the factors are not given by integrals.

\subsection{Inductive argument on factors}

The argument giving the last equality in  \eqref{E:alt2} is generalized to arbitrary products.

\subsubsection{Notation}
Let $D\subset \C^N$ be a product domain of the form $D=D_1\times\cdots\times D_k$, each $D_j\subset\C^{n_j}$ a bounded domain, $n_1+\cdots+n_k=N$. The coordinates on $D$ will be written in several
ways, depending on context. Define
\begin{align*}
\left(z_1,\dots z_N\right)=\left(\bm{z}^1,\dots ,\bm{z}^k\right) =\left(z_1^1,\dots , z^1_{n_1}, z^2_1,\dots , z^2_{n_2},\dots , z^k_1,\dots , z^k_{n_k}\right)
\end{align*}
where $\bm{z}^j=\left(z^j_1,\dots ,z^j_{n_j}\right)$ are the standard coordinates on $D_j$. The $\dbar$ operator on $D$ can be decomposed into sub-$\dbar$ operators; for $u\in C^1(D)$, define
\begin{align*}
\dbar u =\sum_{j=1}^{n_1}\frac{\partial u}{\partial \bar z^1_j}d\bar z^1_j+\dots + \sum_{j=1}^{n_k} \frac{\partial u}{\partial\bar z^k_j}d\bar z^k_j =:\dbar _1 u+\dots +\dbar_k u.
\end{align*}
If $f$ is a $(0,1)$-form on $D$, its components can be rearranged to define
\begin{align}\label{D:form_decomp}
f=\sum_{j=1}^{k}\left(\sum_{i=1}^{n_j}f_{i}^j\,d\bar z_{i}^j\right) =: \sum_{j=1}^k \bm{f}^j.
\end{align}
Note each $\bm{f}^j$ contains only the differentials $d\bar z^j_1,\dots ,d\bar z^j_{n_j}$ and is a well-defined $(0,1)$-form on $D_j$. However the components of $\bm{f}^j$ are functions of
the full set of variables $(z_1,\dots ,z_N)$, not only the variables $\left( z^j_1,\dots ,z^j_{n_j}\right)$. If $f$ is written as \eqref{D:form_decomp}, define a projection operator $\pi_j:L^p_{(0,1)}(D)\to L^p_{(0,1)}(D_j)$ by
$\pi_j\left(f\right)=\bm{f}^j$
for $j=1,\dots,k$.

It follows immediately that $\dbar f=0$ on $D$ implies $\dbar_j\left(\pi_j\left(f\right)\right)=0$ for any  $j=1\dots , k$. Notice however that the system of equations (*) $\dbar_j\left(\pi_j\left(f\right)\right)=0$ for all $j=1\dots , k$
does not imply that $\dbar f=0$. 
In particular, system (*) gives no information on the various $\dbar_i\left(\pi_j\left(f\right)\right), i\neq j$. 

It is useful to have notation for a derivative with respect to a variable comprising a vector $\bm{z}^j$ without specifying the individual variable. 
 For $u$ a function defined on $D$, let
$$\frac{\partial u}{\partial \bm{z}^j_*}=\frac{\partial u}{\partial z^j_m}\qquad \text{for a  single, unspecified}\quad m=1, \dots , n_j.$$
Thus $\frac{\partial u}{\partial \bm{z}^j_*}$ represents a class of derivatives: all singletons from the full set $\left\{\frac{\partial u}{\partial z^j_1}, \dots , \frac{\partial u}{\partial z^j_{n_j}}\right\}$ of first order partials with respect to the coordinates $\left(z^j_1,\dots ,z^j_{n_j}\right)$ that comprise $\bm{z}^j$. Similarly define $\frac{\partial u}{\partial\bm{\bar z}^j_*}$. When $u\notin C^1(D)$, these derivatives
are interpreted in the sense of distributions. Similar notation is used on $(0,1)$-forms; if $f$ is given by \eqref{D:form_decomp},
$$\left(\pi_j\left(f\right)\right)_* = f^j_m \qquad \text{for a single, unspecified}\quad m=1, \dots , n_j.$$

The natural extension  of subscripts \eqref{E:subl_form1} to higher dimensional factors  is relatively easy to express using the ${}_*$-notation. 

\begin{definition}
\label{fI}
Let $D\subset \C^N$ be a product domain of the form $D=D_1\times\cdots\times D_k$, where each $D_j\subset\C^{n_j}$ is a bounded domain. Let $f$ be a $(0,1)$-form on $D$ expressed as \eqref{D:form_decomp}.

For any $I=\{i_1,\dots,i_l\}\subset\{1,\dots ,k\}$ with $l=|I|\ge1$, define
\begin{equation}\label{E:subl_form2}
f_{I}=\frac{\partial^{l-1}\left(\pi_{i_1}\left(f\right)\right)_*}{\partial\bm{\bar z}_*^{i_2}\cdots\partial\bm{\bar z}_*^{i_l}}.
\end{equation}
\end{definition}

\begin{remark} This extends  \eqref{E:subl_form1} since, when each $D_j\subset \C$, only one choice occurs for each $\partial\bm{\bar z}_*^{i_m}$ and $\left(\pi_{i_1}\left(f\right)\right)_*=f_{i_1}$.
\end{remark}

When the components of $f$ are $\notin C^{k-1}(D)$, the derivatives $f_{I}$ in \eqref{E:subl_form2} are interpreted weakly, as before.

Conditions on $f_{I}$, such as $f_{I}\in X$ for some normed space $X$, will mean that {\it every} derivative in the form given by Definition \ref{fI} -- for all choices implicit in the subscripts $(\cdot)_*$ in the
numerator and denominator of \eqref{E:subl_form2} -- satisfies the condition. The norm $\|f_I\|_{X}$ is the sum of the norms of {\it all} derivatives in the form of \eqref{E:subl_form2}.

\subsubsection{Algebraic lemmas}

\begin{lemma}
\label{dbarcomp}
Let $f$ be a weakly $\dbar$-closed $(0,1)$-form on $D=D_1\times\cdots\times D_k$. If $f_I$ exist for all the indices $I\neq0$ in the weak sense, the $f_I$ is independent of the order of $\{i_1,\dots,i_l\}\subset\{1,2,\dots,k\}$.

Moreover for such $I=\{i_1,\dots,i_l\}$, the $(0,1)$-forms in the class
\[
\frac{\partial^{l-1} \pi_{i_1}(f)}{\partial\bm{\bar z}_{*}^{i_2}\cdots\partial\bm{\bar z}_{*}^{i_l}}
\]
are all $\dbar_{i_1}$-closed on $D_{i_1}$ in the weak sense. 
\end{lemma}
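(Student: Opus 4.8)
The plan is to reduce everything to the weak-closedness hypothesis $\dbar f = 0$ and the elementary fact that partial derivatives commute in the distributional sense. The two assertions are really one statement about how $\dbar$-closedness propagates through the operations in Definition \ref{fI}, so I would organize the proof around that single mechanism rather than treating the two claims separately.

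\textbf{Step 1: decode $\dbar f = 0$ into component equations.} Writing $f = \sum_j \bm f^j$ as in \eqref{D:form_decomp}, the condition $\dbar f = 0$ (weakly) is equivalent to the system of equations
\[
\frac{\partial f^a_i}{\partial \bar z^b_m} = \frac{\partial f^b_m}{\partial \bar z^a_i}
\]
holding distributionally for all pairs of barred variables. In particular, for indices $a \neq b$ this says that a barred derivative in the $\bm z^b$-variables applied to a component $f^a_i$ of $\pi_a(f)$ equals a barred derivative in the $\bm z^a$-variables applied to a component of $\pi_b(f)$. This is the only input from $\dbar$-closedness I will use.

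\textbf{Step 2: order-independence of $f_I$.} Fix $I = \{i_1,\dots,i_l\}$ and compare $f_I$ formed with $i_1$ distinguished against $f_I$ formed with, say, $i_2$ distinguished; by transitivity (swapping adjacent elements) it suffices to handle a transposition of the first two. Starting from $\partial^{l-1}(\pi_{i_1}(f))_*/(\partial \bm{\bar z}_*^{i_2}\cdots\partial\bm{\bar z}_*^{i_l})$, I peel off the $\partial/\partial\bm{\bar z}_*^{i_2}$ that sits outermost (legitimate since mixed weak partials commute), apply the Step 1 identity to turn $\partial(\pi_{i_1}(f))_*/\partial\bm{\bar z}_*^{i_2}$ into $\partial(\pi_{i_2}(f))_*/\partial\bm{\bar z}_*^{i_1}$ — here one must check the $(\cdot)_*$ slots match up, i.e.\ that the "unspecified" individual variables are consistent, which is exactly the content of the componentwise equations — and then reassemble with the remaining derivatives to get the expression with $i_2$ distinguished. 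Since every derivative involved is constant-coefficient, all the commuting is justified weakly by testing against $C_0^\infty$ functions.

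\textbf{Step 3: $\dbar_{i_1}$-closedness of the intermediate forms.} Given the class of $(0,1)$-forms $\omega := \partial^{l-1}\pi_{i_1}(f)/(\partial\bm{\bar z}_*^{i_2}\cdots\partial\bm{\bar z}_*^{i_l})$ on $D_{i_1}$: we know $\dbar_{i_1}(\pi_{i_1}(f)) = 0$ because $\dbar f = 0$ implies $\dbar_j(\pi_j(f)) = 0$ for every $j$ (noted in the text). Now $\omega$ is obtained from $\pi_{i_1}(f)$ by applying barred derivatives $\partial/\partial\bm{\bar z}_*^{i_m}$ in variables \emph{disjoint} from those of $D_{i_1}$; since $\dbar_{i_1}$ involves only $\bar z^{i_1}$-derivatives, it commutes with each such $\partial/\partial\bm{\bar z}_*^{i_m}$, hence $\dbar_{i_1}\omega = \partial^{l-1}(\dbar_{i_1}\pi_{i_1}(f))/(\partial\bm{\bar z}_*^{i_2}\cdots\partial\bm{\bar z}_*^{i_l}) = 0$ weakly. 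The main (modest) obstacle throughout is bookkeeping: making sure the $(\cdot)_*$ notation is handled honestly in Step 2 — that the componentwise $\dbar$-closed equations really do identify the right individual-variable slots so that the class of forms called $f_I$ is genuinely well-defined — rather than any analytic difficulty, since all commutations are of constant-coefficient operators and hence routine in the distributional sense.
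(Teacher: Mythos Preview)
Your proposal is correct and follows essentially the same approach as the paper's proof, which is extremely terse: the paper simply says the first claim ``follows directly from $\dbar f=0$'' and the second from ``the $\dbar$-closedness of $f$ and the fact that $\dbar_{i_1}$ and $\partial/\partial \bar z_j$ commute.'' Your Steps 1--3 just spell out those two sentences in detail, with the component identities $\partial f^a_i/\partial \bar z^b_m = \partial f^b_m/\partial \bar z^a_i$ making explicit what ``follows directly from $\dbar f=0$'' means.
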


\begin{proof}
The first claim follows directly from $\dbar f=0$. The second claim follows from the $\dbar$-closedness of $f$ and the fact that $\dbar_{i_1}$ and $\partial/\partial \bar z_j$ commute. 
\end{proof}

\begin{lemma}
\label{crossdbar}
Let $f$ be a weakly $\dbar$-closed $(0,1)$-form on $D=D_1\times\cdots\times D_k$. If $\pi_j(f)=0$ for some $j\in\{1,\dots,k\}$, then
\[
\frac{\partial(\pi_i(f))_*}{\partial\bm{\bar z}^{j}_*}=0\qquad\text{for any }i\neq j.
\]
\end{lemma}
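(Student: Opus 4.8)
The plan is to unwind what $\dbar f=0$ says at the level of the component functions and then extract the single coefficient equation that forces the claimed vanishing. Write $f=\sum_{j=1}^k \bm{f}^j = \sum_{j=1}^k\sum_{i=1}^{n_j} f^j_i\, d\bar z^j_i$ as in \eqref{D:form_decomp}. Expanding $\dbar f$ and collecting the coefficient of $d\bar z^a_p\wedge d\bar z^b_q$ (for a fixed pair of distinct differentials), the condition $\dbar f=0$ gives, weakly,
\[
\frac{\partial f^b_q}{\partial\bar z^a_p}=\frac{\partial f^a_p}{\partial\bar z^b_q}
\]
for all admissible index pairs. This is just the standard compatibility system for a closed $(0,1)$-form, rewritten in the block coordinates.

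Next I would specialize this to the hypothesis $\pi_j(f)=0$, i.e. $f^j_q\equiv 0$ for every $q=1,\dots,n_j$. Fix $i\neq j$ and pick any component $f^i_p$ of $\pi_i(f)$, and any barred variable $\bar z^j_q$ among the coordinates of $\bm{z}^j$. Applying the compatibility identity with the roles $a=i$, $p=p$, $b=j$, $q=q$ gives
\[
\frac{\partial f^i_p}{\partial\bar z^j_q}=\frac{\partial f^j_q}{\partial\bar z^i_p}=\frac{\partial 0}{\partial\bar z^i_p}=0
\]
in the weak sense. Since $p$ ranges over all of $1,\dots,n_i$ and $q$ over all of $1,\dots,n_j$, this is exactly the assertion that every derivative in the class $\partial(\pi_i(f))_*/\partial\bm{\bar z}^j_*$ vanishes, which is the statement of Lemma \ref{crossdbar}.

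The only point requiring a little care — and the one I would flag as the main (minor) obstacle — is the justification that the coefficient-wise compatibility identity, and its differentiated consequence, hold \emph{weakly}, since by hypothesis the components of $f$ need not be $C^1$. This is routine: $\dbar f=0$ weakly means $(f,\dbar^\star\varphi)=0$ for all smooth compactly supported $(0,2)$-test forms $\varphi$, and choosing $\varphi$ supported in a coordinate polydisc and proportional to $d\bar z^a_p\wedge d\bar z^b_q$ isolates precisely the scalar equation $\partial f^b_q/\partial\bar z^a_p=\partial f^a_p/\partial\bar z^b_q$ in the distributional sense; substituting $f^j_q=0$ then kills the right side. No regularity beyond $f_I\in L^1_{\mathrm{loc}}$ is used, which is consistent with the standing hypotheses. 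Everything else is bookkeeping with the block-index notation of Definition \ref{fI}.
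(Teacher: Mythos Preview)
Your proof is correct and follows essentially the same approach as the paper's: both arguments use that $\pi_j(f)=0$ kills all components $f^j_q$, then invoke the cross-derivative compatibility $\partial f^i_p/\partial\bar z^j_q=\partial f^j_q/\partial\bar z^i_p$ coming from $\dbar f=0$ to conclude. Your version is slightly more explicit about the weak-sense justification, which the paper handles implicitly.
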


\begin{proof}
The condition $\pi_j(f)=0$ implies that every component $f^j_m$ in the class $\left(\pi_j\left(f\right)\right)_*$ is identically 0. A fortiori 
\begin{equation}\label{E:helper}
\frac{\partial f^j_m}{\partial\bar z_{\ell}}=0\qquad\text{for all }\ell\in\{1,\dots,N\} \,\text{and } m\in\{1,\dots ,n_j\}.
\end{equation}
Let $i\neq j$ and consider a particular component of $\pi_i(f)_*$, say $f^i_n$ for $n\in\{1, \dots , n_i\}$. Since $\dbar f=0$, for any $\mu\in\{1,\dots ,n_j\}$
$$\frac{\partial f^i_n}{\partial\bar z^j_\mu} =\frac{\partial f^j_\mu}{\partial\bar z_\ell},$$
for some $\ell$. The conclusion follows from \eqref{E:helper}.
\end{proof}

\subsubsection{Existence and basic estimate} Suppose there are operators $T_j$ satisfying $\dbar_j\left(T_j\alpha_j\right)=\alpha_j$ when $\dbar_j\alpha_j=0$ on $D_j$, for each of the
factors in $D=D_1\times\cdots\times D_k$. Commutators of $T_j$ and barred derivatives {\it not} in directions $\bm{\bar z}^j$ -- i.e. derivatives in directions corresponding to $D_1,\dots,D_{j-1},D_{j+1},\dots,D_k$ -- arise in proving existence and regularity of a solution operator for $\dbar$ on $D$. 

When these commutators vanish, existence of the solution operator can be shown.  For clarity, an estimate needed in the proof is relegated to Proposition \ref{LpforhighT} below. 

\begin{proposition}
\label{highdimexistence}
Let $D=D_1\times\cdots\times D_k$, with $D_j\subset\C^{n_j}$, and $p\in[1,\infty]$.  

\begin{itemize}
\item[(C)] Assume on each factor there is a linear bounded operator $T_j:L^p_{(0,1)}(D_j)\to L^p(D_j)$ that solves the $\dbar_j$-equation on $D_j$ and commutes with  all the barred derivatives on $D_1,\dots,D_{j-1},D_{j+1},\dots,D_k$.
Generically denote these derivatives  $\partial/\partial\bm{\bar z}_*$.
\end{itemize}

Then there is a linear operator $T$ on
\[
\sB^p:=\{f\,\dbar\text{-closed}\,|\,f_I\in L^p(D)\text{ for all }I\neq0\}
\]
satisfying $\dbar T(f)=f$.
\end{proposition}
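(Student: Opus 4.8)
The plan is to construct $T$ by induction on the number of factors $k$, mimicking the computation \eqref{E:alt2} that was carried out when $k=2$ with one-dimensional factors. For $k=1$ the operator is just $T_1$ itself (applied componentwise to a $(0,1)$-form), and hypothesis (C) is vacuous. For the inductive step, suppose a solution operator $T'$ has been constructed on products of $k-1$ factors (satisfying the analogue of (C) and the estimate in Proposition \ref{LpforhighT}). Write $D = D_1 \times D'$ with $D' = D_2 \times \cdots \times D_k$, and split $f = \bm{f}^1 + g$, where $\bm{f}^1 = \pi_1(f)$ and $g = \sum_{j \ge 2} \bm{f}^j$. Since $\dbar f = 0$, Lemma \ref{dbarcomp} shows $\bm{f}^1$ is $\dbar_1$-closed on $D_1$ for each fixed value of the remaining variables, so $T_1(\bm{f}^1)$ makes sense and $\dbar_1 T_1(\bm{f}^1) = \bm{f}^1$. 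The candidate is
\[
T(f) = T_1\big(\bm{f}^1\big) + T'\big(f - \dbar\, T_1(\bm{f}^1)\big),
\]
where $T'$ acts on the $D'$-factors, in parallel with the last line of \eqref{E:alt2}.

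The key steps, in order, are: (i) verify that $h := f - \dbar\, T_1(\bm{f}^1)$ is a $(0,1)$-form whose $\bm{h}^1$-component vanishes — this is where the commutation hypothesis (C) for $T_1$ is used, exactly as in the remark following \eqref{E:alt2}: $\dbar_1 T_1(\bm{f}^1) = \bm{f}^1$ kills the $d\bar z^1$-part of $f$, while the $d\bar z^j$-parts ($j\ge 2$) of $\dbar T_1(\bm{f}^1)$ are $\frac{\partial}{\partial \bm{\bar z}^j_*} T_1(\bm{f}^1) = T_1\big(\frac{\partial}{\partial \bm{\bar z}^j_*}\bm{f}^1\big)$ because $T_1$ commutes with $\partial/\partial\bm{\bar z}_*$; (ii) check that $h$ is $\dbar$-closed — this follows since $\dbar h = \dbar f - \dbar\dbar T_1(\bm{f}^1) = 0$, so in particular $h$ restricted to $D'$ is $\dbar'$-closed and $T'(h)$ is defined once we know $h \in \sB^p$ for $D'$; (iii) verify $h_I \in L^p(D)$ for every $I \subset \{2,\dots,k\}$, $I \ne \emptyset$, so that $h$ lies in the space on which $T'$ is defined — here the derivatives $h_I$ are built from derivatives of $\bm{f}^1$ in the complementary directions, and the estimate from Proposition \ref{LpforhighT} applied to $T_1$ controls $\|(T_1\bm{f}^1)_\cdot\|_{L^p}$ in terms of the $\|f_J\|_{L^p}$, using also Lemma \ref{crossdbar} to see that terms with $\bm{h}^1 = 0$ contribute nothing; (iv) conclude $\dbar T(f) = f$: indeed $\dbar T(f) = \dbar T_1(\bm{f}^1) + \dbar T'(h)$, and since $\bm{h}^1 = 0$, $T'(h)$ only involves the $D'$-variables in its output and one checks $\dbar' T'(h) = h$ while the $\bm{\bar z}^1$-derivative of $T'(h)$ recovers the missing piece — again by the commutation property of $T'$ with $\partial/\partial\bm{\bar z}^1_*$ (part of the inductive hypothesis) together with $\dbar_1 T_1(\bm{f}^1) = \bm{f}^1$, so that the two terms reassemble to $f$.

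The main obstacle I expect is step (iv), the verification that $\dbar T(f) = f$ holds on the nose: one must track carefully how the mixed derivatives $\partial/\partial\bm{\bar z}^1_*$ act on $T'(h)$, using that $h$ (hence also its $T'$-image, by the commutation hypothesis built into the induction) behaves well under these derivatives, and that $\dbar_1$ applied to $T'(h)$ combined with $\dbar_1 T_1(\bm{f}^1) = \bm{f}^1$ exactly cancels the correction term $-\dbar T_1(\bm{f}^1)$ inside $h$. This is the higher-dimensional, $k$-fold iteration of the single cancellation that produced the last line of \eqref{E:alt2}, and the bookkeeping — making sure every barred derivative direction is accounted for and that no regularity is lost when passing the derivative inside $T'$ — is the delicate part. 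Secondary care is needed in (iii) to confirm that the finitely many derivatives $h_I$ are genuinely in $L^p$ rather than merely distributions; this is exactly what Proposition \ref{LpforhighT} (stated next, and provable independently) is designed to supply, so I would state and invoke it here and defer its proof, as the excerpt indicates.
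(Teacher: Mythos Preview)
Your inductive approach unfolds to exactly the paper's recursion: setting $g_1=f$, $v^j=T_j(\pi_j(g_j))$, $g_{j+1}=g_j-\dbar v^j$, the paper takes $T(f)=v^1+\cdots+v^k$ and verifies $\dbar T(f)=f$ by a telescoping sum, after first proving (as a lemma inside the proof, by induction on $j$) that $\pi_1(g_j)=\cdots=\pi_{j-1}(g_j)=0$. Your $h$ is their $g_2$, and your $T'(h)$ is their $v^2+\cdots+v^k$, so the two constructions coincide.

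One correction to your step (iv): the $\bm{\bar z}^1$-derivative of $T'(h)$ does not ``recover a missing piece'' --- it \emph{vanishes}. Since $\pi_1(h)=0$ and $\dbar h=0$, Lemma~\ref{crossdbar} gives $\partial(\pi_i h)_*/\partial\bm{\bar z}^1_*=0$ for every $i\ge 2$, so every surviving component of $h$ is annihilated by $\partial/\partial\bm{\bar z}^1_*$. Combined with the commutation of $T'$ with $\partial/\partial\bm{\bar z}^1_*$ (which you rightly note must be carried in the inductive hypothesis, or which follows directly since $T'$ is assembled from $T_2,\dots,T_k$ and each of these commutes with $\partial/\partial\bm{\bar z}^1_*$ by (C)), this gives $\dbar_1 T'(h)=0$. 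Hence
\[
\dbar T(f)=\dbar T_1(\bm f^1)+\dbar' T'(h)+\dbar_1 T'(h)=\dbar T_1(\bm f^1)+h+0=f.
\]
Once phrased this way, step (iv) is actually the cleanest of the four, not the most delicate; the paper's telescoping computation amounts to the same cancellation, and its internal lemma (parts (a)--(c)) plays precisely the role of your observations $\pi_1(h)=0$ and $\dbar_1 T'(h)=0$.
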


\begin{proof}
The proof proceeds by recursively updating the $L^p$ (0,1)-form data and solving $\dbar$-equations on the factors. The proof is slightly subtle.

Let $f\in\sB^p$ be fixed. As starting $\dbar$-data, take $\pi_1(f)$; note $\dbar_1\big(\pi_1(f)\big)=0$ by Lemma \ref{dbarcomp}. The initial $\dbar$-problem is $\dbar_1 u=\pi_1(f)$ on $D_1$. To
facilitate writing the recursion relations, define
\begin{equation*}
g_1=f\quad\text{and}\quad v^1=T_1\big(\pi_1(g_1)\big).
\end{equation*}
In $\pi_1(g_1)$, the variables $\left(\bm{\bar z}^2,\dots ,\bm{\bar z}^k\right)$ are fixed (or viewed as parameters); the operator $T_1$ acts only on the $\bm{\bar z}^1$ variables. By hypothesis, $\dbar_1 v^1=\pi_1(g_1)$. However $v^1$ also depends on the variables $\left(\bm{\bar z}^2,\dots ,\bm{\bar z}^k\right)$; subsequent $\dbar$-problems must account for extra terms created by $\dbar_\ell v^1$, $\ell\neq 1$. Thus define
$g_2=g_1-\dbar v^1$ and $v^2=T_2\big(\pi_2(g_2)\big)$.
Note the use of the full $\dbar$ on $v^1$. As before $\dbar_2\big(\pi_2(g_2)\big)=0$ and $\dbar_2 v^2=\pi_2(g_2)$ on $D_2$. A general recursion is now evident: for $j=3,\dots k$, define
\begin{equation}\label{E:recursion}
g_j=g_{j-1}-\dbar v^{j-1}\quad\text{and}\quad v^j=T_j\big(\pi_j(g_j)\big).
\end{equation}
Commutativity conditions (C) show that each $g_j$ belongs to $L^p_{(0,1)}(D)$; details of this are given in Proposition \ref{LpforhighT}. It follows that $\pi_j\left(g_j\right)\in L^p_{(0,1)}\left(D_j\right)$ and $T_j\big(\pi_j\left(g_j\right)\big)$ is well-defined.

Another consequence of (C) is needed to continue.

\begin{lemma}\label{L:vanishing} Assume the hypotheses of Proposition \ref{highdimexistence}.  For $j=2,\dots, k$
\begin{itemize}
\item[(a)] $\pi_1\left(g_j\right)=\dots =\pi_{j-1}\left(g_j\right)=0$.
\smallskip
\item[(b)] $\pi_1\left(\dbar v^j\right)=\dots =\pi_{j-1}\left(\dbar v^j\right)=0$.
\smallskip
\item[(c)] $\pi_k\left(g_k\right)= g_k$.
\end{itemize}
\end{lemma}

\begin{proof}[Proof of Lemma]
(a), (b), and (c) are proved together, by induction. For $j=2$, 
\begin{align*}
\pi_1(g_2)&=\pi_1(g_1)-\pi_1\left(\dbar v^1\right)=\pi_1(g_1)-\dbar_1 v^1\\
&= \pi_1(g_1)-\dbar_1\left(T_1\left(\pi_1\left(g_1\right)\right)\right)=0.
\end{align*}
For $2\le j<k$, assume $\pi_1(g_j),\dots,\pi_{j-1}(g_j)$ are all $0$. We claim this also holds for $j+1$. Let $\partial/\partial\bm{\bar z}_*$ be one of the barred derivatives on $D_1,\dots,D_{j-1}$. By  commutativity,
\[
\frac{\partial}{\partial\bm{\bar z}_*}v^j=\frac{\partial}{\partial\bm{\bar z}_*}T_j(\pi_j(g_j))=T_j(\frac{\partial}{\partial\bm{\bar z}_*}\pi_j(g_j)).
\]
Since $\pi_1(g_j),\dots,\pi_{j-1}(g_j)$ are all $0$ and $g_j$ is $\dbar$-closed, Lemma \ref{crossdbar} says the right hand side of the above equation is $0$. 
In particular this implies $\pi_1(\dbar v^j),\dots,\pi_{j-1}(\dbar v^j)$ are all $0$ as well. From the recursion $g_{j+1}=g_j-\dbar v^j$, it follows that $\pi_1(g_{j+1}) =\dots =\pi_{j-1}(g_{j+1})=0$. Finally $\pi_j(g_{j+1})=\pi_j(g_j)-\dbar_jv^j=0$,
since $v^j=T_j\left(\pi_j\left(g_j\right)\right)$. This proves the claim. Thus (a) holds.

However once (a) holds, $\pi_1\left(\dbar v^j\right)=\dots =\pi_{j-1}\left(\dbar v^j\right)=0$ necessarily follows, since it was an intermediate conclusion in the previous induction argument. Thus (b) holds. 
Finally, (c) holds since (a) implies $g_k=\sum_{j=1}^k\pi_j\left(g_k\right)=\pi_k(g_k)$. 
\end{proof}

To conclude the construction, let $T(f)=v^1+\cdots+v^k$. Clearly $T$ is linear on $\sB^p$. To verify $\dbar T(f)=f$,  compute
\begin{align*}
\dbar T(f)
&=\dbar(v^1+\cdots+v^k)\\
&=(g_1-g_2)+\cdots+(g_{k-1}-g_k)+(\dbar_1+\cdots+\dbar_k)v^k\\
&=g_1-g_k+\dbar_k(v^k)\\
&=f-g_k+\pi_k(g_k)=f.
\end{align*}
The second equality follows from the recursion \eqref{E:recursion}, the third and fourth equalities follow from Lemma \ref{L:vanishing}.
\end{proof}

\begin{remark}
\label{1dimcompresult}
If each $D_j$ is one-dimensional with piecewise $C^1$ boundary and $T_j=-\bm{C}$, $\bm{C}$ defined in \eqref{E:solidCT}, then $u=T(f)$ for the $T$ given in Proposition  \ref{highdimexistence} is exactly the solution in Theorem \ref{main1}.
\end{remark}

The $L^p$ estimate needed in the proof of Proposition  \ref{highdimexistence}, which also establishes $L^p$-regularity of $T$, is now proved.

\begin{proposition}\label{LpforhighT}
Let $D=D_1\times\cdots\times D_k$, with $D_j\subset\C^{n_j}$, and $p\in[1,\infty]$.  

Assume on each factor there is a linear bounded operator $T_j:L^p_{(0,1)}(D_j)\to L^p(D_j)$ that solves the $\dbar_j$-equation on $D_j$ and commutes with  all the barred derivatives in the variables on $D_1,\dots,D_{j-1},D_{j+1},\dots,D_k$. Generically denote these derivatives  $\partial/\partial\bm{\bar z}_*$.

Then for each $g_j$ defined in Proposition \ref{highdimexistence}, $g_j\in L^p_{(0,1)}(D)$. Furthermore,
there is a constant $C>0$ independent of $f\in\sB^p$ such that
\[
\|T(f)\|_{L^p(D)}\le C\sum_{|I|\neq0}\|f_I\|_{L^p(D)}.
\]
\end{proposition}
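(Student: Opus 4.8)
The plan is to run the same recursion \eqref{E:recursion} used in Proposition \ref{highdimexistence}, and to show by induction on $j$ that each $g_j$ lies in $L^p_{(0,1)}(D)$ with a norm controlled by $\sum_{I}\|f_I\|_{L^p(D)}$, where the relevant subsets $I$ are those ``supported'' in $\{j,j+1,\dots,k\}$ in a sense made precise below. The key structural fact, already extracted in Lemma \ref{L:vanishing}(a), is that $g_j = \sum_{\ell \ge j}\pi_\ell(g_j)$; so it suffices to estimate each surviving component $\pi_\ell(g_j)$, $\ell\ge j$. First I would record the base case: $g_1=f$, and $\|f\|_{L^p_{(0,1)}(D)}=\sum_\ell\|\pi_\ell(f)\|_{L^p(D_\ell)}$ is one of the terms $\sum_{|I|=1}\|f_I\|_{L^p(D)}$, so the claim holds trivially for $j=1$.

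For the inductive step, suppose $g_{j}\in L^p_{(0,1)}(D)$ with the stated bound. Then $v^j=T_j(\pi_j(g_j))$ is well-defined, and $\|v^j\|_{L^p(D)}\le C\|\pi_j(g_j)\|_{L^p(D_j)}$ by the boundedness of $T_j$ (integrating the $D_j$-estimate over the complementary variables using Fubini, exactly as in the proof of Proposition \ref{P:bounded}). Next I would express $g_{j+1}=g_j-\dbar v^j$ and bound its surviving components. For $\ell>j$, $\pi_\ell(g_{j+1})=\pi_\ell(g_j)-\dbar_\ell v^j$, and $\dbar_\ell v^j = \dbar_\ell T_j(\pi_j(g_j)) = T_j(\dbar_\ell \pi_j(g_j))$ by the commutativity hypothesis (C) --- here $\dbar_\ell$ is a barred derivative in the $D_\ell$-variables, $\ell\ne j$, so it commutes with $T_j$. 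Thus $\|\dbar_\ell v^j\|_{L^p(D)}\le C\|\dbar_\ell\pi_j(g_j)\|_{L^p(D)}$. Unwinding the recursion, $\pi_j(g_j)$ is itself a sum of terms of the form $T_{j-1}\cdots$ applied to barred derivatives of components of $f$; iterating the commutation of each $T_m$ past the barred $\partial$'s and invoking $\dbar f=0$ (Lemma \ref{dbarcomp}) to rewrite cross derivatives $\partial/\partial\bm{\bar z}^m_*$ of $\pi_i(f)$ as derivatives $\partial/\partial\bm{\bar z}^i_*$ of $\pi_m(f)$, one identifies every such composite term, up to bounded operators $T_m$, with some $f_I$ in the sense of Definition \ref{fI}. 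Since each $T_m$ is $L^p$-bounded, $\|\pi_\ell(g_{j+1})\|_{L^p}\le C\sum_{I}\|f_I\|_{L^p(D)}$ over the appropriate family of $I$'s, completing the induction. Summing over $\ell$ and then over $j$, and using $T(f)=\sum_j v^j$ with $\|v^j\|_{L^p(D)}\le C\|\pi_j(g_j)\|_{L^p(D_j)}\le C\sum_I\|f_I\|_{L^p(D)}$, yields
\[
\|T(f)\|_{L^p(D)}\le\sum_{j=1}^k\|v^j\|_{L^p(D)}\le C\sum_{|I|\neq 0}\|f_I\|_{L^p(D)}.
\]

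The bookkeeping --- matching each composite operator term arising from iterating \eqref{E:recursion} with the correct $f_I$ --- is the main obstacle; it is purely combinatorial but must be done carefully, since the order in which the $\dbar_\ell$'s are pushed through the $T_m$'s and the repeated use of $\dbar f=0$ to convert cross derivatives both need to be tracked. The case $p=\infty$ requires no change: the only analytic inputs are the boundedness of each $T_j$ on $L^p$ and Minkowski's inequality for the finite sum over $j$ and over the components, both valid for $p=\infty$, and the Fubini argument promoting a $D_j$-estimate to a $D$-estimate works verbatim since $D=D_j\times D''$ with $D''$ of finite volume is irrelevant once one works with the (essential) supremum.
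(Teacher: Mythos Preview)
Your proposal is correct and follows essentially the same approach as the paper: both arguments run the recursion \eqref{E:recursion}, use the commutativity hypothesis to pass $\partial/\partial\bm{\bar z}_*$ through $T_j$ so that $\|\partial v^j/\partial\bm{\bar z}_*\|_{L^p}\le C\|\partial\pi_j(g_j)/\partial\bm{\bar z}_*\|_{L^p}$, and then iterate to control everything by the $\|f_I\|_{L^p(D)}$. The paper writes out the $j=1,2$ steps explicitly (equations \eqref{v1}--\eqref{v^2Lp}) and then says ``the argument can be continued through the factors,'' while you package the same computation as an induction; the content is the same.
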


\begin{proof}
Let $f\in\sB^p$ and fix $\left(\bm{z}^2,\dots,\bm{z}^k\right)\in D_2\times\cdots\times D_k$. As in the proof of  Proposition  \ref{highdimexistence}, consider the $\dbar_1$-equation on $D_1$
\begin{equation}\label{v1}
\dbar_1v^1=\pi_1(g_1)=f^1_1\,d\bar z^1_1+\cdots+f^1_{n_1}\,d\bar z^1_{n_1}.
\end{equation}
Let $\partial/\partial\bm{\bar z}_*$ be an arbitrary barred derivative in the variables on $D_2,\dots,D_k$. Consider another $\dbar_1$-equation
\begin{equation}
\label{dv1}
\dbar_1w^1=\frac{\partial}{\partial\bm{\bar z}_*}\pi_1(g_1)=\frac{\partial f^1_1}{\partial\bm{\bar z}_*}\,d\bar z^1_1+\cdots+\frac{\partial f^1_{n_1}}{\partial \bm{\bar z}_*}\,d\bar z^1_{n_1},
\end{equation}
viewed as paired with \eqref{v1}.

By Lemma \ref{dbarcomp}, the right hand sides of \eqref{v1} and \eqref{dv1} are well-defined and $\dbar_1$-closed. By assumption, it holds that $v^1=T_1(\pi_1(g_1))$ solves \eqref{v1} and satisfies
\begin{equation}
\label{v^1Lp}
\|v^1\|_{L^p(D_1)}\le C\|\pi_1(g_1)\|_{L^p_{(0,1)}(D_1)}.
\end{equation}
On the other hand, $w^1=T_1\left(\partial/\partial\bm{\bar z}_*(\pi_1(g_1))\right)$ solves  \eqref{dv1}.
By commutativity, 
$$w^1=T_1(\partial/\partial\bm{\bar z}_*(\pi_1(g_1)))=\partial/\partial\bm{\bar z}_*(T_1(\pi_1(g_1)))=\partial/\partial\bm{\bar z}_*(v^1).$$ Therefore
\[
\left\|\frac{\partial v^1}{\partial \bm{\bar z}_*}\right\|_{L^p(D_1)}\le C\left\|\frac{\partial \pi_1(g_1)}{\partial \bm{\bar z}_*}\right\|_{L^p_{(0,1)}(D_1)}.
\]
Recall that $g_1=f\in\sB^p$. Taking $p$th powers and integrating over $D_2\times\dots\times D_k$, the above implies $g_2=g_1-\dbar v^1\in L^p_{(0,1)}(D)$. Moreover,
\begin{equation}
\label{ftildeB1}
\begin{split}
\left\|\pi_2(g_2)\right\|_{L^p_{(0,1)}(D_1)}
&\le\|\pi_2(g_1)\|_{L^p_{(0,1)}(D_1)}+\left\|\dbar_2v^1\right\|_{L^p_{(0,1)}(D_1)}\\
&\le C\left(\|\pi_2(g_1)\|_{L^p_{(0,1)}(D_1)}+\sum_{j=1}^{n_2}\left\|\frac{\partial \pi_1(g_1)}{\partial \bar z_j^2}\right\|_{L^p_{(0,1)}(D_1)}\right).
\end{split}
\end{equation}

Next, as in the proof of  Proposition  \ref{highdimexistence}, consider the $\dbar_2$ problem on $D_2$ 
\[
\dbar_2 v^2=\pi_2(g_2).
\]
By assumption $v^2=T_2(\pi_2(g_2))$ solves this equation and satisfies
\begin{equation}
\label{v^2Lp}
\left\|v^2\right\|_{L^p(D_2)}\le C\|\pi_2(g_2)\|_{L^p_{(0,1)}(D_2)}.
\end{equation}
Just as for $v^1$, it follows that
$$\left\|\frac{\partial v^2}{\partial \bm{\bar z}_*}\right\|_{L^p(D_2)}\le C\left\|\frac{\partial \pi_2(g_2)}{\partial \bm{\bar z}_*}\right\|_{L^p_{(0,1)}(D_2)},$$
where $\partial/\partial\bm{\bar z}_*$ now denotes an arbitrary barred derivative in the variables on $D_3, \dots,D_k$. This implies $g_3=g_2-\dbar v^2\in L^p_{(0,1)}(D)$.
Combining \eqref{v^1Lp}--\eqref{v^2Lp} and integrating over $D$ yields
\begin{align*}
\|v^1+v^2\|_{L^p(D)}
&\le C\left(\|\pi_1(g_1)\|_{L^p_{(0,1)}(D)}+\|\pi_2(g_2)\|_{L^p_{(0,1)}(D)}\right)\\
&\le C\left(\|\pi_1(f)\|_{L^p_{(0,1)}(D)}+\|\pi_2(f)\|_{L^p_{(0,1)}(D)}+\sum_{j=1}^{n_2}\left\|\frac{\partial \pi_1(f)}{\partial \bar z_j^2}\right\|_{L^p_{(0,1)}(D)}\right)\\
&\le C\left(\|f_{\{1\}}\|_{L^p(D)}+\|f_{\{2\}}\|_{L^p(D)}+\|f_{\{1,2\}}\|_{L^p(D)}\right).
\end{align*}

The argument can be continued through the factors. This gives $g_j\in L^p_{(0,1)}(D)$ for all $j$ and $T(f):=v^1+\cdots+v^k$ satisfies
\[
\|T(f)\|_{L^p(D)}\le C\sum_{|I|\neq0}\|f_I\|_{L^p(D)}.
\]
\end{proof}

\begin{remark}\label{R:explainC}
The fact that $T(f)$ solves $\dbar$ on $D$ only requires that $T_j$ commutes with  all $\partial/\partial\bm{\bar z}_*$  on $D_1,\dots,D_{j-1}$ (cf. the proof of Proposition \ref{highdimexistence}).  
Guaranteeing $g_j$ belongs to the specified $L^p_{(0,1)}(D)$ also requires that $T_j$ commutes with $\partial/\partial\bar z_*$ on $D_{j+1},\dots,D_k$ (cf. the proof of Proposition \ref{LpforhighT}). 

However, if a {\it specified} $p$ is changed to {\it some} $p\in(1,\infty)$, the requirement that $T_j$ commutes with $\partial/\partial\bm{\bar z}_*$ on $D_{j+1},\dots,D_k$ is redundant. One can show $\partial/\partial\bm{\bar z}_*(T_j(\pi_j(g_j)))$ is in $L^p(D)$ (hence so is $g_{j+1}$) using difference quotients; see Lemma \ref{differentialquotient} below.
\end{remark}

\subsubsection{Conclusion}

The following summarizes results of the previous section.

\begin{theorem}
\label{main2}
Let $p\in[1,\infty]$, $D=D_1\times\cdots\times D_k$, where each factor $D_j\subset\C^{n_j}$ is a bounded domain,  and $n_1+\cdots+n_k=N$. Consider the $\dbar$-equation on $D$
\begin{equation}
\label{dbaronD}
\dbar u=f,
\end{equation}
where $f$ is a weakly $\dbar$-closed $(0,1)$-form.

Assume for each factor $D_j$, the $\dbar_{j}$-equation is solvable by a linear bounded operator $T_j:L^p_{(0,1)}(D_j)\to L^p(D_j)$, that commutes with  all the barred derivatives on $D_1,\dots,D_{j-1},$ $D_{j+1},\dots,D_k$.

If $f_I\in L^p(D)$ for all $I\neq0$, then there exists a solution $u=T(f)$ of the equation \eqref{dbaronD} with the estimate
\begin{equation}
\label{Bestimate}
\|u\|_{L^p(D)}\le C\sum_{|I|\neq0}\|f_I\|_{L^p(D)}.
\end{equation}
The operator $T$ is linear and bounded from $\sB^p$ to $L^p(D)$.
\end{theorem}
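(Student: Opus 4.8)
The plan is to obtain Theorem \ref{main2} as essentially the conjunction of the two preceding propositions, with only a little bookkeeping. First I would note that the hypothesis stated here --- on each factor $D_j$ the $\dbar_j$-equation is solved by a linear bounded operator $T_j:L^p_{(0,1)}(D_j)\to L^p(D_j)$ commuting with every barred derivative in the variables on $D_1,\dots,D_{j-1},D_{j+1},\dots,D_k$ --- is exactly hypothesis (C), and that the space $\sB^p$ appearing in the statement is precisely the space on which the operator $T$ was constructed in Proposition \ref{highdimexistence}, namely the weakly $\dbar$-closed $(0,1)$-forms $f$ with $f_I\in L^p(D)$ for every $I\neq 0$.

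Given these identifications, Proposition \ref{highdimexistence} directly furnishes a linear operator $T$ on $\sB^p$ with $\dbar T(f)=f$; this yields the existence of the solution $u=T(f)$ to \eqref{dbaronD} together with the linearity of $T$. Next, for the estimate \eqref{Bestimate}, I would invoke Proposition \ref{LpforhighT}, which under the same hypotheses produces a constant $C>0$ --- independent of $f\in\sB^p$ --- such that $\|T(f)\|_{L^p(D)}\le C\sum_{|I|\neq 0}\|f_I\|_{L^p(D)}$. Since the natural norm on $\sB^p$ is $\sum_{|I|\neq 0}\|f_I\|_{L^p(D)}$ (the higher-dimensional counterpart of Definition \ref{Banach}), this is precisely the statement that $T:\sB^p\to L^p(D)$ is bounded. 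Thus every assertion of the theorem --- solvability, the equation $\dbar u=f$, the estimate, linearity, boundedness --- has been accounted for.

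I do not expect a genuine obstacle here; the single point that requires care is already handled inside Proposition \ref{LpforhighT}, namely that each intermediate form $g_j$ generated by the recursion \eqref{E:recursion} lies in $L^p_{(0,1)}(D)$, so that $\pi_j(g_j)\in L^p_{(0,1)}(D_j)$ and $T_j$ may legitimately be applied to it. This is why the existence construction and the $L^p$ bound are run in tandem rather than separately: the commutativity in (C) is used simultaneously to annihilate the cross-terms $\pi_i(\dbar v^j)$ for $i<j$ (through Lemma \ref{crossdbar} and Lemma \ref{L:vanishing}) and to bound $\|\partial v^j/\partial\bm{\bar z}_*\|_{L^p}$ by $\|\partial\pi_j(g_j)/\partial\bm{\bar z}_*\|_{L^p}$, which is exactly what keeps $g_{j+1}=g_j-\dbar v^j$ in $L^p_{(0,1)}(D)$ so that the induction can proceed through all $k$ factors.
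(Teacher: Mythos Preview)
Your proposal is correct and matches the paper's approach exactly: the paper presents Theorem \ref{main2} as a summary statement with no separate proof, since the existence claim is Proposition \ref{highdimexistence} and the estimate is Proposition \ref{LpforhighT}. Your remark about the well-definedness of the recursion (that $g_j\in L^p_{(0,1)}(D)$ must be verified via Proposition \ref{LpforhighT} so that $T_j$ can be applied) is also precisely the point the paper flags in the proof of Proposition \ref{highdimexistence}.
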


The commutativity assumption in Theorem \ref{main2} is rather mild. As a step towards seeing this, note 
\begin{lemma}\label{L:IntegralsCommute}
\label{integralcommute}
If each $T_j$ is an integral solution operator for $\dbar_j$ on $D_j$, then $T_j$ commutes with directional derivatives on the other factors.
\end{lemma}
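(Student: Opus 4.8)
The plan is to show directly that an integral solution operator on a factor $D_j$ commutes with any directional derivative $\partial/\partial\bar z_*$ taken in the variables of a complementary factor $D_i$, $i\neq j$, by differentiating under the integral sign. First I would fix notation: suppose $T_j$ is given by $T_j(\alpha)(\bm{z}^j) = \int_{D_j} K(\bm{z}^j,\bm{\zeta}^j)\,\alpha(\bm{\zeta}^j)$ for some kernel $K$ (a fixed differential form on $D_j\times D_j$), where $\alpha$ is a $(0,1)$-form on $D_j$ whose coefficients may also depend parametrically on the complementary variables $\bm{z}^i$, $i\neq j$. The key point is that the kernel $K(\bm{z}^j,\bm{\zeta}^j)$ depends \emph{only} on the variables of $D_j$, and not on the complementary variables; hence $\partial K/\partial\bar z_* = 0$ when $\partial/\partial\bar z_*$ is a barred derivative in a variable $z^i_m$ with $i\neq j$.

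The main steps, in order, are: (1) write out $T_j(\alpha)$ as an integral against a $\bm{z}^i$-independent kernel; (2) observe that for $\partial/\partial\bar z_*$ a barred derivative in a complementary variable, formally
\[
\frac{\partial}{\partial\bar z_*}\big(T_j\alpha\big) = \frac{\partial}{\partial\bar z_*}\int_{D_j} K\,\alpha = \int_{D_j} K\,\frac{\partial\alpha}{\partial\bar z_*} = T_j\Big(\frac{\partial\alpha}{\partial\bar z_*}\Big),
\]
since the kernel is annihilated by $\partial/\partial\bar z_*$; (3) justify the interchange of $\partial/\partial\bar z_*$ and $\int_{D_j}$ rigorously. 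For (3) I would use that the kernel $K$ is locally integrable on $D_j$ (it has at worst the mild Cauchy-type singularity on the diagonal of $D_j\times D_j$, which, as in Lemma \ref{L:smoothCT} and the one-variable lemmas in the Appendix, is integrable), while $\alpha$ and its $\bar z_*$-derivative lie in $L^p(D_j)$ for a.e.\ value of the complementary variables by Fubini; then a standard difference-quotient / dominated-convergence argument (see the difference-quotient method referenced in Remark \ref{R:explainC} and Lemma \ref{differentialquotient}) interchanges the limit defining $\partial/\partial\bar z_*$ with the integral. Since $\bar z_*$ is a \emph{constant-coefficient} first-order operator in a variable disjoint from the integration variable, this is exactly the classical differentiation-under-the-integral theorem.

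The one genuine subtlety — and the step I expect to require the most care — is the hypothesis regularity: $T_j$ is only assumed bounded on $L^p_{(0,1)}(D_j)$, so a priori $T_j\alpha$ need not be smooth enough in the complementary variables for $\partial/\partial\bar z_*(T_j\alpha)$ to make pointwise sense. The resolution is that the identity is meant in the weak (distributional) sense in the complementary variables: one tests against $\varphi\in C^\infty_0$ in the $\bm{z}^i$-variables and moves $\partial/\partial\bar z_*$ onto $\varphi$ on both sides, using Fubini to bring the $D_j$-integral outside and the $\bm{z}^i$-integration-by-parts inside. Because the kernel does not involve $\bm{z}^i$ at all, the two operations commute trivially at the level of the iterated integral, giving $\langle \partial/\partial\bar z_*(T_j\alpha),\varphi\rangle = \langle T_j(\partial/\partial\bar z_*\,\alpha),\varphi\rangle$ for all such $\varphi$, which is the assertion $[\partial/\partial\bar z_*,T_j]=0$. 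A brief remark should also note that this argument only uses that $K$ is independent of the complementary variables, so it applies verbatim to the Cauchy transform $\bm{C}$ and to the partial solid multi-Cauchy transforms $\bm{C}^I$ used earlier, which is precisely why hypothesis (C) was automatic for one-dimensional factors (cf.\ Remark \ref{1dimcompresult}).
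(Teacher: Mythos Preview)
Your proposal is correct and takes essentially the same approach as the paper: the paper's proof is a single sentence stating that the commutativity is ``a direct consequence of pairing weak derivatives with a test function and applying Fubini's theorem,'' which is exactly your step (3)'s distributional argument. Your write-up is considerably more detailed than the paper's, but the underlying idea---the kernel is independent of the complementary variables, so testing against $\varphi$ and using Fubini swaps $\partial/\partial\bar z_*$ with the $D_j$-integral---is identical.
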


\begin{proof}
This is a direct consequence of pairing weak derivatives with a test function and applying Fubini's theorem.
\end{proof}

Let $W^{k,p}(D)$ denote the usual $L^p$ Sobolev space of derivative order $k$: the measurable functions $f$
such that 
\begin{equation*}\label{D:LpSobolevNorm}
\norm{f}_{W^{k,p}(D)} = \left( \sum_{|\alpha|\le k} \int_{D} \left|\partial^\alpha f\right|^p\,dV \right)^{\frac1p}
\end{equation*} 
is finite, where derivatives are interpreted weakly. $W^{k,p}_{(0,1)}(D)$ denotes the $(0,1)$-forms with components in $W^{k,p}(D)$.

\begin{corollary}\label{C:Wkp}
Let $D=D_1\times\cdots\times D_k\subset\C^N$, with $D_j\subset\C^{n_j}$ bounded. Assume that each $D_j$ is strongly pseudoconvex with $C^2$ boundary. 

For $1\le p\le\infty$, there is a solution $u=T(f)$ of the equation \eqref{dbaronD} with $L^p$ estimate
\[
\|u\|_{L^p(D)}\le C_p\sum_{|I|\neq0}\|f_I\|_{L^p(D)},
\]
if the right hand side is finite. In particular, if $f\in W_{(0,1)}^{k-1,p}(D)$, then
\[
\|u\|_{L^p(D)}\le C_p\|f\|_{W_{(0,1)}^{k-1,p}(D)}.
\]
\end{corollary}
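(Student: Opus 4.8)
The plan is to deduce Corollary \ref{C:Wkp} directly from Theorem \ref{main2} by verifying that its hypotheses hold when each factor $D_j$ is strongly pseudoconvex with $C^2$ boundary. First I would recall the classical result that on a strongly pseudoconvex domain $D_j$ with $C^2$ boundary, there exist integral solution operators $T_j$ for the $\dbar_j$-equation that are bounded on $L^p_{(0,1)}(D_j)\to L^p(D_j)$ for all $p\in[1,\infty]$; this is exactly the content of \cite{Kerzman71, Lieb70, Ovrelid} (and is already cited for this purpose in the introduction). Since these $T_j$ are integral operators, Lemma \ref{L:IntegralsCommute} shows each $T_j$ commutes with all directional (in particular all barred) derivatives in the variables on the complementary factors $D_1,\dots,D_{j-1},D_{j+1},\dots,D_k$. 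Hence hypothesis (C) of Theorem \ref{main2} is satisfied, and applying that theorem gives a linear operator $T:\sB^p\to L^p(D)$ with $\dbar(Tf)=f$ and the estimate $\|T(f)\|_{L^p(D)}\le C_p\sum_{|I|\neq0}\|f_I\|_{L^p(D)}$ whenever the right-hand side is finite. This proves the first displayed inequality.

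For the second displayed inequality, the remaining point is purely bookkeeping on norms: one must check that $\sum_{|I|\neq0}\|f_I\|_{L^p(D)}\lesssim \|f\|_{W^{k-1,p}_{(0,1)}(D)}$. By Definition \ref{fI}, for $I=\{i_1,\dots,i_l\}\subset\{1,\dots,k\}$ with $|I|=l$, the function $f_I$ is $\partial^{l-1}(\pi_{i_1}(f))_*/\partial\bm{\bar z}_*^{i_2}\cdots\partial\bm{\bar z}_*^{i_l}$, i.e. a component of $f$ differentiated $l-1\le k-1$ times in barred directions, and (by the convention following Definition \ref{fI}) $\|f_I\|_{L^p(D)}$ is the sum over all admissible choices of the $(\cdot)_*$ slots. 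Each such term is of the form $\|\partial^\alpha f^j_m\|_{L^p(D)}$ with $|\alpha|\le k-1$, so it is dominated by $\|f\|_{W^{k-1,p}_{(0,1)}(D)}$; summing the finitely many subsets $I$ and finitely many choices of derivative slots absorbs everything into a constant $C_p$ depending only on $k$, $N$, and $p$. Combining this with the first inequality gives $\|u\|_{L^p(D)}\le C_p\|f\|_{W^{k-1,p}_{(0,1)}(D)}$, as claimed.

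I do not expect any serious obstacle here; the corollary is a packaging of earlier results. The one place to be slightly careful is the endpoint cases $p=1$ and $p=\infty$: I would note that the $L^p$-boundedness of the Henkin-type solution operators on strongly pseudoconvex domains is known across the full range $p\in[1,\infty]$ (the $p=\infty$ case is the classical sup-norm estimate of Henkin-Grauert-Lieb, $p=1$ follows by duality or directly from the kernel estimates), so that Lemma \ref{L:IntegralsCommute} and Theorem \ref{main2} apply verbatim for these endpoints as well. A minor secondary point is that Theorem \ref{main2} is stated for a fixed $p$, so I would simply invoke it once for the given $p$; no interpolation is needed.
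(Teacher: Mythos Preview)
Your proposal is correct and follows essentially the same route as the paper: invoke the classical $L^p$-bounded integral solution operators on each strongly pseudoconvex factor (the paper cites \cite{Ovrelid} specifically), use Lemma \ref{L:IntegralsCommute} to obtain the commutativity hypothesis, and apply Theorem \ref{main2}. Your additional bookkeeping for the $W^{k-1,p}$ inequality and your remarks on the endpoints $p=1,\infty$ are more detailed than the paper's proof, but the underlying argument is the same.
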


\begin{proof}
Theorem 6.1 and Proposition 6.4 in \cite{Ovrelid} give existence and regularity of a solution operator on each factor  $D_j$ of $D$. Since the solution operator on each $D_j$ is an integral operator, Lemma \ref{integralcommute} guarantees the commutativity 
needed to apply Theorem \ref{main2}.
\end{proof}


\section{Higher dimensional factors: $L^2$-Sobolev estimates}
\label{smoothregularity}

\subsection{Commutative lemmas}
The following lemma guarantees  functions constructed later are in the right space.

\begin{lemma}
\label{differentialquotient}
Let $D_1\subset\C^{n_1}$ and $D_2\subset\C^{n_2}$ be bounded domains, where $n_1$ and $n_2$ are positive integers. Assume $p\in(1,\infty)$. Let $T:L^p(D_1)\to L^p(D_1)$ be a bounded linear operator. Let $\partial_{\nu}$ be a directional derivative along the unit vector $\nu$ in $D_2$. If $g\in L^p(D_1\times D_2)$ and $\partial_{\nu}g\in L^p(D_1\times D_2)$, then $\partial_{\nu}T(g)\in L^p(D_1\times D_2)$ and
\[
\int_{D_1\times D_2}|\partial_{\nu}T(g)(z)|^p\,dV(z)\le C\int_{D_1\times D_2}|\partial_{\nu}g(z)|^p\,dV(z)
\]
for some $C>0$.
\end{lemma}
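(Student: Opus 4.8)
The plan is to prove this via the standard difference quotient characterization of $L^p$-Sobolev regularity (valid for $1<p<\infty$), combined with the boundedness of $T$ on $L^p(D_1)$ and the fact that $T$ acts only in the $D_1$ variables, hence commutes with translations in $D_2$. First I would recall the difference quotient setup: for $h>0$ small and $\nu$ a fixed unit vector in $\C^{n_2}\cong\R^{2n_2}$, define $\Delta_h^\nu g(z^1,z^2)=\big(g(z^1,z^2+h\nu)-g(z^1,z^2)\big)/h$, defined on the slightly shrunken domain $D_1\times D_2^{(h)}$ where $D_2^{(h)}=\{w\in D_2:\operatorname{dist}(w,bD_2)>h\}$. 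The classical fact (see, e.g., \cite[Chap. 5.8]{Evans98}) is that since $p\in(1,\infty)$ and $\partial_\nu g\in L^p(D_1\times D_2)$, the family $\{\Delta_h^\nu g\}_{0<h<h_0}$ is bounded in $L^p(D_1\times D_2^{(h_0)})$, with $\|\Delta_h^\nu g\|_{L^p}\le \|\partial_\nu g\|_{L^p(D_1\times D_2)}$; conversely, an $L^p$ bound on $\Delta_h^\nu g$ uniform in $h$ forces $\partial_\nu g\in L^p$ with the same bound.

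The key step is the commutation $\Delta_h^\nu\big(T(g)\big)=T\big(\Delta_h^\nu g\big)$. This holds because $T$ operates only on the $D_1$-variable while $\Delta_h^\nu$ is a finite difference purely in the $D_2$-variable: for a.e.\ fixed $z^2$, $T$ is applied to the slice $g(\cdot,z^2)\in L^p(D_1)$, and linearity of $T$ together with Fubini gives $T(g)(\cdot,z^2+h\nu)-T(g)(\cdot,z^2)=T\big(g(\cdot,z^2+h\nu)-g(\cdot,z^2)\big)$. One must first check that $g(\cdot,z^2)\in L^p(D_1)$ for a.e.\ $z^2$, which follows from $g\in L^p(D_1\times D_2)$ by Fubini, so $T(g)(z^1,z^2):=\big(T(g(\cdot,z^2))\big)(z^1)$ is well-defined a.e.\ and lies in $L^p(D_1\times D_2)$ with $\|T(g)\|_{L^p(D_1\times D_2)}\le C\|g\|_{L^p(D_1\times D_2)}$ (integrate the slice-wise bound $\|T(g(\cdot,z^2))\|_{L^p(D_1)}\le C\|g(\cdot,z^2)\|_{L^p(D_1)}$ in $z^2$). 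Then
\[
\|\Delta_h^\nu(T(g))\|_{L^p(D_1\times D_2^{(h_0)})}=\|T(\Delta_h^\nu g)\|_{L^p(D_1\times D_2^{(h_0)})}\le C\|\Delta_h^\nu g\|_{L^p(D_1\times D_2)}\le C\|\partial_\nu g\|_{L^p(D_1\times D_2)},
\]
where the middle inequality again applies the slice-wise $L^p$ bound for $T$ and integrates over $z^2$. Since this bound is uniform in $h$, the difference quotient criterion yields $\partial_\nu T(g)\in L^p(D_1\times D_2)$ with $\|\partial_\nu T(g)\|_{L^p}\le C\|\partial_\nu g\|_{L^p}$, which is the claim.

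The main obstacle, and the only place where $p\in(1,\infty)$ is essential, is the converse direction of the difference quotient characterization: uniform $L^p$ boundedness of $\{\Delta_h^\nu u\}$ implies $\partial_\nu u\in L^p$. This uses weak-$*$ or weak compactness of bounded sets in $L^p$ for $1<p<\infty$ (reflexivity), extracting a weak limit along a subsequence $h_j\to0$ and identifying it with $\partial_\nu u$ by testing against $C^\infty_0$ functions and integrating by parts; the argument fails at $p=1$ (where $L^1$ is not a dual space and $\{\Delta_h^\nu u\}$ bounded in $L^1$ only gives $\partial_\nu u$ a measure) and is trivial at $p=\infty$. The rest — Fubini bookkeeping, the slice-wise boundedness of $T$, and the exhaustion handling the shrinking domain $D_2^{(h)}$ (one proves the bound on each $D_1\times D_2^{(h_0)}$ and lets $h_0\to0^+$ by monotone convergence) — is routine. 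No pseudoconvexity, no $\dbar$-structure, and no regularity of $T$ beyond $L^p$-boundedness is needed; the lemma is purely a functional-analytic transference statement.
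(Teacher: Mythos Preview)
Your proof is correct and follows essentially the same approach as the paper: both use the difference quotient characterization of $L^p$-Sobolev regularity (citing Gilbarg--Trudinger in the paper, Evans in your version), commute $\Delta_h^\nu$ past $T$ by linearity since $T$ acts only in the $D_1$ variables, apply the slice-wise $L^p$ bound for $T$, and then invoke the converse direction on compactly contained subdomains. The only cosmetic difference is that the paper works with arbitrary $D'\subset\subset D_1\times D_2$ and projects to $(D')_2$, while you work directly with $D_1\times D_2^{(h_0)}$; your remark on why $p\in(1,\infty)$ is essential (reflexivity for the weak-limit step) is a nice addition not made explicit in the paper.
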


\begin{proof}
Use notation $z=(z^1,z^2)\in D_1\times D_2$. For any function $h$ on $D_2$, define the difference quotient along the direction $\nu$ of size $\delta\neq0$ by
\[
\Delta^{\delta}_{\nu}h(z^2)=\frac{h(z^2+\delta\nu)-h(z^2)}{\delta},
\]
where $z^2,z^2+\delta\nu\in D_2$.

Since $g,\partial_{\nu}g\in L^p(D_1\times D_2)$, Fubini's theorem implies $g(z^1,\cdot),\partial_{\nu}g(z^1,\cdot)\in L^p(D_2)$ for a.e. $z^1\in D_1$. The smooth approximation arguments in 
\cite[Lemma 7.2, Lemma 7.3 and Theorem 7.9]{GilbargTrudinger} also work for directional derivatives (or non-isotropic Sobolev spaces---indeed, only one derivative in $L^p$ is considered). So \cite[Lemma 7.23]{GilbargTrudinger} also holds for $\partial_{\nu}g(z^1,\cdot)$, i.e. for a.e. $z^1\in D_1$
\begin{equation}
\label{D'2byD2}
\int_{D'_2}|\Delta^{\delta}_{\nu}g(z^1,z^2)|^p\,dV(z^2)\le\int_{D_2}|\partial_{\nu}g(z^1,z^2)|^p\,dV(z^2)
\end{equation}
for any $D'_2\subset\subset D_2$ satisfying $0<|\delta|<\text{dist}(D'_2,bD_2)$.

Since $g\in L^p(D_1\times D_2)$, $T(g)\in L^p(D_1\times D_2)$. For any $D'\subset\subset D_1\times D_2$ satisfying $0<|\delta|<\text{dist}(D',b(D_1\times D_2))$, if it holds that
\begin{equation}
\label{D'byD1D2}
\int_{D'}|\Delta^{\delta}_{\nu}T(g)|^p\,dV(z)\le C\int_{D_1\times D_2}|\partial_{\nu}g(z)|^p\,dV(z)
\end{equation}
for a constant $C>0$, then \cite[Lemma 7.24]{GilbargTrudinger} implies the conclusion.

To verify \eqref{D'byD1D2}, define $(D')_2=\{z^2\in\C^{n_2}\,|\,z=(z^1,z^2)\in D'\}$. Since $(D')_2$ is the image of a coordinate projection from $D'$ and such a projection is an open map, $(D')_2$ is a bounded domain. Moreover, $D'\subset D_1\times(D')_2$, $(D')_2\subset\subset D_2$, and $0<|\delta|<\text{dist}(D',b(D_1\times D_2))\le\text{dist}((D')_2,bD_2)$. So
\begin{align*}
\int_{D'}|\Delta^{\delta}_{\nu}T(g)|^p\,dV(z)
&\le\int_{(D')_2}\int_{D_1}|\Delta^{\delta}_{\nu}T(g)(z^1,z^2)|^p\,dV(z^1)\,dV(z^2)\\
&=\int_{(D')_2}\int_{D_1}|T(\Delta^{\delta}_{\nu}g)(z^1,z^2)|^p\,dV(z^1)\,dV(z^2)\\
&\le C\int_{(D')_2}\int_{D_1}|\Delta^{\delta}_{\nu}g(z^1,z^2)|^p\,dV(z^1)\,dV(z^2)\\
&\le C\int_{D_1\times D_2}|\partial_{\nu}g(z)|^p\,dV(z).
\end{align*}
The first line follows from $D'\subset D_1\times(D')_2$ and Fubini's theorem; the second line follows from the linearity of $T$; the third line follows from the boundedness of $T$; and the last line follows from Fubini's theorem and \eqref{D'2byD2}.
\end{proof}

\begin{remark}
Lemma \ref{differentialquotient} says that even without commutativity assumptions on $T$, the norm of $\partial_{\nu}T(g)$ is still controlled by the norm of $\partial_{\nu}g$.
\end{remark}

Lemma \ref{differentialquotient} can be used to show commutativity between special solution operators for $\dbar_j$ on $D_j$ and differential operators along directions in other factors.

\begin{lemma}
\label{commutativity}
Let $p$, $D_1$, $D_2$, and $\partial_{\nu}$ be as in Lemma \ref{differentialquotient}. Let $T:L_{(0,1)}^p(D_1)\to L^p(D_1)$ be a bounded linear operator which solves the $\dbar_1$-equation on $D_1$. 

Assume there exists a projection operator $P:L^p(D_1)\to A^p(D_1):=L^p(D_1)\cap\co(D_1)$, which preserves $A^p(D_1)$. 

Then the bounded operator $S:=T-P\circ T:L_{(0,1)}^p(D_1)\to L^p(D_1)$ also solves the $\dbar_1$-equation on $D_1$. Moreover, if $g\in L^p_{(0,1)}(D_1\times D_2)$, $\partial_{\nu}g\in L^p_{(0,1)}(D_1\times D_2)$, and $\dbar_1(g)=0$, then $[\partial_{\nu},S](g)=0$, where $[\partial_{\nu},S]$ denotes the commutator of $\partial_{\nu}$ and $S$.
\end{lemma}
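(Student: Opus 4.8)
The plan is to prove the two assertions of Lemma \ref{commutativity} in order. First I would verify that $S=T-P\circ T$ still solves the $\dbar_1$-equation. Fix $g\in L^p_{(0,1)}(D_1)$ with $\dbar_1 g=0$; then $Tg\in L^p(D_1)$ and $\dbar_1(Tg)=g$. Since $P$ maps into $A^p(D_1)=L^p(D_1)\cap\co(D_1)$, the function $P(Tg)$ is holomorphic on $D_1$, hence $\dbar_1(P(Tg))=0$. Therefore $\dbar_1(Sg)=\dbar_1(Tg)-\dbar_1(P(Tg))=g-0=g$, and $S$ is bounded as the difference of two bounded operators. This step is routine.

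Next, the commutativity claim. Let $g\in L^p_{(0,1)}(D_1\times D_2)$ with $\partial_\nu g\in L^p_{(0,1)}(D_1\times D_2)$ and $\dbar_1 g=0$. I would first observe that, by Lemma \ref{differentialquotient} applied componentwise, $\partial_\nu(Tg)\in L^p(D_1\times D_2)$, and similarly $\partial_\nu(Sg)\in L^p(D_1\times D_2)$, so all the objects in the commutator are genuinely in the right space. Now compute $\dbar_1$ of $\partial_\nu(Sg) - S(\partial_\nu g)$. On one hand $\dbar_1\big(S(\partial_\nu g)\big)=\partial_\nu g$, using the first part of the lemma together with the fact that $\partial_\nu$ is a derivative in the $D_2$-variables, so it commutes with $\dbar_1$ and preserves $\dbar_1$-closedness: $\dbar_1(\partial_\nu g)=\partial_\nu(\dbar_1 g)=0$. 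On the other hand $\dbar_1\big(\partial_\nu(Sg)\big)=\partial_\nu\big(\dbar_1(Sg)\big)=\partial_\nu g$. Hence $h:=[\partial_\nu,S](g)=\partial_\nu(Sg)-S(\partial_\nu g)$ satisfies $\dbar_1 h=0$, i.e. $h(\cdot,z^2)\in A^p(D_1)$ for a.e.\ $z^2\in D_2$.

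It remains to show $h\equiv 0$. The key is to bring in $P$. Write $Sg=Tg-P(Tg)$ and similarly $S(\partial_\nu g)=T(\partial_\nu g)-P(T(\partial_\nu g))$. Because the range of $P$ consists of holomorphic functions and $h(\cdot,z^2)$ is itself holomorphic, one expects $h$ to be killed by the projection-type structure: more precisely, the natural argument is that $P$ acts as the identity on $\dbar_1$-closed elements of $L^p(D_1)$ (this is exactly the preservation of $A^p(D_1)$ together with $P$ being a projection). Decomposing $[\partial_\nu,S]$ via $S=(\mathrm{Id}-P)\circ T$, one writes
\[
[\partial_\nu,S]g=[\partial_\nu,(\mathrm{Id}-P)\circ T]g=(\mathrm{Id}-P)\big([\partial_\nu,T]g\big)-[\partial_\nu,P]\big(Tg\big),
\]
and the task reduces to analyzing each piece. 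The first term $(\mathrm{Id}-P)([\partial_\nu,T]g)$ is handled by noting $[\partial_\nu,T]g$ is $\dbar_1$-closed (same computation as above, since both $\partial_\nu(Tg)$ and $T(\partial_\nu g)$ solve $\dbar_1 v=\partial_\nu g$), hence lies in $A^p(D_1)$ slice-wise, hence is annihilated by $\mathrm{Id}-P$. For the second term, one argues $P$ commutes with $\partial_\nu$: since $P$ is built from integration against a Bergman-type kernel on $D_1$ (or, abstractly, since $P$ is a bounded operator on $L^p(D_1)$ acting only in the $D_1$-variables), the difference-quotient argument of Lemma \ref{differentialquotient} applies to $P$ as well and $[\partial_\nu,P]=0$. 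Combining, $[\partial_\nu,S]g=0$.

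The main obstacle is the last step: rigorously justifying that $\mathrm{Id}-P$ annihilates the $\dbar_1$-closed error terms and that $P$ commutes with $\partial_\nu$ — these are where the hypotheses ``$P$ is a projection onto $A^p(D_1)$'' and ``$P$ preserves $A^p(D_1)$'' and ``$P$ acts only in the $D_1$-variables'' must be used with care, and where measurability/Fubini subtleties (slices in $z^2$, difference quotients) enter. The $\dbar_1$-closedness bookkeeping is otherwise straightforward given Lemma \ref{differentialquotient} and the first half of the present lemma.
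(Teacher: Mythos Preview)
Your overall architecture is right and matches the paper: show $S$ solves $\dbar_1$, show $h:=[\partial_\nu,S](g)$ lies slice-wise in $A^p(D_1)$, then use the projection $P$ to kill it. Your algebraic decomposition $[\partial_\nu,(\mathrm{Id}-P)T]=(\mathrm{Id}-P)[\partial_\nu,T]-[\partial_\nu,P]T$ is a valid reorganization of the paper's computation (the paper instead applies $P$ directly to $h$, uses $P\circ S=0$, and pulls $P$ past $\partial_\nu$); either way the crux is establishing $[\partial_\nu,P]=0$.

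That is exactly where your argument has a genuine gap. You offer two justifications and neither works as stated. First, ``$P$ is built from integration against a Bergman-type kernel'' is not part of the hypotheses --- all you are given is an abstract bounded projection $P:L^p(D_1)\to A^p(D_1)$ preserving $A^p(D_1)$. Second, the difference-quotient argument of Lemma~\ref{differentialquotient} gives only the norm bound $\|\partial_\nu P(h)\|_{L^p}\lesssim\|\partial_\nu h\|_{L^p}$, not commutativity; see the remark immediately following that lemma. So you cannot conclude $[\partial_\nu,P]=0$ from Lemma~\ref{differentialquotient} alone.

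The missing idea, which the paper supplies, is that $P$ is \emph{automatically} an integral operator because its range is holomorphic. For fixed $z^1\in D_1$, the mean-value property on a small ball $B\subset D_1$ about $z^1$ combined with H\"older's inequality gives
\[
|P(h)(z^1)|\le C_{p,z^1}\|P(h)\|_{L^p(D_1)}\le C_{p,z^1}\|h\|_{L^p(D_1)},
\]
so $h\mapsto P(h)(z^1)$ is a bounded linear functional on $L^p(D_1)$. Duality (here $1<p<\infty$) then represents $P$ as integration against an $L^{p'}$ kernel. Now Lemma~\ref{integralcommute} (Fubini) yields $[\partial_\nu,P]=0$, and your remaining steps go through.
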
 

\begin{proof}
 $S$ is $L^p$-bounded since $P$ and $T$ are. Since  the range of $P$ is contained in $\co(D_1)$, $S$ solves the $\dbar_1$-equation on $D_1$ as well.

Note the conclusion of Lemma \ref{differentialquotient} holds with functions replaced by $(0,1)$-forms in the hypothesis. So $\partial_{\nu}S(g)\in L^p(D_1\times D_2)$. Since $\partial_{\nu}g\in L^p_{(0,1)}(D_1\times D_2)$, $S(\partial_{\nu}g)\in L^p(D_1\times D_2)$ as well. Therefore $[\partial_{\nu},S](g)\in L^p(D_1\times D_2)$.

Lemma \ref{dbarcomp} applies to $\partial_{\nu}g$, so $\partial_{\nu}g$ is $\dbar_1$-closed. Thus
\[
\dbar_1\circ[\partial_{\nu},S](g)=\dbar_1\left(\partial_{\nu}S(g)-S(\partial_{\nu}g)\right)=\partial_{\nu}\dbar_1S(g)-\partial_{\nu}g=0,
\]
which implies that $[\partial_{\nu},S](g)$ is holomorphic in $z^1$ on $D_1$. Hence $[\partial_{\nu},S](g)(\cdot,z^2)\in A^p(D_1)$ for a.e. $z^2\in D_2$.

On the other hand, $P(h)$ is holomorphic on $D_1$ for each $h\in L^p(D_1)$. Given $z^1\in D_1$, the mean-value property of holomorphic functions and Holder's inequality imply
\[
|P(h)(z^1)|=\left|\frac{1}{V(B)}\int_{B}P(h)(\zeta^1)\,dV(\zeta^1)\right|\le C_{p,z^1}\|P(h)\|_{L^p(D_1)}\le C_{p,z^1}\|h\|_{L^p(D_1)},
\]
where $B\subset D_1$ is a ball centered at $z^1$. This says the linear functional $l_{z^1}(h):=P(h)(z^1)$ is bounded on $L^p(D_1)$. By duality of $L^p(D_1)$, $P$ can be represented as an integral operator. Note that Lemma \ref{integralcommute} also applies to integral operator on $L^p$-functions. So $P$ commutes with $\partial_{\nu}$.

Note $P\circ S=P\circ T-P\circ P\circ T=0$. Since $P$ preserves $A^p(D_1)$, $P\circ[\partial_{\nu},S](g)=[\partial_{\nu},S](g)$ for a.e. $z^2\in D_2$. Therefore
\[
[\partial_{\nu},S](g)=P\circ[\partial_{\nu},S](g)=P\left(\partial_{\nu}S(g)-S(\partial_{\nu}g)\right)=\partial_{\nu}PS(g)-PS(\partial_{\nu}g)=0
\]
for a.e. $z^2\in D_2$. This completes the proof.
\end{proof}

\begin{corollary}
\label{solutioncommute}
Let $p$, $D_1$, and $D_2$ be as in Lemma \ref{differentialquotient}. For $j=1,2$, let $T_j:L^p_{(0,1)}(D_j)\to L^p(D_j)$ be a bounded linear operator, which solves the $\dbar_j$-equation on $D_j$, and let $P_j:L^p(D_j)\to A^p(D_j):=L^p(D_j)\cap\co(D_j)$ be a projection operator, which preserves $A^p(D_j)$. Then for $j=1,2$, $S_j:=T_j-P_j\circ T_j:L_{(0,1)}^p(D_j)\to L^p(D_j)$ also solves the $\dbar_j$-equation on $D_j$. Moreover, if $g:=\dbar_2\pi_1(f)=-\dbar_1\pi_2(f)\in L^p_{(0,2)}(D_1\times D_2)$, where $f\in L^p_{(0,1)}(D_1\times D_2)$ is $\dbar$-closed on $D_1\times D_2$, then $[S_1,S_2](g)=0$.
\end{corollary}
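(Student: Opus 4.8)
\emph{Plan.} The first assertion requires nothing new: $S_j=T_j-P_j\circ T_j$ is $L^p$-bounded because $T_j$ and $P_j$ are, and since $P_j$ has holomorphic range $\dbar_jS_j=\dbar_jT_j$, so $S_j$ solves the $\dbar_j$-equation on $D_j$ for $j=1,2$ — this is the first paragraph of the proof of Lemma \ref{commutativity}. For the commutator identity I would set $u:=S_1S_2(g)$ and $w:=S_2S_1(g)$, where the inner operator is applied to the mixed $(0,2)$-form $g$ slicewise in its own factor with the remaining differential carried along as a coefficient, and show $u=w$ in two steps.

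\emph{Step 1: show $u-w$ is holomorphic.} Since $\dbar f=0$, the form $g=\dbar_2\pi_1(f)=-\dbar_1\pi_2(f)$ is weakly $\dbar_1$- and $\dbar_2$-closed, so by Lemma \ref{dbarcomp} the slices of $g$ that $S_1$ and $S_2$ act on are $\dbar_1$- resp.\ $\dbar_2$-closed and $S_1(g),S_2(g)$ are well defined. First I would verify the intermediate claim that $S_2(g)$ is itself slicewise $\dbar_1$-closed on $D_1$ (and $S_1(g)$ slicewise $\dbar_2$-closed on $D_2$): since $\dbar_2S_2(g)$ recovers $g$ up to sign and $\dbar_1g=0$, one has $\dbar_2\big(\dbar_1S_2(g)\big)=-\dbar_1\big(\dbar_2S_2(g)\big)=0$, so $\dbar_1S_2(g)$ is slicewise holomorphic in $D_2$; applying $P_2$, which commutes with barred $D_1$-derivatives (it is an integral operator in the $D_2$-variables, cf.\ the proof of Lemma \ref{commutativity} and Lemma \ref{integralcommute}) and annihilates the range of $S_2$ since $P_2\circ S_2=0$, gives $P_2\big(\dbar_1S_2(g)\big)=\dbar_1\big(P_2S_2(g)\big)=0$, whence $\dbar_1S_2(g)=0$ because $P_2$ reproduces slicewise-holomorphic $L^p$ functions. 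Then, using that $S_1$ solves $\dbar_1$ and commutes with barred $D_2$-derivatives (Lemma \ref{commutativity}), together with the mirror facts for $S_2$, I would compute $\dbar_1u=S_2(g)=\dbar_1w$ and $\dbar_2u=S_1(g)=\dbar_2w$; hence $\dbar(u-w)=0$ and $u-w\in\co(D_1\times D_2)$, with $(u-w)(z^1,\cdot)\in A^p(D_2)$ for a.e.\ $z^1$ by Fubini.

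\emph{Step 2: show $u-w$ is killed by $P_2$.} From $P_1\circ S_1=0$ I get $P_1u=0$, and from $P_2\circ S_2=0$ together with the fact that $P_2$ (an integral operator in the $D_2$-variables) commutes with $S_1$, I get $P_2u=S_1\big(P_2S_2(g)\big)=0$; the mirror computations give $P_1w=P_2w=0$. So $P_2(u-w)=0$, while $P_2(u-w)=u-w$ because $P_2$ preserves $A^p(D_2)$ and $u-w$ is holomorphic in the $D_2$-slices. Therefore $u-w=0$, i.e.\ $[S_1,S_2](g)=0$.

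\emph{Main obstacle.} I expect the genuine difficulty to be the bookkeeping in Step 1 that keeps $S_2(g)$, $S_1(g)$, and the barred derivatives of them that appear inside the $L^p$ spaces where Lemma \ref{commutativity} and the reproducing property of $P_j$ apply: the hypothesis only supplies $g\in L^p_{(0,2)}(D)$, not $L^p$-control of barred derivatives of $g$ internal to a single factor, so $S_2$ cannot simply be commuted past a barred $D_1$-derivative landing on a slice of $g$. I would handle this exactly as in the proofs of Propositions \ref{highdimexistence} and \ref{LpforhighT}: the barred $D_2$-derivatives of $S_2$ of a slice that actually arise are literally slices of $g$ (since $\dbar_2$ of that quantity recovers the slice), $L^p$-membership of the barred $D_1$-derivatives comes from the difference-quotient estimate of Lemma \ref{differentialquotient} (which needs no commutativity), closedness of all differentiated data is Lemma \ref{dbarcomp}, and the spurious slicewise-holomorphic remainders are absorbed by $P_j$ — the same device used in Step 1. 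Alternatively, I would first prove the identity for smooth $\dbar$-closed $f$, where all commutations are immediate, and then pass to the limit using that $S_1S_2$ and $S_2S_1$ are bounded on $L^p_{(0,2)}(D)$ and that such $g$ are dense, in the spirit of the proof of Theorem \ref{main1}.
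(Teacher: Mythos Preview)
Your approach is essentially the paper's, but your Step 2 takes a slightly different route that deserves comment. Both you and the paper show $[S_1,S_2](g)$ is holomorphic in each set of variables separately (your Step 1 is the paper's first paragraph, invoking Lemma \ref{commutativity} to pass $\dbar_1$ through $S_2$). Where the arguments diverge is in killing the resulting bi-holomorphic function.

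You show $P_2(u-w)=0$ by computing $P_2u$ and $P_2w$ separately: $P_2w=P_2S_2S_1(g)=0$ is immediate from $P_2\circ S_2=0$, but for $P_2u=P_2S_1S_2(g)$ you invoke ``$P_2$ commutes with $S_1$'' to slide $P_2$ past $S_1$ and hit $S_2$. That commutation is \emph{not} in the paper: $P_2$ is integral in the $D_2$-variables, but $S_1=T_1-P_1T_1$ is an abstract bounded operator on $L^p(D_1)$, so Lemma \ref{integralcommute} does not literally apply. The claim is true (for $p\in(1,\infty)$ one can treat $P_2$ as a Bochner integral with values in $L^p(D_1)$ and use that bounded operators commute with Bochner integrals), but it requires justification you do not supply.

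The paper avoids this entirely: it applies $P_2P_1$ to $[S_1,S_2](g)$. Since the commutator is slicewise in $A^p(D_1)$ and $A^p(D_2)$, applying $P_1$ then $P_2$ reproduces it. Then $P_2P_1(S_1S_2g)=0$ because $P_1S_1=0$, and $P_2P_1(S_2S_1g)=P_1P_2(S_2S_1g)=0$ because $P_2S_2=0$ and $P_1,P_2$ commute---both being integral operators, Fubini suffices. So the only commutation needed is $P_1P_2=P_2P_1$, which is elementary. This is cleaner than your route.

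Your extra care on the intermediate claim that $S_2(g)$ is $\dbar_1$-closed (and symmetrically) is a point the paper glosses over with the bare citation of Lemma \ref{commutativity}; your argument via $P_2$ annihilating $\dbar_1S_2(g)$ is a reasonable way to address it, and your identification of the $L^p$-membership issue as the main obstacle is accurate. The density alternative you sketch at the end is the safest way to close that gap.
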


\begin{proof}
It suffices to show commutativity of $S_1$ and $S_2$ on $g$. Note that $[S_1,S_2](g)\in L^p(D_1\times D_2)$ and
\[
\dbar_1\circ[S_1,S_2](g)=\dbar_1(S_1S_2(g)-S_2S_1(g))=S_2(g)-S_2\dbar_1S_1(g)=0,
\]
since $\dbar_1$ commutes with $S_2$ by Lemma \ref{commutativity}. Thus $[S_1,S_2](g)(\cdot,z^2)\in A^p(D_1)$ for a.e. $z^2\in D_2$. Similarly, $[S_1,S_2](g)(z^1,\cdot)\in A^p(D_2)$ for a.e. $z^1\in D_1$.

Note that $P_j$ preserves $A^p(D_j)$ and $P_j\circ S_j=0$ for $j=1,2$. Since $P_1$ and $P_2$ can be represented as integral operators, $P_1\circ P_2=P_2\circ P_1$. Therefore
\begin{align*}
[S_1,S_2](g)
&=P_2P_1[S_1,S_2](g)=P_2P_1(S_1S_2(g)-S_2S_1(g))\\
&=-P_2P_1S_2S_1(g)=-P_1P_2S_2S_1(g)=0.
\end{align*}
\end{proof}

Summarizing the last two lemmas: if there exist $L^p$-bounded solution and projection operators, there exists an $L^p$-bounded solution that commutes with directional derivatives in other factors of the product domain. This can be applied to several different cases including non-pseudoconvex domains. Here are two examples when $p=2$.

\begin{example}
Let $D_1$, $D_2$, and $\partial_{\nu}$ be as in Lemma \ref{commutativity}. Let $T:L_{(0,1)}^2(D_1)\to L^2(D_1)$ be a bounded linear operator, which solves the $\dbar_1$-equation on $D_1$. Let $P:L^2(D_1)\to A^2(D_1)$ be the Bergman projection. Then the solution operator $S:=T-P\circ T$ is $L^2$-bounded on $D_1$ and satisfies $[\partial_{\nu},S](g)=0$ for any $g\in L^2_{(0,1)}(D_1\times D_2)$, $\partial_{\nu}g\in L^2_{(0,1)}(D_1\times D_2)$, and $\dbar_1(g)=0$.

In particular, if $\dbar_1$ has closed range, the existence of $T$ is guaranteed by taking $T$ to be the $L^2$-canonical solution operator for $\dbar_1$ on $D_1$. In this case $S=T$, since $P\circ T=0$.
\end{example}

\begin{example}
\label{Kohnsolution}
Let $D_1$, $D_2$, and $\partial_{\nu}$ be as in Lemma \ref{commutativity}. Assume further that $D_1$ is pseudoconvex. For $t>0$, let $\mu_t(z^1)=e^{-t|z^1|^2}$ be a weight on $D_1$. By Kohn's weighted $L^2$ theory \cite{Kohn73}, the weighted $L^2$-canonical solution $T_{\mu_t}:L_{(0,1)}^2(D_1,\mu_t)\to L^2(D_1,\mu_t)$. Let $P_{\mu_t}:L^2(D_1,\mu_t)\to A^2(D_1,\mu_t)$ be the weighted Bergman projection. Then $P_{\mu_t}\circ T_{\mu_t}=0$ and hence $S_{\mu_t}=T_{\mu_t}$.

Given $t>0$, the weight $\mu_t(z^1)\approx 1$ is comparable to a constant. So for each $t>0$, $L^2(D_1,\mu_t)=L^2(D_1)$ and $T_{\mu_t}$ and $P_{\mu_t}$ are $L^2$-bounded on the unweighted spaces. Therefore $[\partial_{\nu},T_{\mu_t}](g)=0$ for any $g\in L^2_{(0,1)}(D_1\times D_2)$, $\partial_{\nu}g\in L^2_{(0,1)}(D_1\times D_2)$, and $\dbar_1(g)=0$.
\end{example}

\subsection{Sobolev estimates}
The basic argument in Propositions \ref{highdimexistence}, \ref{LpforhighT} yields  $L^2$-Sobolev regularity of a solution operator for $\dbar$ on product spaces. Let $W^{s}(D)=W^{s,2}(D)$ denote the $L^2$-Sobolev space on $D$.

\begin{theorem}
\label{sobolev}
Assume that each factor of $D=D_1\times\cdots\times D_k$ is bounded pseudoconvex with $C^{\infty}$ boundary. 
For each $m\in\Z^+$,  there exists a bounded linear operator \newline $S_m:W_{(0,1)}^{m+k-1}(D)\cap\ker(\dbar)\to W^{m}(D)$ satisfying
\[
\dbar\big(S_m f\big)=f\quad\text{on } D,
\]
for all  $f\in W_{(0,1)}^{m+k-1}(D)$ satisfying $\dbar f=0$.
\end{theorem}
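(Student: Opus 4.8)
The plan is to run the same inductive-on-factors construction used in Propositions \ref{highdimexistence} and \ref{LpforhighT}, but now tracking $L^2$-Sobolev norms in place of $L^p$ norms. The essential point is that each factor $D_j$ is bounded pseudoconvex with $C^\infty$ boundary, so by Kohn's theory the $\dbar_j$-Neumann operator is globally regular and there is a solution operator $T_j$ for $\dbar_j$ on $D_j$ that is bounded $W^{s}_{(0,1)}(D_j)\cap\ker(\dbar_j)\to W^{s}(D_j)$ for every $s\ge 0$; moreover $T_j$ commutes with all barred (and unbarred) derivatives in the complementary variables $\bm{\bar z}_*$ of $D_1,\dots,D_{j-1},D_{j+1},\dots,D_k$, either because one can take $T_j$ to be an integral operator when $D_j$ is, e.g., strongly pseudoconvex (Lemma \ref{integralcommute}), or, in general, via the difference-quotient argument of Lemma \ref{differentialquotient}, which shows that tangential (complementary-factor) derivatives of $T_j(g)$ are controlled by the same derivatives of $g$ without any commutativity hypothesis. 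This supplies the analogue of hypothesis (C) in the Sobolev setting.

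With these $T_j$ in hand, I would define $S_m f = v^1+\cdots+v^k$ exactly by the recursion \eqref{E:recursion}: $g_1=f$, $v^j=T_j(\pi_j(g_j))$, $g_{j+1}=g_j-\dbar v^j$. The algebraic facts that make this solve $\dbar$ — namely $\pi_1(g_j)=\dots=\pi_{j-1}(g_j)=0$ and $\pi_k(g_k)=g_k$ (Lemma \ref{L:vanishing}) — are purely formal and carry over verbatim once one knows the intermediate forms lie in the relevant spaces, so $\dbar(S_m f)=f$ follows as in the proof of Proposition \ref{highdimexistence}. The real work is the bookkeeping of derivative loss: at stage $j$, $g_{j+1}=g_j-\dbar v^j$ involves $\dbar_\ell v^j$ for $\ell>j$, i.e. one complementary-factor derivative of $T_j(\pi_j(g_j))$; by the commutativity/difference-quotient bound this costs control of one more derivative of $\pi_j(g_j)$, hence of $g_j$. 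Iterating over $k$ factors, $g_k$ is controlled by $k-1$ complementary derivatives of $f$, and then $v^k=T_k(\pi_k(g_k))$ gains $m$ interior-to-$D_k$ derivatives from the $W^{m}$-regularity of $T_k$ provided $\pi_k(g_k)\in W^{m}_{(0,1)}(D_k)$ for a.e. slice, which holds once $f\in W^{m+k-1}_{(0,1)}(D)$. Carefully, one shows by induction on $j$ that $g_j\in W^{m+k-j}_{(0,1)}(D)$ with norm $\le C\|f\|_{W^{m+k-1}_{(0,1)}(D)}$, using at each step the global Sobolev estimate for $T_j$ on $D_j$ in the $\bm{z}^j$-variables together with the complementary-derivative estimates of Lemma \ref{differentialquotient} to integrate over the other factors; then $v^j\in W^{m}(D)$ for every $j$, and summing gives $S_m f\in W^{m}(D)$ with the asserted bound.

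The main obstacle — and the step deserving the most care — is precisely this mixed-norm derivative accounting. One must be sure that the $W^{s}(D_j)$-estimate for $T_j$, which a priori controls only derivatives in the $\bm{z}^j$-directions, can be combined with the complementary-direction difference-quotient estimates so that all mixed derivatives $\partial^\alpha$ with $|\alpha|\le m$ on $D$ are bounded; this requires commuting $\partial/\partial\bm{\bar z}_*$ (and $\partial/\partial\bm{z}_*$) past $T_j$ finitely many times, which is exactly what Lemma \ref{differentialquotient} licenses for a single derivative and what one iterates. A secondary point is that the $\dbar_j$-closedness needed to apply the Sobolev solution estimate for $T_j$ to $\pi_j(g_j)$ and to its complementary derivatives is supplied by Lemma \ref{dbarcomp} together with Lemma \ref{crossdbar}, just as in the $L^p$ proof; the global $C^\infty$-boundary hypothesis on each $D_j$ is what upgrades Kohn's interior regularity to the boundary and hence makes $T_j$ bounded on $W^{s}(D_j)$ for all $s$. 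Once the induction is set up with the loss of exactly one derivative per factor, the conclusion $S_m:W^{m+k-1}_{(0,1)}(D)\cap\ker\dbar\to W^{m}(D)$ drops out.
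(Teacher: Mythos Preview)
Your overall architecture matches the paper's: run the factor-by-factor recursion \eqref{E:recursion}, lose one Sobolev derivative at each of the $k$ steps, and sum. The bookkeeping $g_j\in W^{m+k-j}_{(0,1)}(D)$ is exactly the paper's recursion $\|g_j\|_{W^s}\le\|g_{j-1}\|_{W^s}+\|v^{j-1}\|_{W^{s+1}}$ combined with $\|v^j\|_{W^s}\le C\|g_j\|_{W^s}$.

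There is, however, a genuine gap in how you justify hypothesis (C). You write that commutativity of $T_j$ with complementary $\partial/\partial\bm{\bar z}_*$ follows ``in general, via the difference-quotient argument of Lemma \ref{differentialquotient}.'' It does not. Lemma \ref{differentialquotient} gives only the norm bound $\|\partial_\nu T_j(g)\|\le C\|\partial_\nu g\|$; it says nothing about $[\partial_\nu,T_j]$ vanishing. But the proof of Lemma \ref{L:vanishing} genuinely uses the identity $\partial/\partial\bm{\bar z}_*\, T_j(\pi_j(g_j))=T_j(\partial/\partial\bm{\bar z}_*\,\pi_j(g_j))$ to feed Lemma \ref{crossdbar} and conclude $\pi_1(g_j)=\dots=\pi_{j-1}(g_j)=0$. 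Without that vanishing you cannot conclude $\dbar(S_m f)=f$; this is precisely the content of Remark \ref{R:explainC}, which says difference quotients can replace commutativity only for the \emph{forward} factors $D_{j+1},\dots,D_k$ (membership of $g_{j+1}$ in the right space), not for the backward ones $D_1,\dots,D_{j-1}$ (the algebraic cancellation).

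The paper closes this gap not with Lemma \ref{differentialquotient} but with Lemma \ref{commutativity} and Example \ref{Kohnsolution}: take $T_j=T_{j,\mu_t}$ to be Kohn's weighted canonical solution for a fixed $t$ chosen large enough that $T_{j,\mu_t}$ is bounded on $W^{m+k-1}(D_j)$ (hence on all $W^s$, $0\le s\le m+k-1$, by interpolation). Since the weighted Bergman projection $P_{\mu_t}$ annihilates $T_{j,\mu_t}$, Lemma \ref{commutativity} gives the honest commutator identity $[\partial_\nu,T_{j,\mu_t}]=0$ on $\dbar_j$-closed data. Note also that this is why the operator is written $S_m$ with an $m$-dependence: one picks $t=t(m)$, rather than asserting a single $T_j$ bounded on every $W^s$ simultaneously.
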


\begin{proof}
For each $j=1,\dots,k$, let $T_{j,\mu_t}$ be the weighted $L^2$-canonical solution for $\dbar_j$ on $D_j$ as in Example \ref{Kohnsolution}. Pick $t>0$ sufficiently large. By \cite{Kohn73}, $T_{j,\mu_t}$ is $W^{m+k-1}(D_j)$-bounded, see also \cite[Theorem 5.1 in \S 5.1]{Straube10}. Since $T_{j,\mu_t}$ is also $L^2(D_j)$-bounded, by interpolation of $L^2$-Sobolev spaces, $T_{j,\mu_t}$ is $W^s(D_j)$-bounded for $0\le s\le m+k-1$. By Example \ref{Kohnsolution}, for each $j$, $T_{j,\mu_t}$ commutes with $\partial_{\nu}$ on $W^1_{(0,1)}(D)\cap\ker(\dbar_j)$ for any direction $\nu$ not in $D_j$. 

Since $f\in W_{(0,1)}^{m+k-1}(D)$, $f\in\sB^2$; so Proposition \ref{highdimexistence} applies for $p=2$. It remains to verify the $(0,1)$-forms $g_j$ constructed in the proof of Proposition \ref{highdimexistence} are in $L^2_{(0,1)}(D)$ and the $L^2$-Sobolev regularity of $S_m(f):=v^1+\cdots+v^k$ as in Proposition \ref{LpforhighT}.

By construction, $v^j=T_{j,\mu_t}(\pi_j(g_j))$. Example \ref{Kohnsolution} shows $T_{j,\mu_t}$ commutes with directional derivatives $\partial_{\nu}$ when $\nu$ is not in a direction given by $D_j$. If $\nu$ points in a direction of $D_j$,  
$\left\|\partial_{\nu}v^j\right\|_s \lesssim \left\|\pi_j(g_j)\right\|_{s+1}$ since $T_{j,\mu_t}$ on $W^s(D_j)\to W^s(D_j)$. Hence
\[
\|v^j\|_{W^s(D)}\le C\|\pi_j(g_j)\|_{W^s_{(0,1)}(D)}\le C\|g_j\|_{W^s_{(0,1)}(D)}.
\]
Note that $g_j=g_{j-1}+\dbar v^{j-1}$. So
\[
\|g_j\|_{W^s_{(0,1)}(D)}\le\|g_{j-1}\|_{W^s_{(0,1)}(D)}+\|v^{j-1}\|_{W^{s+1}(D)}.
\]
Combining these recursive estimates yields
\[
\|v^j\|_{W^m(D)}\le C\|g_1\|_{W^{j+m-1}_{(0,1)}(D)}=C\|f\|_{W^{j+m-1}_{(0,1)}(D)}
\]
and
\[
\|g_j\|_{W^m_{(0,1)}(D)}\le C\|g_1\|_{W^{j+m-1}_{(0,1)}(D)}=C\|f\|_{W^{j+m-1}_{(0,1)}(D)},
\]
for $j=1,\dots,k$. Therefore $g_j\in L^2_{(0,1)}(D)$ and
\[
\|S_m(f)\|_{W^m(D)}\le\sum_{j=1}^k\|v^j\|_{W^m(D)}\le C\|f\|_{W^{m+k-1}_{(0,1)}(D)}.
\]
\end{proof}

\begin{remark}
This improves the $L^2$-Sobolev estimate in Section 6.2 of \cite{ChakrabartiShaw11}. Also, when $f\in C^{\infty}_{(0,1)}(\overline{D})$, a standard Mittag-Leffler construction, see \cite{Straube10} or \cite{ChenShaw01}, yields a solution $u\in\cap_{m=1}^{\infty}W^m(D)$. Since $D$ is a product domain with smooth factors, $D$ satisfies the strong local Lipschitz condition---each boundary point has a neighborhood such that the boundary in that neighborhood is the graph of a Lipschitz function. Thus $u\in C^{\infty}(\overline{D})$ by the Sobolev embedding theorem.
\end{remark}

\begin{remark}
By Lemma \ref{commutativity} and Corollary \ref{solutioncommute}, the solution operators $T_{j,\mu_t}$ commute with each other and with $\dbar_i$ when $i\neq j$. Consequently, $S_m(f)=v^1+\cdots+v^k$ is independent of the order of the factors in $D=D_1\times\cdots\times D_k$. This follows by employing the observation \eqref{E:alt2}. This also holds for the solution in Corollary \ref{C:Wkp}, where commutativity of the solution operators on each factor is guaranteed by Fubini's Theorem.
\end{remark}


\section{Higher dimensional factors: H\"{o}lder estimates}
\label{S:Holder}

\subsection{The H\"{o}lder spaces} In Sobolev or $L^p$ spaces, norms can be evaluated by iterating lower dimensional integrals. In standard H\"{o}lder spaces this is not possible, since the difference $z-z'$ can be in any direction.

\begin{definition}
For $0<\beta<1$, the classical H\"{o}lder space of order $\beta$ on the domain $D$ is the set
\[
\Lambda^{\beta}(D)=\{g\in L^{\infty}|\,\|g\|_{\Lambda^{\beta}(D)}<\infty\}
\]
with the norm $\|g\|_{\Lambda^{\beta}(D)}=\|g\|_{L^{\infty}(D)}+\|g\|_{\beta}$, where
\[
\|g\|_{\beta}=\sup_{z\neq z'\in D}\frac{|g(z)-g(z')|}{|z-z'|^{\beta}}.
\]
\end{definition}

The following example is due to Stein.

\begin{example}
Consider the bidisk $\D^2$ in $\C^2$. Let $v(z_1,z_2)=\bar z_2/\log(z_1-1)$ be a complex function on $\D^2$, where $\pi/2<\arg(z_1-1)<3\pi/2$. Let $f=\dbar v$ be a $(0,1)$-form on $\D^2$.

Then $f$ is $\dbar$-closed and $f\in L^{\infty}_{(0,1)}(\D^2)$. For $\beta>0$, there is no complex function $u$ on $\D^2$ such that $\dbar u=f$ and $u\in\Lambda^{\beta}(\D^2)$. See \cite[section 1.2 B]{Kerzman71} for details.
\end{example}

This example shows classical H\"{o}lder norms are not really suited to the $\dbar$-equation on product domains. A substitute scale of iterated H\"{o}lder spaces is considered here. For simplicity, let $D=D_1\times D_2$, where $D_1\subset\C^{n_1}$ and  $D_2\subset\C^{n_2}$ are bounded domains.

\begin{definition}
\label{2sepHolder}
For $\alpha=(\alpha_1,\alpha_2)$ with $0<\alpha_1,\alpha_2<1$, the $2$-iterated H\"{o}lder space of order $\alpha$ is the set
\[
\Lambda_{2}^{\alpha}(D)=\{g\in L^{\infty}(D)\,|\,\|g\|_{\Lambda_{2}^{\alpha}(D)}<\infty\}
\]
with the norm $\|g\|_{\Lambda_{2}^{\alpha}(D)}=\|g\|_{L^{\infty}(D)}+\|g\|_{\alpha}$, where
\[
\|g\|_{\alpha}=\sup_{z^1\neq w^1 \in D_1}\sup_{z^2\neq w^2\in D_2}\frac{|g(z^1,z^2)-g(w^1,z^2)-g(z^1,w^2)+g(w^1,w^2)|}{|z^1-w^1|^{\alpha_1}|z^2-w^2|^{\alpha_2}}.
\]
\end{definition}

\begin{remark}
Note that $\|g\|_{\alpha}$ is independent of the order of taking the $\sup$ in $D_1$ and $D_2$, i.e.
\[
\sup_{z^2\neq w^2\in D_2}\frac{\|g(\cdot,z^2)-g(\cdot,w^2)\|_{\alpha_1(D_1)}}{|z^2-w^2|^{\alpha_2}}=\sup_{z^1\neq w^1 \in D_1}\frac{\|g(z^1,\cdot)-g(w^1,\cdot)\|_{\alpha_2(D_2)}}{|z^1-w^1|^{\alpha_1}}.
\]
\end{remark}

The following examples show iterated H\"{o}lder spaces are quite different from classical H\"{o}lder spaces.
\begin{example}
Let $g(z_1,z_2)=1/\log(z_1-1)$ on $\D^2$, where $\pi/2<\arg(z_1-1)<3\pi/2$.
\begin{enumerate}
\item It is not hard to check $g\in L^{\infty}(\D^2)$.
\item Take $z=(1-2d,0)$ and $z'=(1-d,0)$, where $0<d<1/2$ is sufficiently small. Since
\[
\left|\frac{1}{\log(-2d)}-\frac{1}{\log(-d)}\right|\nleq Cd^{\beta}
\]
for any $0<\beta<1$ and constant $C>0$, we see that $g\notin\Lambda^{\beta}(\D^2)$. 
\item For $\alpha=(\alpha_1,\alpha_2)$ with $0<\alpha_1,\alpha_2<1$, $g\in\Lambda^{\alpha}_2(\D^2)$, since for any $z_1,z_2,w_1,w_2\in\D$
\[
g(z_1,z_2)-g(w_1,z_2)-g(z_1,w_2)+g(w_1,w_2)\equiv 0.
\]
\end{enumerate}
\end{example}

\begin{example}
On $\D^2$, let
\[
g(z_1,z_2)=\left\{ \begin{array}{lc} |z|^2\sin\left(\frac{1}{|z|}\right), & z\neq 0 \\ 0, & z=0 \end{array}\right. \qquad \text{where }|z|^2=|z_1|^2+|z_2|^2.
\]
\begin{enumerate}
\item It is not hard to check $g\in L^{\infty}(\D^2)$.
\item Since $g$ is real differentiable and its differential is bounded, $g\in\Lambda^{\beta}(\D^2)$ for $0<\beta\le 1$ by the mean value theorem.
\item Given any $\alpha=(\alpha_1,\alpha_2)$ with $0<\alpha_1,\alpha_2<1$, take $z_1=a>0$, $z_2=b>0$, and $w_1=w_2=0$, then
\[
\|g\|_{\alpha}\ge\sup_{a,b\in(0,1)}\frac{\left|(a^2+b^2)\sin\frac{1}{\sqrt{a^2+b^2}}-b^2\sin\frac{1}{b}-a^2\sin\frac{1}{a}\right|}{a^{\alpha_1}b^{\alpha_2}}=M.
\]
If $b=a^k$, where $k$ is sufficiently large, then $M\ge \lim_{a\to0^+}Ca^{2-\alpha_1-k\alpha_2}=\infty$, provided $2<\alpha_1+k\alpha_2$. So $g\notin\Lambda^{\alpha}_2(\D^2)$.
\end{enumerate}
\end{example}

\subsection{The estimates}
Consider the $\dbar$-equation on $D=D_1\times D_2$
\begin{equation}
\label{dbarD1D2}
\dbar u=f=\pi_1(f)+\pi_2(f),
\end{equation}
for $f$ a $(0,1)$-form satisfying $\dbar f=0$ weakly. Notationally $z^j\in D_j$,
\[
\pi_1(f)=\bm{f}^1=f^1_1\,d\bar z^1_1+\cdots+f^1_{n_1}\,d\bar z^1_{n_1},\qquad\text{and}\qquad \pi_2(f)=\bm{f}^2=f^2_1\,d\bar z^2_1+\cdots+f^2_{n_2}\,d\bar z^2_{n_2}.
\]

\begin{theorem}
\label{main3}
Consider the $\dbar$-equation \eqref{dbarD1D2}. For $j=1,2$, assume there is a linear bounded operator $T_j:L^{\infty}_{(0,1)}(D_j)\to\Lambda^{\alpha_j}(D_j)$ solving  the $\dbar_j$-equation that commutes with  the barred derivatives on the other factor; generically denote these as $\partial/\partial\bm{\bar z}_*$.

If $f_{\{1,2\}}\in L^{\infty}(D)$, $f_{\{1\}}\in L^{\infty}(D_1)\otimes\Lambda^{\alpha_2}(D_2)$, and $f_{\{2\}}\in\Lambda^{\alpha_1}(D_1)\otimes L^{\infty}(D_2)$, there exists a solution $u=T(f)$ of \eqref{dbarD1D2} satisfying
\[
\|u\|_{\Lambda_{2}^{\alpha}(D)}\le C_{\alpha}\left(\|f_{\{1\}}\|_{L^{\infty}(D_1)\otimes\Lambda^{\alpha_2}(D_2)}+\|f_{\{2\}}\|_{\Lambda^{\alpha_1}(D_1)\otimes L^{\infty}(D_2)}+\|f_{\{1,2\}}\|_{L^{\infty}(D)}\right).
\]  
Recall $0<\alpha_1,\alpha_2<1$ above.
\end{theorem}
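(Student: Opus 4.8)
The plan is to run the recursion of Proposition \ref{highdimexistence} with $k=2$, so that $u = T(f) = v^1 + v^2$ where
\[
v^1 = T_1\bigl(\pi_1(f)\bigr), \qquad g_2 = f - \dbar v^1, \qquad v^2 = T_2\bigl(\pi_2(g_2)\bigr),
\]
and then estimate $v^1$ and $v^2$ separately in the iterated H\"older norm $\Lambda_2^\alpha(D)$. By the remark following Definition \ref{2sepHolder}, the seminorm $\|\cdot\|_\alpha$ is computed by taking the $\Lambda^{\alpha_1}(D_1)$-seminorm of second differences in the $z^1$ variable and then the $\Lambda^{\alpha_2}(D_2)$-seminorm in the $z^2$ variable (in either order); this factorization of the norm is exactly what lets a factorized operator like $T_1$ then $T_2$ be controlled. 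So the first step is to unwind: because $T_1: L^\infty_{(0,1)}(D_1)\to\Lambda^{\alpha_1}(D_1)$ commutes with the barred derivatives $\partial/\partial\bm{\bar z}^2_*$, for each fixed $z^2$ the function $v^1(\cdot,z^2)$ lies in $\Lambda^{\alpha_1}(D_1)$ with norm $\lesssim \|\pi_1(f)(\cdot,z^2)\|_{L^\infty(D_1)} = \|f_{\{1\}}(\cdot,z^2)\|_{L^\infty(D_1)}$, and the difference $v^1(\cdot,z^2) - v^1(\cdot,w^2)$ equals $T_1$ applied to $\pi_1(f)(\cdot,z^2) - \pi_1(f)(\cdot,w^2)$, which has $L^\infty(D_1)$-norm controlled by $|z^2 - w^2|^{\alpha_2}\,\|f_{\{1\}}\|_{L^\infty(D_1)\otimes\Lambda^{\alpha_2}(D_2)}$. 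Applying $T_1$'s boundedness once more gives $\|v^1\|_{\Lambda_2^\alpha(D)} \lesssim \|f_{\{1\}}\|_{L^\infty(D_1)\otimes\Lambda^{\alpha_2}(D_2)}$, plus the sup-norm term which is handled the same way.

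The second step is to estimate $v^2 = T_2(\pi_2(g_2))$. Here the key identity, exactly as in \eqref{E:alt2}, is that $\pi_2(g_2) = \pi_2(f) - \dbar_2 v^1$, and since $v^1 = T_1(\pi_1(f))$ commutes with $\partial/\partial\bm{\bar z}^2_*$ while $\dbar_1 v^1 = \pi_1(f)$, the component $\pi_2(g_2)$ can be written in terms of $f_{\{2\}}$ and $f_{\{1,2\}}$ only — concretely, the $m$-th component of $\dbar_2 v^1$ is $\partial v^1/\partial \bar z^2_m = T_1\bigl(\partial \pi_1(f)/\partial\bar z^2_m\bigr) = T_1$ applied to (the $\dbar_1$-closed $(0,1)$-form whose components are) $f_{\{1,2\}}$-type derivatives. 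I would then split: the $\pi_2(f)$ piece of $\pi_2(g_2)$ has $\Lambda^{\alpha_1}(D_1)$-control in $z^1$ uniformly in $z^2$ (this is the hypothesis $f_{\{2\}}\in\Lambda^{\alpha_1}(D_1)\otimes L^\infty(D_2)$), and $T_2$ in the $z^2$ variable — which commutes with $\partial/\partial\bm{\bar z}^1_*$, hence acts boundedly on $\Lambda^{\alpha_1}(D_1)$-valued functions — upgrades this to a $\Lambda_2^\alpha(D)$ bound by $\|f_{\{2\}}\|_{\Lambda^{\alpha_1}(D_1)\otimes L^\infty(D_2)}$. For the $\dbar_2 v^1$ piece: its $z^1$-regularity comes from $T_1$ applied to the $L^\infty(D_1)$-bounded datum $f_{\{1,2\}}$, giving $\Lambda^{\alpha_1}(D_1)$-control uniformly in $z^2$, and again $T_2$ in $z^2$ promotes this to $\Lambda_2^\alpha(D)$ at the cost of $\|f_{\{1,2\}}\|_{L^\infty(D)}$. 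Summing the two contributions to $v^2$ and adding the $v^1$ estimate yields the stated bound.

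The main obstacle, and the step needing the most care, is the bookkeeping in the second step: making precise that $T_2$ — a priori only known to be bounded $L^\infty_{(0,1)}(D_2)\to\Lambda^{\alpha_2}(D_2)$ and to commute with barred derivatives in the $D_1$ directions — in fact acts boundedly on the mixed Banach-space-valued objects $L^\infty(D_1)\otimes\Lambda^{\alpha_2}(D_2)$-type and $\Lambda^{\alpha_1}(D_1)\otimes L^\infty(D_2)$-type data, producing $\Lambda_2^\alpha(D)$ output. The clean way to see this is to fix two points $w^1, z^1 \in D_1$, apply $T_2$ to the difference $\pi_2(g_2)(z^1,\cdot) - \pi_2(g_2)(w^1,\cdot)$ (which is $\dbar_2$-closed in $z^2$, by Lemma \ref{dbarcomp}, and bounded in $L^\infty(D_2)$ with norm $\lesssim |z^1-w^1|^{\alpha_1}$ times the appropriate mixed norm), use commutativity to identify this with $v^2(z^1,\cdot) - v^2(w^1,\cdot)$, and then read off the $\Lambda^{\alpha_2}(D_2)$-estimate from the $\dbar_2$-solvability hypothesis on $T_2$; dividing by $|z^1-w^1|^{\alpha_1}$ and taking the supremum is the desired bound. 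One must also verify at the outset that all the data under discussion lies in $\cb$ (equivalently $\sB^2$, or at least that $T$ is well-defined here) so that $\dbar(Tf)=f$ holds — this follows from boundedness of $D$ and the $L^\infty$ hypotheses, reducing to Theorem \ref{main2} (with $p=\infty$) for the solution property, while the new content is purely the H\"older estimate.
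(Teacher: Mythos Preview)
Your proposal is correct and matches the paper's proof: the same two-step recursion $u=v^1+v^2$, the same difference technique (apply $T_j$ to data evaluated at two distinct points in the other factor and invoke linearity plus the $L^\infty\to\Lambda^{\alpha_j}$ bound), and the same use of commutativity of $T_1$ with $\partial/\partial\bar z^2_*$ to rewrite $\dbar_2 v^1$ as $T_1$ applied to $f_{\{1,2\}}$-type data. One small correction: identifying $T_2$ applied to a $z^1$-difference with $v^2(z^1,\cdot)-v^2(w^1,\cdot)$ needs only \emph{linearity} of $T_2$, not commutativity---in the two-factor case the commutativity hypothesis on $T_2$ is in fact never invoked.
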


\begin{corollary}
\label{Holderestimate}
In particular, if $f_I\in\Lambda_{2}^{\alpha}(D)$ for all $I\neq0$, the solution $u=T(f)$ satisfies the estimate
\[
\|u\|_{\Lambda_{2}^{\alpha}(D)}\le C_\alpha\sum_{|I|\neq0}\|f_I\|_{\Lambda_{2}^{\alpha}(D)}.
\]
The operator $T$ is bounded from $\{f\,\dbar\text{-closed}\,|\,f_I\in\Lambda_{2}^{\alpha}(D)\}$ into $\Lambda_{2}^{\alpha}(D)$. 
\end{corollary}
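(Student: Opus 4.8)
The plan is to obtain Corollary~\ref{Holderestimate} as the special case of Theorem~\ref{main3} in which all the data forms $f_I$ lie in $\Lambda_2^\alpha(D)$, so I will first record that reduction and then recall the argument behind Theorem~\ref{main3}, since that is where the work lies. Given $f$ weakly $\dbar$-closed with $f_I\in\Lambda_2^\alpha(D)\subset L^\infty(D)$ for all $I\neq\emptyset$, we have $f\in\sB^\infty$, and since each $T_j\colon L^\infty_{(0,1)}(D_j)\to\Lambda^{\alpha_j}(D_j)\hookrightarrow L^\infty(D_j)$ is bounded and commutes with the transverse barred derivatives, hypothesis~(C) holds with $p=\infty$; thus Propositions~\ref{highdimexistence} and~\ref{LpforhighT} already produce the solution $u=T(f)$, $\dbar u=f$. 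To invoke Theorem~\ref{main3} I would check that membership of $f_{\{1\}},f_{\{2\}},f_{\{1,2\}}$ in $\Lambda_2^\alpha(D)$ subsumes its three hypotheses, with the mixed norms $L^\infty(D_1)\otimes\Lambda^{\alpha_2}(D_2)$ and $\Lambda^{\alpha_1}(D_1)\otimes L^\infty(D_2)$ dominated by $\|\cdot\|_{\Lambda_2^\alpha(D)}$ — a matter of unwinding Definitions~\ref{2sepHolder} and~\ref{fI}. Then Theorem~\ref{main3} gives the displayed estimate, and boundedness of $T$ on the indicated space is immediate.

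For Theorem~\ref{main3} itself, I would use the two-factor instance of the construction in Proposition~\ref{highdimexistence}: $T(f)=v^1+v^2$ with $v^1=T_1(\pi_1(f))$ and $v^2=T_2(\pi_2(g_2))$, where $g_2=f-\dbar v^1=\pi_2(f)-\dbar_2 v^1$ (using $\dbar_1 v^1=\pi_1(f)$ and $\pi_1(g_2)=0$ from Lemma~\ref{L:vanishing}). The key structural fact is the Remark after Definition~\ref{2sepHolder}: the iterated seminorm $\|g\|_\alpha$ equals $\sup_{z^2\neq w^2}|z^2-w^2|^{-\alpha_2}\,\|g(\cdot,z^2)-g(\cdot,w^2)\|_{\alpha_1(D_1)}$, and also the symmetric expression, so each of $v^1,v^2$ can be bounded one factor at a time. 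For $v^1$: since $T_1$ acts only in the $\bm z^1$ variables, linearity gives $v^1(\cdot,z^2)-v^1(\cdot,w^2)=T_1\!\big(\pi_1(f)(\cdot,z^2)-\pi_1(f)(\cdot,w^2)\big)$, and boundedness $T_1\colon L^\infty_{(0,1)}(D_1)\to\Lambda^{\alpha_1}(D_1)$ yields $\|v^1(\cdot,z^2)-v^1(\cdot,w^2)\|_{\alpha_1(D_1)}\le C\sup_{z^1}\big|f_{\{1\}}(z^1,z^2)-f_{\{1\}}(z^1,w^2)\big|\le C\|f_{\{1\}}\|_{L^\infty(D_1)\otimes\Lambda^{\alpha_2}(D_2)}\,|z^2-w^2|^{\alpha_2}$; together with $\|v^1\|_{L^\infty(D)}\le C\|f_{\{1\}}\|_{L^\infty(D)}$ this controls $\|v^1\|_{\Lambda_2^\alpha(D)}$.

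For $v^2$ I would use the other order of the iterated norm: $v^2(z^1,\cdot)-v^2(w^1,\cdot)=T_2\!\big(\pi_2(g_2)(z^1,\cdot)-\pi_2(g_2)(w^1,\cdot)\big)$, so by $T_2\colon L^\infty_{(0,1)}(D_2)\to\Lambda^{\alpha_2}(D_2)$ it suffices to bound $\|\pi_2(g_2)(z^1,\cdot)-\pi_2(g_2)(w^1,\cdot)\|_{L^\infty_{(0,1)}(D_2)}$ by $C\,|z^1-w^1|^{\alpha_1}$ times the data norms. The $\pi_2(f)$ contribution is immediate from $f_{\{2\}}\in\Lambda^{\alpha_1}(D_1)\otimes L^\infty(D_2)$. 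The $\dbar_2 v^1$ contribution is the crux: commutativity of $T_1$ with $\partial/\partial\bar z^2_j$ gives $\partial_{\bar z^2_j}v^1=T_1\!\big(\partial_{\bar z^2_j}\pi_1(f)\big)$, whose components are $T_1$ applied to generic $f_{\{1,2\}}$; hence for each fixed $z^2$, $\partial_{\bar z^2_j}v^1(\cdot,z^2)\in\Lambda^{\alpha_1}(D_1)$ with norm $\le C\|f_{\{1,2\}}\|_{L^\infty(D)}$, so $\big|\partial_{\bar z^2_j}v^1(z^1,z^2)-\partial_{\bar z^2_j}v^1(w^1,z^2)\big|\le C\|f_{\{1,2\}}\|_{L^\infty(D)}\,|z^1-w^1|^{\alpha_1}$ uniformly in $z^2$, and likewise $\|\dbar_2 v^1\|_{L^\infty(D)}\le C\|f_{\{1,2\}}\|_{L^\infty(D)}$. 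Feeding this through $T_2$ bounds $\|v^2\|_{\Lambda_2^\alpha(D)}$; summing the two estimates yields Theorem~\ref{main3} and hence the Corollary.

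The main obstacle I anticipate is the bookkeeping that keeps \emph{every} Hölder bound iterated: one must never invoke a classical (non-iterated) Hölder estimate on the data or on the correction $\dbar_2 v^1$, only the ``difference in one factor, Hölder seminorm in the other'' bounds above. This is precisely why the hypothesis only demands $f_{\{1,2\}}\in L^\infty(D)$ with no Hölder regularity, and why the commutativity of $T_1$ with transverse barred derivatives — an actual assumption of Theorem~\ref{main3} — is indispensable: it is what lets $\dbar_2 v^1$ be rewritten as $T_1$ applied to the bounded datum $f_{\{1,2\}}$, thereby inheriting $\Lambda^{\alpha_1}$-regularity in the $\bm z^1$ variables uniformly in $z^2$.
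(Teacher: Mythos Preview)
Your proposal is correct and takes essentially the same approach as the paper. The paper offers no separate proof of the Corollary, treating it as an immediate consequence of Theorem~\ref{main3}; your reconstruction of that proof matches the paper's argument step by step---the same decomposition $u=v^1+v^2$, the same use of linearity of $T_j$ on differences in the other variable combined with the bound $T_j:L^\infty_{(0,1)}(D_j)\to\Lambda^{\alpha_j}(D_j)$, and the same appeal to commutativity to rewrite $\dbar_2 v^1$ as $T_1$ applied to $f_{\{1,2\}}$ and thereby control it in $\Lambda^{\alpha_1}(D_1)\otimes L^\infty(D_2)$.
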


\begin{proof}[Proof of Theorem \ref{main3}]
This follows the same argument as the proof of Theorem \ref{main2}. Let $g_1=f$. For each point in $D_2$, let $v^1=T_1(\pi_1(g_1))$. Then $v^1$ satisfies the $\dbar_1$-equation on $D_1$
\[
\dbar_1 v^1=\pi_1(g_1).
\]
Take any two distinct points $z^2,w^2\in D_2$ and consider the difference $v^1(\cdot,z^2)-v^1(\cdot,w^2)$;  this satisfies 
\[
\dbar_1\big(v^1(\cdot,z^2)-v^1(\cdot,w^2)\big)=\pi_1(g_1)(\cdot,z^2)-\pi_1(g_1)(\cdot,w^2).
\]
By regularity and linearity of $T_1$, it follows that
\[
\|v^1(\cdot,z^2)\|_{L^{\infty}(D_1)}\le \|v^1(\cdot,z^2)\|_{\Lambda^{\alpha_1}(D_1)}\le C_{\alpha_1}\|\pi_1(g_1)(\cdot,z^2)\|_{L^{\infty}_{(0,1)}(D_1)}
\]
and
\begin{align*}
\|v^1(\cdot,z^2)-v^1(\cdot,w^2)\|_{\alpha_1(D_1)}&\le\|v^1(\cdot,z^2)-v^1(\cdot,w^2)\|_{\Lambda^{\alpha_1}(D_1)} \\ &\le C_{\alpha_1}\|\pi_1(g_1)(\cdot,z^2)-\pi_1(g_1)(\cdot,w^2)\|_{L^{\infty}_{(0,1)}(D_1)}.
\end{align*}
This yields the iterated H\"{o}lder estimate for $v^1$:
\begin{equation}
\label{forv1}
\|v^1\|_{\Lambda_{2}^{\alpha}(D)}\le C_{\alpha_1}\|f_{\{1\}}\|_{L^{\infty}(D_1)\otimes\Lambda^{\alpha_2}(D_2)}.
\end{equation}

Let $g_2=g_1-\dbar v^1$ on $D$. As in the proof of Proposition \ref{highdimexistence}, let $v^2=T_2(\pi_2(g_2))$. Since $\pi_1(g_2)=\pi_1(g_1)-\dbar_1 v^1=0$, it holds that $g_2=\pi_2(g_2)=\pi_2(g_1)-\dbar_2 v^1$. By  regularity and  linearity of $T_2$, the argument that gave \eqref{forv1} shows 
\begin{equation}
\label{forv2}
\|v^2\|_{\Lambda_{2}^{\alpha}(D)}\le C_{\alpha_2}\Big(\|f_{\{2\}}\|_{\Lambda^{\alpha_1}(D_1)\otimes L^{\infty}(D_2)}+\|\dbar_2 v^1\|_{\Lambda^{\alpha_1}(D_1)\otimes L^{\infty}_{(0,1)}(D_2)}\Big).
\end{equation}

As in the proof of Proposition \ref{LpforhighT}, commutativity of $T_1$ and $\partial/\partial\bar z^2_{j}$ and regularity of $T_1$, for $j=1,\dots,n_2$ implies
\[
\left\|\frac{\partial v^1}{\partial \bar z^2_{j}}(\cdot,z^2)\right\|_{\Lambda^{\alpha_1}(D_1)}\le C_{\alpha_1}\left\|\frac{\partial}{\partial \bar z^2_{j}}\pi_1(g_1)(\cdot,z^2)\right\|_{L^{\infty}_{(0,1)}(D_1)}.
\]
Thus
\begin{equation}
\label{fordv1}
\|\dbar_2 v^1\|_{\Lambda^{\alpha_1}(D_1)\otimes L^{\infty}_{(0,1)}(D_2)}\le C_{\alpha_1}\|f_{\{1,2\}}\|_{L^{\infty}(D)}.
\end{equation}

The function $u=T(f)=v^1+v^2$ solves $\dbar u=f$, just as in the proof of Proposition \ref{highdimexistence}. The iterated H\"{o}lder estimate for $u$ is obtained by combining \eqref{forv1}, \eqref{forv2}, and \eqref{fordv1}. \end{proof}

\begin{corollary}\label{C:iteratedholder}
Let $D=D_1\times D_2$, where $D_j$ is a bounded domain in $\C^{n_j}$ for $j=1,2$. If $D_j$ is $1$-dimensional, then $\alpha_j$ can be any value in $(0,1)$. If $D_j$ is at least $2$-dimensional and is strongly pseudoconvex with $C^2$ boundary, then $\alpha_j$ can take any value in $(0,1/2]$. 

Assume $f_{\{1,2\}}\in L^{\infty}(D)$, $f_{\{1\}}\in L^{\infty}(D_1)\otimes\Lambda^{\alpha_2}(D_2)$, and $f_{\{2\}}\in\Lambda^{\alpha_1}(D_1)\otimes L^{\infty}(D_2)$. There exists a solution $u=T(f)$ of the equation \eqref{dbarD1D2} with estimate
\[
\|u\|_{\Lambda_{2}^{\alpha}(D)}\le C_{\alpha}\left(\|f_{\{1\}}\|_{L^{\infty}(D_1)\otimes\Lambda^{\alpha_2}(D_2)}+\|f_{\{2\}}\|_{\Lambda^{\alpha_1}(D_1)\otimes L^{\infty}(D_2)}+\|f_{\{1,2\}}\|_{L^{\infty}(D)}\right).
\]
In particular, if $f_I\in\Lambda_{2}^{\alpha}(D)$ for all $I\neq0$, then the solution $u=T(f)$ satisfies the estimate
\[
\|u\|_{\Lambda_{2}^{\alpha}(D)}\le C_\alpha\sum_{|I|\neq0}\|f_I\|_{\Lambda_{2}^{\alpha}(D)}.
\]  
\end{corollary}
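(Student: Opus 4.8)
The plan is to deduce Corollary~\ref{C:iteratedholder} directly from Theorem~\ref{main3} by exhibiting, on each factor $D_j$, a linear bounded operator $T_j:L^\infty_{(0,1)}(D_j)\to\Lambda^{\alpha_j}(D_j)$ that solves $\dbar_j$ and commutes with the barred derivatives in the complementary factor. Once such $T_1,T_2$ are in hand, the two displayed estimates are exactly the conclusion of Theorem~\ref{main3} and its immediate restatement (specializing $f_I\in\Lambda^\alpha_2(D)$, which in particular gives $f_{\{1\}}\in L^\infty(D_1)\otimes\Lambda^{\alpha_2}(D_2)$ etc. after noting $\Lambda^\alpha_2(D)$ controls the mixed norms used there), so there is no new analysis beyond producing the $T_j$.

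First I would treat the one-dimensional factors. If $D_j\subset\C$, take $T_j=-\bm{C}$, the solid Cauchy transform of \eqref{E:solidCT}. By the Appendix Hölder estimate for the solid Cauchy transform (the one-variable statement underlying Section~\ref{S:Holder}, and the same one invoked implicitly in \cite{CheMcN18}), $-\bm{C}:L^\infty(D_j)\to\Lambda^{\alpha_j}(D_j)$ is bounded for every $\alpha_j\in(0,1)$, and Proposition~\ref{integralformula} with $n=1$ shows it solves $\dbar_j$. Since $-\bm{C}$ is an integral operator, Lemma~\ref{L:IntegralsCommute} gives that it commutes with all directional derivatives on the other factor. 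This covers the case $\dim D_j=1$ with the full range $\alpha_j\in(0,1)$.

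Next I would treat the strongly pseudoconvex factors of dimension $\ge 2$. Here take $T_j$ to be the Henkin-type integral solution operator for $\dbar_j$ on $D_j$; classical results (see \cite{HenkinLeiterer,Range86,LiebMichelbook}, and the foundational \cite{Henkin69,GrauertLieb}) give that this operator is bounded $L^\infty_{(0,1)}(D_j)\to\Lambda^{1/2}(D_j)$, and since $\Lambda^{1/2}(D_j)\hookrightarrow\Lambda^{\alpha_j}(D_j)$ on the bounded domain $D_j$ for $0<\alpha_j\le 1/2$, we get boundedness into $\Lambda^{\alpha_j}(D_j)$ for any such $\alpha_j$. It solves $\dbar_j$ on $D_j$ by construction, and being an integral operator it commutes with barred derivatives on the other factor by Lemma~\ref{L:IntegralsCommute} again. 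Thus in both cases hypothesis of Theorem~\ref{main3} is verified for the stated ranges of $\alpha_j$.

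Finally I would assemble: with $T_1,T_2$ as above, Theorem~\ref{main3} produces $u=T(f)=v^1+v^2$ solving \eqref{dbarD1D2} with the first displayed estimate, and Corollary~\ref{Holderestimate} (equivalently, specializing $f_I\in\Lambda^\alpha_2(D)$ and using $\|f_{\{1\}}\|_{L^\infty(D_1)\otimes\Lambda^{\alpha_2}(D_2)}\le\|f_{\{1\}}\|_{\Lambda^\alpha_2(D)}$ and symmetrically, plus $\|f_{\{1,2\}}\|_{L^\infty(D)}\le\|f_{\{1,2\}}\|_{\Lambda^\alpha_2(D)}$) gives the second. The only point that needs a little care—and the step I expect to be the main obstacle—is citing the correct Hölder regularity of the solution operators on the factors with the right exponent: the one-dimensional solid Cauchy transform genuinely gains Hölder-$\alpha$ for all $\alpha<1$, while on a strongly pseudoconvex domain of higher dimension the Henkin operator only gains $\Lambda^{1/2}$ in general, which is precisely why the statement restricts $\alpha_j\in(0,1/2]$ in that case; one must match these two regimes and invoke the embedding $\Lambda^{1/2}\hookrightarrow\Lambda^{\alpha_j}$ on bounded domains to land in the space named in Definition~\ref{2sepHolder}.
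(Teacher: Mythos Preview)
Your proposal is correct and follows essentially the same approach as the paper: verify the hypotheses of Theorem~\ref{main3} by taking $T_j=-\bm{C}$ on one-dimensional factors and the Henkin-type integral operator on strongly pseudoconvex factors, then invoke Lemma~\ref{L:IntegralsCommute} for commutativity. One small correction: the H\"older regularity of the solid Cauchy transform is not actually in the Appendix of this paper (which contains only $L^p$ results); the paper cites \cite[Chapter~IV \S1.6, Lemma~1.15]{Range86} for the one-dimensional case and \cite[Chapter~V \S2.4, Theorem~2.7]{Range86} for the strongly pseudoconvex case.
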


\begin{proof}
If the factors are one-dimensional, the conclusion follows since
\[
-\frac{1}{2\pi i}\int_{D_0}\frac{g(\zeta)\,d\bar\zeta\wedge d\zeta}{\zeta-z}
\]
maps $L^{\infty}$ functions to $\Lambda^{\beta}$ functions for all $0<\beta<1$. See, for example, the proof of Lemma 1.15 in \cite[Chapter IV \S1.6]{Range86}.

If $D_j$  is  strongly pseudoconvex with $C^2$ boundary, then \cite[Chapter V \S2.4, Theorem 2.7]{Range86} guarantees existence and regularity of the solution operator on $D_j$. Since the solution operator is given by the sum and composition of integral operators, Lemma \ref{integralcommute} guarantees the commutativity needed for  Theorem \ref{main3}. 
\end{proof}

\begin{remark}
Let $D=D_1\times\cdots\times D_k$, where $D_j$ is a bounded domain in $\C^{n_j}$ for $j=1,\dots,k$ and let $\alpha=(\alpha_1,\dots,\alpha_k)$ with $0<\alpha_1,\dots,\alpha_k<1$. The $k$-iterated H\"{o}lder spaces $\Lambda^{\alpha}_{k}(D)$ are defined in a similar fashion to Definition \ref{2sepHolder}. Iterated H\"{o}lder estimates on $\dbar$ for products with more than two factors follow by invoking the arguments above, but are left to the interested reader.
\end{remark}

\begin{remark}
Like Corollary \ref{C:Lp}, Corollary \ref{C:iteratedholder} partially answers a question discussed earlier: is there a solution operator for $\dbar$ on $D_1\times D_2$ that preserves $L^{\infty}$? As noted in the Introduction,
a priori smoothness on $f$  is required before $\|u\|_{L^\infty}\lesssim \|f\|_{L^\infty}$ can be derived in Corollary \ref{C:Lp}, Corollary \ref{C:iteratedholder}, or the earlier  \cite{FornaessLeeZhang, CheMcN18}.

However in Corollary \ref{C:iteratedholder}, a little extra holds: from the assumption  that $f_I\in L^{\infty}$,  iterated H\"{o}lder regularity of $Tf$ is obtained. This is similar to the gain in $L^p$ integrability noticed in Remark \ref{R:Lp}. 
\end{remark}


\section{Orthogonality}\label{S:ortho}

Return to products with one-dimensional factors: $D=D_1\times\cdots\times D_n$, where $D_j\subset\C$ are domains with piecewise $C^1$ boundary. Proposition \ref{integralformula} implies \eqref{CIf}  solves  $\dbar v=f$ if there exists a $u\in C^n(\overline{D})$ solving $\dbar u=f$ on $D$. Moreover the left hand side of \eqref{CIf} shows that assuming $u\in C(\overline{D})$ suffices.

Note that if $u',u''\in C(\overline{D})$ both weakly solve the $\dbar$-equation, then
$u'-u''\in \co(D)\cap C(\overline{D})$.
Since $\sC_n$ preserves holomorphic functions, uniqueness of $T$  follows:
\[
\text{id}(u')-\sC_n(u')=\text{id}(u'')-\sC_n(u'').
\]
In other words, $T$ is independent of the auxiliary solution $u$ in Proposition \ref{integralformula}.
In particular, if the solution in Theorem \ref{main1} is in $C(\overline{D})$, uniqueness implies
$\text{id}(u)-\sC_n(u)=u$, which in turn implies
\[
\sC_n(u)=0.
\]

This argument yields 

\begin{theorem}
\label{Cinfty}
Let $D$ be a product domain with one-dimensional bounded smooth factors.  If $f\in C_{(0,1)}^{\infty}(\overline{D})$ and $\dbar f=0$, the solution $Tf=-\sum_{\emptyset\neq I\subset\{1,\dots,n\}}\bm{C}^{I}(f_{I}^{I^c})\in C^{\infty}(\overline{D})$,. Moreover, the multi-Cauchy transform of $Tf$ is $0$.
\end{theorem}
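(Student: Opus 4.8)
The plan is to combine the interior/boundary regularity already built into Proposition \ref{integralformula} with the orthogonality argument sketched just above the statement. First I would establish smoothness of $Tf$. Since $f \in C^\infty_{(0,1)}(\overline D)$ and $D$ is pseudoconvex (a product of smooth planar domains), Hörmander's theorem together with interior elliptic regularity for $\dbar$ on functions produces a solution $u \in C^\infty(D)$ of $\dbar u = f$; near $b D$ one invokes the standard regularity up to the boundary for $\dbar$ on the smooth bounded factors (e.g. via the Cauchy transform on each $D_j$, which maps $C^\infty(\overline{D_j})$ to $C^\infty(\overline{D_j})$) to upgrade this to $u \in C^n(\overline D)$, indeed $u \in C^\infty(\overline D)$. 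Then Proposition \ref{integralformula} applies verbatim: $Tf = u - \sC_n(u)$ by \eqref{CIf}. Since $u \in C^\infty(\overline D)$ and $\sC_n(u) \in \co(D)\cap C^\infty(\overline D)$ (differentiation under the integral sign in \eqref{E:multiCT}, using that $b D_j$ is $C^1$ and $u$ is smooth up to the boundary), we get $Tf \in C^\infty(\overline D)$.

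Next I would prove $\sC_n(Tf) = 0$. The key identity is $Tf = u - \sC_n(u)$ with $u$ the smooth solution above. Apply $\sC_n$ to both sides:
\[
\sC_n(Tf) = \sC_n(u) - \sC_n\big(\sC_n(u)\big).
\]
Because $\sC_n(u)$ is holomorphic on $D$ and continuous on $\overline D$, and because $\sC_n$ reproduces functions holomorphic on $D$ and continuous up to $b D$ — this is the multivariable Cauchy integral formula, obtained by iterating the one-variable statement via Fubini as in \eqref{E:fubini} — we have $\sC_n(\sC_n(u)) = \sC_n(u)$. Hence $\sC_n(Tf) = \sC_n(u) - \sC_n(u) = 0$, as claimed. (Equivalently, one can phrase this as: $Tf$ is itself a $C(\overline D)$ weak solution of $\dbar v = f$, so by the uniqueness/independence remark preceding the theorem, $\mathrm{id}(Tf) - \sC_n(Tf) = Tf$, forcing $\sC_n(Tf) = 0$.)

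The main obstacle is the boundary regularity step: justifying $u \in C^\infty(\overline D)$ rather than merely $u \in C^\infty(D)$, and in parallel justifying that $\sC_n$ may be differentiated under the integral sign and that the Cauchy reproducing property holds for functions only continuous up to the piecewise-$C^1$ boundary. For the solution $u$, the cleanest route is constructive rather than appealing to $\dbar$-Neumann theory: build $u$ by iterating one-variable solid Cauchy transforms of $f$ and its barred derivatives exactly as in the definition of $T$ itself — this is legitimate because $f \in C^\infty_{(0,1)}(\overline D)$ makes every $f_I$ smooth up to the boundary, and Lemma \ref{L:smoothCT} (the smoothness of the solid Cauchy transform, cited in the Appendix) gives smoothness of each building block up to $\overline{D_j}$. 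Then smoothness of $Tf$ and of $\sC_n(u)$ follows from the same appendix lemmas on the boundary behavior of $\sC$ and $\bm C$. Everything else is a formal manipulation of identities already established.
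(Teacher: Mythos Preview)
Your proposal is correct, and the ``cleanest route'' you identify in the final paragraph --- applying Lemma \ref{L:smoothCT} iteratively to each term $\bm{C}^{I}(f_{I}^{I^c})$ in the explicit formula for $Tf$ --- is exactly the paper's proof of smoothness. Your initial detour through an auxiliary solution $u\in C^{\infty}(\overline{D})$ works in principle but is unnecessary (and the boundary-regularity step you flag is a genuine nuisance to justify cleanly on the product): since $f_I\in C^{\infty}(\overline{D})$ for every $I$, repeated application of Lemma \ref{L:smoothCT} one factor at a time already gives $Tf\in C^{\infty}(\overline{D})$ directly from the formula, with no auxiliary solution needed. Once $Tf\in C(\overline{D})$ is known, your orthogonality argument and the paper's coincide: $Tf$ is itself a $C(\overline{D})$ solution of $\dbar v=f$, so the uniqueness remark preceding the theorem (equivalently, your reproducing-property computation $\sC_n(\sC_n u)=\sC_n u$) forces $\sC_n(Tf)=0$.
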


\begin{proof}
This follows from the argument above, after repeatedly applying Lemma \ref{L:smoothCT} to the expression for $T$.
\end{proof}

\begin{remark}
By examining the proof of Lemma \ref{L:smoothCT} and the proof of Theorem \ref{Cinfty}, one can relax the smoothness requirement of $f$ and $D$ to obtain a solution $u=T(f)\in C(\overline{D})$, which also satisfies $\sC_n(u)=0$. \end{remark}

In particular, if $D=\D^n$, then $\cs_n=\sC_n$ is the Szego projection onto $H^2((b\D)^n)$, the Hardy space of the distinguished boundary.

\begin{corollary}\label{C:hardy}
Let $f\in C_{(0,1)}^{\infty}(\overline{\D^n})$ satisfy $\dbar f=0$. Then $u=T(f)\in C^{\infty}(\overline{\D^n})$ and $u$ is orthogonal to the Hardy space $H^2((b\D)^n)$.
\end{corollary}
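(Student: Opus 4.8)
The plan is to derive both assertions from Theorem \ref{Cinfty}, using the identification of the multi-Cauchy transform on the polydisc with the Szeg\H{o} projection that was recorded just before the statement. First I would note that $\D^n$ is the product of the bounded, smooth (indeed real-analytic) factors $\D\subset\C$, so Theorem \ref{Cinfty} applies directly: it yields at once $u:=Tf\in C^\infty(\overline{\D^n})$ --- which is the first claim --- and $\sC_n(u)\equiv 0$ on $\D^n$, where $\sC_n$ acts on the boundary trace $u|_{(b\D)^n}$, a legitimate object since $u$ is smooth up to $\overline{\D^n}$. Thus only the passage from $\sC_n(u)=0$ to orthogonality remains.

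The key is the elementary fact that the one-variable Cauchy transform retains exactly the non-negative Fourier modes. For $g\in C(b\D)$ with $g(\zeta)=\sum_{k\in\Z}\hat g(k)\zeta^k$ on $\T=b\D$, the residue computation $\frac{1}{2\pi i}\int_{b\D}\frac{\zeta^k\,d\zeta}{\zeta-z}=z^k$ for $k\ge 0$ and $=0$ for $k<0$ gives $\sC(g)(z)=\sum_{k\ge 0}\hat g(k)z^k$, so the boundary values of $\sC(g)$ are the Szeg\H{o} projection of $g$ onto $H^2(b\D)$. Expressing $\sC_n$ iteratively as in \eqref{E:fubini} and applying Fubini one variable at a time, the same holds on $(b\D)^n$: for $g\in C((b\D)^n)$, $\sC_n(g)$ is the holomorphic function on $\D^n$ with Taylor coefficients $\hat g(\alpha)$, $\alpha\in\Z_{\ge0}^n$, and its boundary trace is $\cs_n(g)$, the Szeg\H{o} projection of $g$ onto $H^2((b\D)^n)$.

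I would then apply this with $g=u|_{(b\D)^n}$. Since $\sC_n(u)\equiv 0$ on $\D^n$, all of its Taylor coefficients vanish, so $\hat u(\alpha)=0$ for every $\alpha\in\Z_{\ge0}^n$, i.e. $\langle u,\zeta^\alpha\rangle_{L^2((b\D)^n)}=0$ for all such $\alpha$. As $\{\zeta^\alpha:\alpha\in\Z_{\ge0}^n\}$ is a complete orthogonal family in $H^2((b\D)^n)$, this is exactly the statement that $u$ is orthogonal to the Hardy space; equivalently, the boundary trace of the identically-zero function $\sC_n(u)$ equals $\cs_n(u)$, so $\cs_n(u)=0$.

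The only step carrying genuine content is identifying the boundary trace of $\sC_n$ with $\cs_n$ --- the one-variable residue computation lifted to several variables through the product form of the kernel and Fubini. I expect this to be the ``main obstacle'' only in the formal sense that it is the single place a computation is invoked; all limiting and restriction issues are vacuous here because $u$ and $(b\D)^n$ are smooth.
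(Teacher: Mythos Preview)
Your proposal is correct and follows essentially the same route as the paper: apply Theorem \ref{Cinfty} to obtain $u\in C^\infty(\overline{\D^n})$ and $\sC_n(u)=0$, then use the identification of $\sC_n$ on the polydisc with the Szeg\H{o} projection onto $H^2((b\D)^n)$ to conclude orthogonality. The paper simply records that identification in one line preceding the corollary and treats the result as immediate, whereas you spell out the residue computation and the Fourier-coefficient argument; this is added detail, not a different approach.
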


\begin{remark}
The canonical or Kohn solution to $\dbar$ is orthogonal to the Bergman space. The Corollary \ref{C:hardy}  says the solution $Tf$ on $\D^n$ is orthogonal to the Hardy space $H^2((b\D)^n)$. The usual argument then imply $Tf$ is the $L^2((b\D)^n)$-minimal solution.
\end{remark}

Let $D$ be a product domain with higher dimensional factors. The regularity results in \S\ref{smoothregularity} show that if $f$ is sufficiently smooth and $m$ is sufficiently large, the solution $u=S_m(f)$ to $\dbar u=f$ belongs
to $C(\overline{D})$. The argument above applies to $v=u-\cs(u)=(\text{id}-\cs)\circ T(f)$ for any projection $\cs:L^2(\Gamma)\to H^2(\Gamma):=L^2(\Gamma)\cap\co(D)$ which preserves $H^2(\Gamma)$, where $\Gamma$ is a subset of the full boundary $bD$. 

There are several natural subsets $\Gamma\subset bD$, since $D$ is a product domain. Thus there are several  ``Szeg\" o projections'' on product domains. A representative result is

\begin{corollary}
Let $D=D_1\times\cdots\times D_k$ be as in Theorem \ref{sobolev}. 

For each $j=1,\dots,k$, let $\cs_j:L^2(bD_j)\to H^2(D_j)$ be the Szeg\"{o} projection on $D_j$. Let $\Gamma=bD_1\times\cdots\times bD_k$ be the distinguished boundary of $D$. Define the orthogonal projection $\cs=\cs_1\otimes\cdots\otimes\cs_k:L^2(\Gamma)\to H^2(D)$ to be the Szeg\"{o} projection on the $\Gamma$. 

For $m$ sufficiently large, let $S_m$ be the solution operator obtained in Theorem \ref{sobolev}.
Then $v=(\text{id}-\cs)\circ S_m(f)$ is also a solution to
\[
\dbar u=f,
\]
where $f\in W^{m+k-1}_{(0,1)}(\overline{D})$ is $\dbar$-closed. Moreover, $v$ is orthogonal to $H^2(\Gamma)$, the Hardy space associated to $\Gamma$.
\end{corollary}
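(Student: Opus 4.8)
The plan is to run, for the distinguished boundary $\Gamma$, the same ``subtract the holomorphic part'' argument used for one-dimensional factors in Section \ref{S:ortho}; the only new inputs are enough Sobolev regularity of $S_m(f)$ to give it an $L^2(\Gamma)$ boundary trace, and the identification of $\cs=\cs_1\otimes\cdots\otimes\cs_k$ with the orthogonal projection of $L^2(\Gamma)$ onto $H^2(\Gamma)$. First I would fix $m$ large, say $m>N$, so that the Sobolev embedding $W^m(D)\hookrightarrow C(\overline D)$ holds: each $D_j$ has $C^{\infty}$ boundary, so $D$ meets the strong local Lipschitz condition (the remark following Theorem \ref{sobolev}), and the embedding applies. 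For such $m$, Theorem \ref{sobolev} gives $u:=S_m(f)\in W^m(D)$ with $\dbar u=f$, and the embedding yields $u\in C(\overline D)$; in particular the restriction $u|_\Gamma$ is continuous on the compact set $\Gamma=bD_1\times\cdots\times bD_k$, hence lies in $L^2(\Gamma)$, so $\cs(u):=\cs\bigl(u|_\Gamma\bigr)$ is well defined (just as $\sC_n$ was applied to boundary traces in Section \ref{S:ortho}).

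Next I would note that $H^2(\Gamma)=L^2(\Gamma)\cap\co(D)$ is a closed subspace of $L^2(\Gamma)$ which factors as the Hilbert tensor product $H^2(D_1)\otimes\cdots\otimes H^2(D_k)$, and that each $\cs_j$ is the orthogonal projection of $L^2(bD_j)$ onto $H^2(D_j)$. Tensoring these, $\cs$ is the orthogonal projection of $L^2(\Gamma)=L^2(bD_1)\otimes\cdots\otimes L^2(bD_k)$ onto $H^2(\Gamma)$; in particular $\cs(u)\in\co(D)$, so $\dbar\bigl(\cs(u)\bigr)=0$. Hence $v:=(\text{id}-\cs)\circ S_m(f)=u-\cs(u)$ satisfies $\dbar v=\dbar u-\dbar\bigl(\cs(u)\bigr)=f$ on $D$, so $v$ is again a solution of \eqref{dbaronD}. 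Moreover $v$ has $L^2(\Gamma)$ boundary values: $u$ does since it is continuous up to $\overline D$, and $\cs(u)\in H^2(D)$ does by definition of the Hardy space.

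For the orthogonality, since $\cs$ is a projection with range $H^2(\Gamma)$ it satisfies $\cs^2=\cs$, so the boundary trace of $v$ obeys $\cs(v)=\cs(u)-\cs\bigl(\cs(u)\bigr)=\cs(u)-\cs(u)=0$; equivalently $v|_\Gamma\in\ker\cs=\bigl(H^2(\Gamma)\bigr)^{\perp}$, i.e. $\langle v,h\rangle_{L^2(\Gamma)}=0$ for every $h\in H^2(\Gamma)$. That is the claimed orthogonality. I expect the one place deserving care is the second-paragraph identification: precisely matching the ``solid'' map $\cs\colon L^2(\Gamma)\to H^2(D)$ with the orthogonal projection onto $H^2(\Gamma)\subset L^2(\Gamma)$, and justifying the tensor factorization $H^2(\Gamma)=H^2(D_1)\otimes\cdots\otimes H^2(D_k)$ of the Hardy space of the distinguished boundary. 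Granting these standard facts, the rest is the verbatim computation of Section \ref{S:ortho} with $\sC_n$ replaced by $\cs$.
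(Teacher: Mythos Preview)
Your proposal is correct and follows essentially the same approach as the paper: the paper does not give a separate proof of this corollary but rather presents it as an immediate consequence of the preceding paragraph, which observes that for $m$ large the Sobolev embedding puts $u=S_m(f)$ in $C(\overline D)$, so the ``subtract the holomorphic part'' argument from earlier in Section~\ref{S:ortho} applies verbatim with $\sC_n$ replaced by any projection $\cs:L^2(\Gamma)\to H^2(\Gamma)$ preserving $H^2(\Gamma)$. Your write-up makes explicit the Sobolev index, the boundary trace, and the projection identity $\cs^2=\cs$, and correctly flags the tensor-product identification of $\cs$ and $H^2(\Gamma)$ as the only point requiring outside input---all of which the paper leaves implicit.
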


Passing from a solution $\dbar u =f$ to another solution of the same equation is a powerful tool in complex analysis. In practice the new solution is constructed to satisfy additional properties, which depend on
the problem at hand. A summary of such changes of solutions made in the paper is presented, to suggest further application. Let
$T$ be a linear operator solving $\dbar(Tf)=f$. 
\smallskip

\begin{enumerate}
\item $v=(\text{id}-\sC_n)\big(Tf\big)$ is another solution satisfies $\sC_n(v)=0$ with other regularity properties.
\smallskip
\item $w=(\text{id}-\cs)\circ S_m(f)$ is a solution which is annihilated by the projection $\cs$.
\smallskip
\item $S=(\text{id}-P)\circ T$ is a solution operator which commutes with directional differentiation on other factors and inherits regularity from $T$.
\end{enumerate}


\section{Appendix: Cauchy transforms in $\C$}\label{S:appendix}

Results about the one-variable operators $\sC$ and $\bm{C}$ used in previous sections are gathered here. These results are not new but also not sufficiently well-known. The results do not appear in standard texts with the exception of Lemma \ref{L:stokes}. The second author learned these results from S.R. Bell \cite{BellCourse}.

First recall the definitions.
If $D_0\subset\C$ is a bounded domain with  piecewise $C^1$ boundary $bD_0$ and $g\in C(bD_0)$, the Cauchy transform of $g$ is defined
\begin{equation}\label{E:CT1}
\sC(g)(z)=\frac{1}{2\pi i}\int_{bD_0}\frac{g(\zeta)\,d\zeta}{\zeta-z}.
\end{equation}
 If $h\in C(D_0)$, the {\it solid Cauchy transform of} $h$ is defined
\begin{equation}\label{E:solidCT2}
\bm{C}(h)(z)=\frac{1}{2\pi i}\int_{D_0}\frac{h(\zeta)\,d\bar\zeta\wedge d\zeta}{\zeta-z}\qquad h\in C(D_0).
\end{equation}

\subsection{Smoothness to the boundary; Bell's method}

The solid Cauchy transform $\bm{C}$ is not immediately seen to preserve $C^{\infty}(\overline{D_0})$.  However the following idea, due to Bell \cite{BellLigocka, Bell81a}, shows this holds.

\begin{definition}
Let $D_0\subset\C$ be a smoothly bounded domain with defining function $r$. Let $M\in\Z^+$ and $F,G\in C^M(\overline{D_0})$. Say $F=G$ on $bD_0$ to order $M$ if there exists $H\in C^{\infty}(\overline{D_0})$ satisfying
\[
F-G=H\cdot r^M.
\]
\end{definition}

\begin{lemma} \label{L:Bell}
If  $D_0\subset\C$ is a smoothly bounded domain, $h\in C^{\infty}(\overline{D_0})$, and $M\in\Z^+$, then there exists $h^M\in C^M(\overline{D_0})$ so that
\begin{enumerate}
\item[(i)] $h^M=0$ on $bD_0$ to order $1$;
\item[(ii)] $h=\frac{\partial h^M}{\partial \bar z}$ on $bD_0$ to order $M$.
\end{enumerate}
\end{lemma}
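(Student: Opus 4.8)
The plan is to build $h^M$ by solving $\partial h^M/\partial\bar z = h$ formally to high order along $bD_0$, using crucially that $r_{\bar z}$ is nonvanishing there. \emph{First} I would record the elementary fact that, since $r$ is a real defining function of $D_0\subset\C$, one has $|r_{\bar z}|^2=\tfrac14|\nabla r|^2\neq0$ on $bD_0$; hence $r_{\bar z}$ is nonvanishing, and $1/r_{\bar z}$ smooth, on some open neighborhood $U$ of $bD_0$ in $\overline{D_0}$. All constructions below are carried out on $U$ and then cut off.

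\emph{Next} I would argue by induction on $M$. For $M=1$, put $\tilde h_1=rh/r_{\bar z}$ on $U$; this is divisible by $r$, and the product rule gives $\partial_{\bar z}\tilde h_1 = h + r\,\partial_{\bar z}(h/r_{\bar z})$, so $h-\partial_{\bar z}\tilde h_1=rH_1$ with $H_1\in C^\infty(U)$ --- precisely (i) and (ii) for $M=1$. For the inductive step, assume $\tilde h_{M-1}\in C^\infty(U)$ satisfies $r\mid\tilde h_{M-1}$ and $h-\partial_{\bar z}\tilde h_{M-1}=r^{M-1}H_{M-1}$ with $H_{M-1}\in C^\infty(U)$. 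Since $\partial_{\bar z}(r^M\varphi)=Mr^{M-1}r_{\bar z}\varphi+r^M\partial_{\bar z}\varphi$, the choice $\varphi=H_{M-1}/(Mr_{\bar z})$ and $\tilde h_M=\tilde h_{M-1}+r^M\varphi$ cancels the $r^{M-1}$ term, giving $h-\partial_{\bar z}\tilde h_M=r^MH_M$ with $H_M\in C^\infty(U)$; moreover $r^M\varphi$ is still divisible by $r$, so $r\mid\tilde h_M$. Thus (i) and (ii) hold for $\tilde h_M$ on $U$.

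\emph{Finally} I would globalize. Choose $\chi\in C^\infty(\overline{D_0})$ with $\chi\equiv1$ on a neighborhood $V$ of $bD_0$ and $\operatorname{supp}\chi\subset U$, and set $h^M=\chi\,\tilde h_M\in C^\infty(\overline{D_0})\subset C^M(\overline{D_0})$. On $V$, $h^M=\tilde h_M$, so the conditions, which constrain only the jet of $h^M$ along $bD_0$, persist; the witnessing remainders $h^M/r$ and $(h-\partial_{\bar z}h^M)/r^M$ are ratios of smooth functions whose denominators do not vanish on $\overline{D_0}\setminus bD_0$, and near $bD_0$ they coincide with the smooth functions produced by the induction, hence extend smoothly to all of $\overline{D_0}$. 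This yields (i) and (ii) in the required sense.

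I do not expect a serious obstacle: the computation is entirely driven by the product rule $\partial_{\bar z}(r^j\varphi)=jr^{j-1}r_{\bar z}\varphi+r^j\partial_{\bar z}\varphi$ and the invertibility of $r_{\bar z}$ near $bD_0$. The only care needed is bookkeeping --- checking that the stage-$M$ correction is genuinely $O(r^M)$ so it does not disturb the lower-order matching already achieved, and that multiplication by $\chi$ leaves (i) and (ii) intact.
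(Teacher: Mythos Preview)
Your proof is correct and follows essentially the same inductive scheme as the paper: set $h^1=r\cdot h/r_{\bar z}$, then correct by terms of the form $r^{M}\varphi$ with $\varphi$ chosen to kill the leading $r^{M-1}$ remainder, using that $r_{\bar z}\neq 0$ near $bD_0$. The only difference is one of hygiene: the paper writes $\phi_1=h/r_{\bar z}$ as if it were globally smooth on $\overline{D_0}$, whereas you correctly note that $r_{\bar z}$ is only guaranteed nonvanishing near $bD_0$, carry out the construction on a collar $U$, and then cut off by $\chi$ --- this is the right way to make the argument airtight, and your check that the quotients $h^M/r$ and $(h-\partial_{\bar z}h^M)/r^M$ extend smoothly across the interior (where $r\neq 0$) is exactly what is needed to match the paper's global definition of ``to order $M$.''
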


\begin{proof}
This is in the spirit of the proof of Bell's lemma, cf.  \cite{BellLigocka, Bell81a}. Let $r$ be a defining function of $D_0$, so that $dr\neq0$ when $r=0$. The functions $h^M$ are constructed by induction on $M$.

Let $M=1$.  Set $h^1=\phi_1\cdot r$ for $\phi_1\in C^{\infty}(\overline{D_0})$ to be determined. Any such $\phi_1$ implies $h^1$ satisfies (i). However
\[
\frac{\partial h^1}{\partial \bar z}-h=(\phi_1)_{\bar z}\cdot r+\phi_1\cdot r_{\bar z}-h.
\]
Thus,  taking $\phi_1=h/r_{\bar z}$ causes $h^1$ to satisfy (ii) as well.

Let $M=k+1$ and assume that $h^{k}$ has been constructed. Let $h^{k+1}$ be of the form $h^{k+1}=h^k+\phi_{k+1}\cdot r^{k+1}$, for $\phi_{k+1}\in C^{\infty}(\overline{D_0})$ to be determined. A computation gives
\begin{align*}
\frac{\partial h^{k+1}}{\partial \bar z}-h
&=\frac{\partial h^{k}}{\partial \bar z}-h+(\phi_{k+1})_{\bar z}\cdot r^{k+1}+\phi_{k+1}\cdot (k+1)r^kr_{\bar z}\\
&=(\phi_{k+1})_{\bar z}\cdot r^{k+1}+[\psi_k\cdot r^k+\phi_{k+1}\cdot (k+1)r^kr_{\bar z}],
\end{align*}
where $\psi_k\in C^{\infty}(\overline{D_0})$ by the induction hypothesis. Taking $\phi_{k+1}=-\psi_k/[(k+1)r_{\bar z}]$ causes $h^{k+1}$ to satisfy (ii) for $M=k+1$. 
\end{proof}

\noindent Smoothness results on $\bm{C}(h)$ follows from Lemma \ref{L:Bell}:

\begin{lemma}\label{L:smoothCT}
If $D_0\subset\C$ is a smoothly bounded domain and $h\in C^{\infty}(\overline{D_0})$, then
\[
\bm{C}(h)(z)\in C^{\infty}(\overline{D_0}).
\]
\end{lemma}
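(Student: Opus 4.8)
The plan is to reduce the statement to Lemma \ref{L:Bell} by an integration-by-parts trick that moves the smoothness of $h^M$ onto $\bm{C}(h)$. First I would fix $M\in\Z^+$ and apply Lemma \ref{L:Bell} to produce $h^M\in C^M(\overline{D_0})$ with $h^M=0$ on $bD_0$ and $h=\partial h^M/\partial\bar z$ on $bD_0$ to order $M$. Writing $h=\partial h^M/\partial\bar z + (\text{error})$, where the error vanishes on $bD_0$ to order $M$ (so is $C^{M-1}$, say, and vanishes on the boundary), I would split $\bm{C}(h)$ into two pieces: $\bm{C}(\partial h^M/\partial\bar z)$ and $\bm{C}(\text{error})$.

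For the main piece, since $h^M$ vanishes on $bD_0$, the Cauchy--Pompeiu / Cauchy--Green formula gives
\[
\bm{C}\!\left(\frac{\partial h^M}{\partial\bar z}\right)(z) = h^M(z) - \frac{1}{2\pi i}\int_{bD_0}\frac{h^M(\zeta)\,d\zeta}{\zeta-z} = h^M(z),
\]
because the boundary integral $\sC(h^M)$ vanishes identically (its density $h^M|_{bD_0}\equiv 0$). Hence this term is exactly $h^M\in C^M(\overline{D_0})$. The remaining term $\bm{C}(\text{error})$ has a density vanishing to order $M$ on $bD_0$; by a direct estimate on the integral kernel (differentiating under the integral sign, using that the singularity $1/(\zeta-z)$ is integrable in $\C$ and that the boundary vanishing kills the boundary terms when one integrates by parts to transfer derivatives off the kernel), this term is seen to lie in $C^{M-1}(\overline{D_0})$, or at any rate in $C^k(\overline{D_0})$ for some $k\to\infty$ as $M\to\infty$. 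Since $M$ was arbitrary and $\bm{C}(h)$ does not depend on $M$, the function $\bm{C}(h)$ is in $C^k(\overline{D_0})$ for every $k$, i.e. $\bm{C}(h)\in C^{\infty}(\overline{D_0})$.

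The step I expect to be the main obstacle is the bookkeeping in the ``remaining term'': making precise that $\bm{C}$ applied to a function vanishing to high order on $bD_0$ gains regularity up to the boundary. The interior smoothness is routine (the kernel is smooth away from the diagonal and $1/(\zeta-z)\in L^1_{loc}$), so the real content is boundary regularity, and this is exactly where the order-$M$ vanishing from Lemma \ref{L:Bell} must be exploited — one integrates by parts, and each integration by parts either differentiates the density (losing one order of the $C^M$ regularity but still fine) or produces a boundary integral that vanishes because of the $r^M$ factor. Care is needed to confirm no boundary terms survive and that the resulting kernels remain integrable. Once this is organized, letting $M\to\infty$ finishes the proof.
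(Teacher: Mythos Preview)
Your decomposition is exactly the paper's: write $h=\partial h^M/\partial\bar z+\sd_M$ with $\sd_M$ vanishing to order $M$ on $bD_0$, dispose of the first piece via Cauchy--Pompeiu (minor sign slip: with the paper's conventions $\bm{C}(\partial h^M/\partial\bar z)=\sC(h^M)-h^M=-h^M$, not $+h^M$), and then analyze $\bm{C}(\sd_M)$.

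The gap is precisely where you flagged it, and your proposed fix (``integrate by parts to transfer derivatives off the kernel'') is not yet a proof. On $D_0$ the singularity $\zeta=z$ sits in the interior, so differentiating under the integral produces $1/(\zeta-z)^2$, which is not integrable; to integrate by parts you must excise a disk around $z$, control the boundary of that disk, and pass to a limit. This can be made to work but is exactly the delicate step you are hand-waving.

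The paper's resolution avoids all of that with one observation: because $\sd_M$ vanishes to order $M$ on $bD_0$, its extension by zero to $\C$ lies in $C^M_c(\C)$. The integral over $D_0$ then equals the integral over $\C$, which is the convolution $\sd_M\ast(1/\zeta)$. Now $1/\zeta\in L^1_{\mathrm{loc}}(\C)$ and $\sd_M\in C^M_c(\C)$, so the standard identity $\partial^\alpha(\sd_M\ast K)=(\partial^\alpha\sd_M)\ast K$ gives $\sd_M\ast(1/\zeta)\in C^M(\C)$ immediately. Hence $\bm{C}(h)=-h^M+\bm{C}(\sd_M)\in C^M(\overline{D_0})$, and since $M$ was arbitrary the lemma follows. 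The extension-by-zero step is the missing idea in your sketch; once you have it, there is no ``bookkeeping'' left.
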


\begin{proof} Let $M\in\Z^+$ be given. Apply Lemma \ref{L:Bell} to $h$ to obtain a function $h^M$ satisfying the conclusion of the lemma. Lemma \ref{L:stokes} applied to $h^M$ yields
\[
h^M(z)=\frac{-1}{2\pi i}\int_{D_0}\frac{\partial h^M}{\partial \bar\zeta}(\zeta)\cdot\frac{d\bar\zeta\wedge d\zeta}{\zeta-z}.
\]
Therefore
\begin{align*}
\bm{C}(h)(z)+h^M(z)
&=\frac{1}{2\pi i}\int_{D_0}[h(\zeta)-\frac{\partial h^M}{\partial \bar\zeta}(\zeta)]\cdot\frac{d\bar\zeta\wedge d\zeta}{\zeta-z}\\
&=\frac{1}{2\pi i}\int_{D_0}\sd_M(\zeta)\cdot\frac{d\bar\zeta\wedge d\zeta}{\zeta-z},
\end{align*}
where $\sd_M=h-\partial h^M/\partial \bar z$. Since $\sd_M$ vanishes to order $M$ on $bD_0$, $\sd_M\in C_c^M(\C)$ by setting $\sd_M=0$ outside $D_0$. Thus
\[
\bm{C}(h)(z)+h^M(z)=\frac{1}{2\pi i}\int_{\C}\sd_M(\zeta)\cdot\frac{d\bar\zeta\wedge d\zeta}{\zeta-z}=\left(\sd_M*\frac{1}{\zeta}\right)(z).
\]
However $\sd_M*1/\zeta\in C^M(\C)$, since $\sd_M$ has compact support.  Since $h^M\in C^M(\overline{D_0})$ as well, it follows that $\bm{C}(h)\in C^M(\overline{D_0})$. $M$ was arbitrary, so $\bm{C}(h)\in C^{\infty}(\overline{D_0})$ follows.
\end{proof}

\subsection{A solution operator}
Stokes theorem connects the Cauchy and solid Cauchy transforms.

\begin{lemma}\label{L:stokes} If $D_0\subset\C$ is a bounded domain with  piecewise $C^1$ boundary $bD_0$ and $g\in C^1(\overline{D_0})$, then
\[
\sC(g)=\bm{C}\left(g_{\bar\zeta}\right)+g.
\]
\end{lemma}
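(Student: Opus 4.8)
The statement is the classical Cauchy--Pompeiu representation formula, and the plan is to derive it by applying Stokes' theorem on $D_0$ with a small disc excised around the singularity of the Cauchy kernel. First I would fix $z\in D_0$ and choose $\varepsilon>0$ small enough that $\overline{B(z,\varepsilon)}\subset D_0$, and set $\Omega_\varepsilon=D_0\setminus\overline{B(z,\varepsilon)}$. On $\overline{\Omega_\varepsilon}$ the $1$-form $\omega=\dfrac{g(\zeta)}{\zeta-z}\,d\zeta$ is $C^1$, since $g\in C^1(\overline{D_0})$ and $\zeta\mapsto(\zeta-z)^{-1}$ is holomorphic there; moreover $d\omega=\dfrac{g_{\bar\zeta}(\zeta)}{\zeta-z}\,d\bar\zeta\wedge d\zeta$, because $d(h\,d\zeta)=h_{\bar\zeta}\,d\bar\zeta\wedge d\zeta$ and $\partial/\partial\bar\zeta$ annihilates the kernel.

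Next I would apply Stokes' theorem on $\Omega_\varepsilon$, whose positively oriented boundary consists of $bD_0$ together with $bB(z,\varepsilon)$ traversed clockwise, obtaining
\[
\int_{\Omega_\varepsilon}\frac{g_{\bar\zeta}(\zeta)}{\zeta-z}\,d\bar\zeta\wedge d\zeta=\int_{bD_0}\frac{g(\zeta)}{\zeta-z}\,d\zeta-\int_{bB(z,\varepsilon)}\frac{g(\zeta)}{\zeta-z}\,d\zeta ,
\]
the last integral taken counterclockwise. (The finitely many corners of $bD_0$ cause no difficulty, since they form a null set on the boundary; alternatively one smooths the corners and passes to a limit.) Then I would let $\varepsilon\to0^+$: writing $\zeta=z+\varepsilon e^{i\theta}$ gives $d\zeta/(\zeta-z)=i\,d\theta$, so the circle integral equals $\int_0^{2\pi}g(z+\varepsilon e^{i\theta})\,i\,d\theta\to 2\pi i\,g(z)$ by continuity of $g$ at $z$; and since $|g_{\bar\zeta}|$ is bounded on $\overline{D_0}$ while $|\zeta-z|^{-1}$ is locally integrable in $\R^2$, dominated convergence gives $\int_{\Omega_\varepsilon}\to\int_{D_0}$. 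Hence
\[
\int_{D_0}\frac{g_{\bar\zeta}(\zeta)}{\zeta-z}\,d\bar\zeta\wedge d\zeta=\int_{bD_0}\frac{g(\zeta)}{\zeta-z}\,d\zeta-2\pi i\,g(z),
\]
and dividing by $2\pi i$ and recalling the definitions \eqref{E:CT1}, \eqref{E:solidCT2} yields $\bm{C}(g_{\bar\zeta})=\sC(g)-g$ on $D_0$, which is the assertion.

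I expect no serious obstacle; the only points requiring attention are bookkeeping ones: the orientation of the inner circle in Stokes' theorem, the factor $2\pi i$ produced by shrinking that circle, the absolute convergence of the solid integral despite the $|\zeta-z|^{-1}$ singularity, and the validity of Stokes' theorem on a merely piecewise-$C^1$ domain (handled by the corner remark above). Were one to want the identity for $z\in bD_0$ as well, slightly more care would be needed --- at a smooth boundary point only half of the small circle lies in $D_0$, producing $\pi i\,g(z)$ --- but for the uses of this lemma in the paper only $z\in D_0$ is required.
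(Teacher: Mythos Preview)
Your proof is correct and is precisely the standard Stokes-theorem argument for the Cauchy--Pompeiu formula. The paper itself does not give a proof of this lemma at all; it simply cites standard references (H\"ormander, Krantz, Chen--Shaw, Varolin) where exactly the argument you wrote can be found.
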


\begin{proof} This appears in standard texts, often called the generalized Cauchy Integral formula. See for example \cite{hormander_scv_book}, Theorem 1.2.1; \cite{Krantz_scv_book}, Corollary 1.1.5; \cite{ChenShaw01}, Theorem 2.1.1; or \cite{varolin_book}, Theorem 1.1.2 for a proof.
\end{proof}
Here (and previously in the paper), the implied meaning is an equation holds functionally  when variables are not expressly notated.
The relation ``$\text{id}-\sC=-\bm{C}$" in Lemma \ref{L:stokes} yields a solution operator for $\dbar$ in the smooth category:

\begin{corollary}\label{L:C1solution} Suppose $D_0\subset\C$ is a smoothly bounded domain and $f\in C^{\infty}(\overline{D_0})$. Define
$v(z)=-\bm{C}(f)(z)$.

Then $\frac{\partial v}{\partial\bar z}=f$ and $v\in C^{\infty}(\overline{D_0})$.
\end{corollary}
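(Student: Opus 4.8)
The statement has two parts. The regularity $v\in C^\infty(\overline{D_0})$ is immediate: it is Lemma \ref{L:smoothCT} applied to $h=f$, since $v=-\bm C(f)$. So the plan is to concentrate on the identity $\partial v/\partial\bar z=f$, equivalently $\partial\bm C(f)/\partial\bar z=-f$ on $D_0$. I would first reduce to compactly supported data: since $D_0$ is smoothly bounded, extend $f$ to some $\tilde f\in C^1_c(\C)$ with $\tilde f=f$ on $\overline{D_0}$, and for $z\in D_0$ split the full-plane solid Cauchy transform,
\[
\bm C_\C(\tilde f)(z):=\frac1{2\pi i}\int_\C\frac{\tilde f(\zeta)\,d\bar\zeta\wedge d\zeta}{\zeta-z}=\bm C(f)(z)+\frac1{2\pi i}\int_{\C\setminus D_0}\frac{\tilde f(\zeta)\,d\bar\zeta\wedge d\zeta}{\zeta-z}.
\]
The last integral is holomorphic in $z\in D_0$ (no singularity occurs and one may differentiate under the integral), so $\partial\bm C(f)/\partial\bar z=\partial\bm C_\C(\tilde f)/\partial\bar z$ on $D_0$, and it suffices to show $\partial\bm C_\C(\tilde f)/\partial\bar z=-\tilde f$.

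Next I would translate $\zeta=w+z$, which moves all the $z$-dependence into $\tilde f$ while the kernel becomes the fixed locally integrable function $1/w$:
\[
\bm C_\C(\tilde f)(z)=\frac1{2\pi i}\int_\C\frac{\tilde f(w+z)\,d\bar w\wedge dw}{w}.
\]
Now differentiation under the integral in $\bar z$ is legitimate and yields $\partial\bm C_\C(\tilde f)/\partial\bar z=\bm C_\C(\partial\tilde f/\partial\bar\zeta)$. Finally, pick a disc $B_R$ containing $\mathrm{supp}\,\tilde f$ in its interior with $\tilde f\equiv0$ near $bB_R$, and apply Lemma \ref{L:stokes} to $\tilde f$ on $B_R$: the boundary Cauchy transform of $\tilde f$ over $bB_R$ vanishes identically since $\tilde f|_{bB_R}=0$, so $0=\bm C_{B_R}(\partial\tilde f/\partial\bar\zeta)+\tilde f$, and since $\partial\tilde f/\partial\bar\zeta$ is supported in $B_R$ we may replace $\bm C_{B_R}$ by $\bm C_\C$. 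Hence $\bm C_\C(\partial\tilde f/\partial\bar\zeta)=-\tilde f$, and combining the displays gives $\partial v/\partial\bar z=f$ on $D_0$.

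The one delicate point is the differentiation under the integral sign past the singularity $1/(\zeta-z)$ of the kernel; it is handled exactly by the translation $\zeta\mapsto w+z$, after which the domain of integration no longer moves with $z$ and the kernel is a fixed $L^1_{\mathrm{loc}}$ function. Everything else is bookkeeping with Lemma \ref{L:stokes} and Lemma \ref{L:smoothCT}; in fact this computation is essentially the one already carried out inside the proof of Lemma \ref{L:smoothCT}, where $\bm C(h)+h^M$ was identified with a convolution of $h-\partial h^M/\partial\bar z$ against the fundamental solution of $\dbar$.
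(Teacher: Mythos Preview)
Your proof is correct and follows essentially the same strategy as the paper's: reduce to compactly supported data, observe the remainder is holomorphic, translate $\zeta\mapsto w+z$ to justify differentiation under the integral, and finish with Lemma~\ref{L:stokes}. The only difference is cosmetic: the paper localizes via an interior cutoff $\varphi\in C_0^\infty(D_0)$ with $\varphi\equiv 1$ near a fixed point $p$ (so $\varphi f$ is already compactly supported in $D_0$ and no extension theorem is needed), whereas you extend $f$ across $bD_0$ to a compactly supported $\tilde f$ and split the full-plane transform over $D_0$ and $\C\setminus D_0$; both decompositions produce the same holomorphic-remainder-plus-compactly-supported structure.
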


\begin{proof}
Lemma \ref{L:smoothCT} shows that $v\in C^\infty\left(\overline{D_0}\right)$. It remains to show $\frac{\partial v}{\partial\bar z}= f$, which can be done locally.

Let $p\in D_0$ be arbitrary. Choose $\varphi\in C^\infty_0(D_0)$ such that $\varphi\equiv 1$ in a neighborhood $V$ of $p$. Split $\bm{C}(f)$ as
\begin{align*}
v(z)&=-\frac 1{2\pi i}\left[\int_{D_0}\frac{\varphi f}{\zeta -z}\, d\bar\zeta\wedge d\zeta +\int_{D_0}\frac{(1-\varphi) f}{\zeta -z}\, d\bar\zeta\wedge d\zeta\right] \\
&= I_1(z) +I_2(z).
\end{align*}
Since $(1-\varphi)\equiv 0$ in $V$, differentiation under the integral for $I_2$ shows $I_2\in\co(V)$. Thus $\frac{\partial v}{\partial\bar z}=\frac{\partial I_1}{\partial\bar z}$ in $V$.

However, $\varphi f$ has compact support, so the integral defining $I_1$ can be viewed as an integral over $\C$. Changing variables and differentiating under the integral sign
yields
\begin{equation*}
\frac{\partial v}{\partial\bar z}=\frac 1{2\pi i}\int_{\C} \frac{\frac{\partial(\varphi f)}{\partial\bar\zeta}(z-\zeta)}{\zeta} \, d\bar\zeta\wedge d\zeta
\end{equation*}
for $z\in V$. Now reverse the change of variables and apply Lemma \ref{L:stokes}. The result is
\begin{align*}
\frac{\partial v}{\partial\bar z}&= -\frac 1{2\pi i}\int_{\C} \frac{\frac{\partial(\varphi f)}{\partial\bar\zeta}(\zeta)}{\zeta-z} \, d\bar\zeta\wedge d\zeta 
= -\frac 1{2\pi i}\int_{D_0} \frac{\frac{\partial(\varphi f)}{\partial\bar\zeta}(\zeta)}{\zeta-z} \, d\bar\zeta\wedge d\zeta \\ &= \varphi\cdot f
\end{align*}
for $z\in V$. Thus $\frac{\partial v}{\partial\bar z}= f$ near $p$.
\end{proof}

\subsection{$L^p$ mapping}
The basic $L^1$ result on the Cauchy transform is

\begin{lemma}
\label{Lp1dim}
Let $D_0\subset\C$ be a bounded domain. If $g\in L^1(D_0)$, the function
\[
G(z)=\frac{-1}{2 \pi i}\int_{D_0}\frac{g(\zeta)}{\zeta-z}\,d\bar \zeta\wedge d\zeta
\]
belongs to $L^1(D_0)$. Moreover, $\|G\|_{L^1(D_0)}\leq C\|g\|_{L^1(D_0)}$, for $C>0$ independent of $g$.
\end{lemma}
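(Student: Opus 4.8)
The plan is to reduce everything to Tonelli's theorem together with the elementary fact that $1/|w|$ is locally integrable on $\C\cong\R^2$. Writing $d\bar\zeta\wedge d\zeta = 2i\,dA(\zeta)$, where $dA$ denotes Lebesgue area measure, we have
\[
G(z) = -\frac{1}{\pi}\int_{D_0}\frac{g(\zeta)}{\zeta-z}\,dA(\zeta),
\]
and the pointwise bound $|G(z)| \le \frac{1}{\pi}\int_{D_0}\frac{|g(\zeta)|}{|\zeta-z|}\,dA(\zeta)$ holds whenever the right-hand side is finite. The function $(z,\zeta)\mapsto g(\zeta)/(\zeta-z)$ is measurable on $D_0\times D_0$, being a quotient of measurable functions whose denominator vanishes only on the diagonal, a set of planar measure zero; hence Tonelli's theorem applies to its modulus.

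The first step is the key estimate on the kernel. Since $D_0$ is bounded, fix $R>0$ with $D_0\subset\{|z|<R\}$; then for every $\zeta\in D_0$ one has $D_0\subset\{z:|z-\zeta|<2R\}$, and passing to polar coordinates centered at $\zeta$,
\[
\int_{D_0}\frac{dA(z)}{|\zeta-z|}\le\int_{\{|w|<2R\}}\frac{dA(w)}{|w|}=\int_0^{2R}\!\!\int_0^{2\pi}\frac{1}{\rho}\,\rho\,d\theta\,d\rho=4\pi R,
\]
a bound uniform in $\zeta\in D_0$. The only substantive point of the whole argument is precisely this integrability of $1/|w|$ near the origin in the plane: the singularity is absorbed by the Jacobian factor $\rho$, and this is what forces the resulting constant to depend only on the size of $D_0$.

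The second step is to interchange the order of integration. By Tonelli,
\[
\int_{D_0}\left(\int_{D_0}\frac{|g(\zeta)|}{|\zeta-z|}\,dA(\zeta)\right)dA(z)=\int_{D_0}|g(\zeta)|\left(\int_{D_0}\frac{dA(z)}{|\zeta-z|}\right)dA(\zeta)\le 4\pi R\,\|g\|_{L^1(D_0)}.
\]
In particular the inner integral $\int_{D_0}|g(\zeta)|/|\zeta-z|\,dA(\zeta)$ is finite for a.e.\ $z\in D_0$, so $G(z)$ is well defined almost everywhere and measurable, and combining with the pointwise bound above,
\[
\|G\|_{L^1(D_0)}\le\frac{1}{\pi}\int_{D_0}\int_{D_0}\frac{|g(\zeta)|}{|\zeta-z|}\,dA(\zeta)\,dA(z)\le 4R\,\|g\|_{L^1(D_0)}.
\]
Thus one may take $C=4R$, depending only on the diameter of $D_0$ and not on $g$. (Equivalently, after extending $g$ by zero, $G$ is a constant multiple of the convolution $g*(1/|\cdot|)$ restricted to $D_0$, and the bound is a special case of Young's inequality $\|g*k\|_{L^1}\le\|g\|_{L^1}\|k\|_{L^1}$ with $k$ a compactly supported truncation of $1/|w|$.) No difficulty beyond this routine computation is anticipated.
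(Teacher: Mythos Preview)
Your proof is correct and is essentially the same as the paper's. The paper phrases the argument as Young's convolution inequality applied to $\tilde g=g\chi_{D_0}$ and $h=\frac{1}{|\zeta|}\chi_{B(0,R)}$, while you unwind that inequality into a direct Tonelli computation with the uniform kernel bound $\int_{D_0}\frac{dA(z)}{|\zeta-z|}\le 4\pi R$; you even note the equivalence yourself in your final parenthetical remark.
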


\begin{proof}
For $S\subset\C$, let $\chi_S$ denote the characteristic function of $S$.

Set $\tilde g(\zeta)=g(\zeta)\chi_{D_0}(\zeta)$. Since $g\in L^1(D_0)$,  $\tilde g\in L^1(\C)$. Choose $R>\text{diam}(D_0)$.  If $B=B(0;R)$ is the disc centered at $0$ of radius $R$, let $h(\zeta)=\frac{1}{|\zeta|}\chi_{B}(\zeta)$.
Note $h\in L^1(\C)$.

By Young's inequality, $\tilde g * h\in L^1(\C)$. For any $z,\zeta\in D_0$, $|z-\zeta|\le\text{diam}(D_0)<R$, so $z-\zeta\in B$. Therefore,
\begin{align*}
\int_{D_0}\left|\int_{D_0}\frac{g(\zeta)\,dA(\zeta)}{\zeta-z}\right|\,dA(z)
&\le\int_{\C}\int_{\C}\frac{|\tilde g(\zeta)|\chi_B(\zeta-z)\,dA(\zeta)}{|\zeta-z|}\,dA(z)\\
&\le C\int_{D_0}|g(z)|\,dA(z).
\end{align*}
\end{proof}

A sharper result is

\begin{lemma}
\label{Linfty1dim}
Let $D_0\subset\C$ be a bounded domain. For $p\in[1,2)$ let $r=1$; for $p\in[2,\infty]$ let $r>2p/(p+2)$. 

If $g\in L^r(D_0)$, the function
\[
\left|\bm{C}\right| g (z)=: \frac{1}{2 \pi i}\int_{D_0}\frac{\left|g(\zeta)\right|}{|\zeta-z|}\,d\bar \zeta\wedge d\zeta
\]
belongs to $L^{p}(D_0)$.Moreover,  $\left\|\left|\bm{C}\right| g\right\|_{L^p(D_0)}\leq C\|g\|_{L^r(D_0)}$, for a constant $C>0$ independent of $g$.
\end{lemma}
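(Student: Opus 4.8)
The plan is to prove Lemma~\ref{Linfty1dim} by reducing it to the classical fact that convolution with a locally integrable, suitably decaying kernel is bounded between the appropriate $L^r$ and $L^p$ spaces. First I would reduce to a convolution estimate exactly as in the proof of Lemma~\ref{Lp1dim}: set $\tilde g = |g|\chi_{D_0}\in L^r(\C)$, pick $R>\mathrm{diam}(D_0)$, and let $K(\zeta) = \frac1{|\zeta|}\chi_{B(0;R)}(\zeta)$. For $z\in D_0$ one has $|\bm{C}|g(z) \le \frac1{2\pi}\,(\tilde g * K)(z)$, so it suffices to show $\|\tilde g * K\|_{L^p(\C)} \lesssim \|\tilde g\|_{L^r(\C)}$ and then restrict back to $D_0$.

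Next I would split into the two regimes. For $p\in[1,2)$ and $r=1$: the cleanest route is Young's inequality, $\|\tilde g * K\|_{L^p} \le \|\tilde g\|_{L^1}\,\|K\|_{L^p}$, and one checks $K\in L^p(\C)$ because $\int_{B(0;R)} |\zeta|^{-p}\,dA(\zeta) = 2\pi \int_0^R \rho^{1-p}\,d\rho < \infty$ precisely when $p<2$. For $p\in[2,\infty]$ and $r>2p/(p+2)$: here I would use the generalized Young inequality $\|\tilde g * K\|_{L^p} \le \|\tilde g\|_{L^r}\,\|K\|_{L^q}$ where $\tfrac1p + 1 = \tfrac1r + \tfrac1q$, i.e. $q$ is determined by $\tfrac1q = 1 - \tfrac1r + \tfrac1p$. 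The condition $K\in L^q(\C)$ again amounts to $q<2$, and a short computation shows $\tfrac1q < \tfrac12$ is equivalent to $\tfrac1r < \tfrac12 + \tfrac1p = \tfrac{p+2}{2p}$, i.e. $r > \tfrac{2p}{p+2}$, which is exactly the hypothesis. (When $p=\infty$ this degenerates to $r>2$, $q=r'<2$, and the bound is H\"older's inequality $|\bm{C}|g(z) \le \|g\|_{L^r(D_0)}\,\||\zeta-z|^{-1}\|_{L^{r'}(D_0)}$ with the last factor finite and bounded uniformly in $z\in D_0$.)

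Finally I would note that the constant $C$ depends only on $R$ (hence on $\mathrm{diam}(D_0)$) and the exponents, not on $g$, so the stated uniform bound follows, and restricting the $L^p(\C)$ estimate to $D_0$ gives $\||\bm{C}|g\|_{L^p(D_0)} \le C\|g\|_{L^r(D_0)}$. I do not anticipate a genuine obstacle here; the only point requiring care is bookkeeping the Young exponents in the range $p\in[2,\infty]$ and verifying that the borderline integrability condition $K\in L^q$ translates into precisely the stated lower bound on $r$. One should also remark, for the application in Theorem~\ref{T:Lp}, that since $D_0$ is bounded $L^p(D_0)\subset L^r(D_0)$ when $r\le p$, so the hypothesis ``$g\in L^r$'' is implied by ``$g\in L^p$'' there; but this is a side remark, not part of the lemma's proof.
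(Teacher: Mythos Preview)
Your proposal is correct and follows essentially the same route as the paper: reduce to a convolution on $\C$ with the truncated kernel $K=\chi_B/|\zeta|$ and invoke Young's inequality, checking that the kernel lies in the appropriate Lebesgue space exactly when the exponent is below $2$. The paper handles both regimes in a single line with the exponent relation $1/r+1/r'=1/p+1$ rather than splitting into cases, but the content is identical. One small slip: where you write ``$\tfrac1q<\tfrac12$'' you mean $\tfrac1q>\tfrac12$ (since $q<2$); the surrounding computation and conclusion $r>2p/(p+2)$ are nonetheless correct.
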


\begin{proof}
The argument follows the proof of Lemma \ref{Lp1dim}, using Young's convolution inequality (see \cite[Example 4, page 60]{SteinAnalysis4}).
Choose $R>\text{diam}(D_0)$, let $B$ be the disc centered at $0$ of radius $R$. Set  $\tilde g(\zeta)=g(\zeta)\chi_{D_0}(\zeta)$. Then

\begin{align*}
\left(\int_{D_0}\left(\int_{D_0}\frac{|g(\zeta)|\,dA(\zeta)}{|\zeta-z|}\right)^p\,dA(z)\right)^{1/p}
&\le \left\| |\tilde g|*\frac{\chi_B}{|\zeta|}\right\|_{L^p(\C)}\\
&\le \|\tilde g\|_{L^r(\C)}\cdot\|\chi_B/|\zeta|\|_{L^{r'}(\C)},
\end{align*}
where $1/r+1/r'=1/p+1$, by Young's inequality. However with $r$ chosen as in the hypothesis,  it must hold that $r'<2$, for any $p\in[1,\infty]$. Consequently $\|\chi_B/|\zeta|\|_{L^{r'}(\C)}=C<\infty$, completing the proof.
\end{proof}

\bibliographystyle{acm}
\bibliography{ChenMcNeal18}

\end{document}